\newtheorem{proposition}{Proposition}[section]
\newtheorem{theorem}[proposition]{Theorem}
\newtheorem{corollary}[proposition]{Corollary}
\newtheorem{lemma}[proposition]{Lemma}
\theoremstyle{definition}
\newtheorem{definition}[proposition]{Definition}
\newtheorem{remark}[proposition]{Remark}
\newtheorem{example}[proposition]{Example}
\numberwithin{equation}{section}
\newcommand{\R}{\mathbb R}
\def\Dx{\Delta_x}
\def\Cal{\mathcal}
\def\({\left(}
\def\){\right)}
\def\Nx{\nabla_x}
\def\Dt{\partial_t}
\def\dist{\operatorname{dist}}
\def\eb{\varepsilon}
\def\dist{\operatorname{dist}}
\def\Bbb{\mathbb}
\def\spann{\operatorname{span}}
\def\meas{\operatorname{meas}}
\def\divv{\operatorname{div}}
\begin{document}

\title[Strong Uniform Attractors]{Strong Uniform Attractors for Non-Autonomous Dissipative PDEs with non translation-compact external forces}
\author[S.Zelik]{Sergey Zelik}

\begin{abstract}We give a comprehensive study of strong uniform attractors of non-autonomous dissipative systems for the case where the external forces are not translation compact. We introduce several new classes of external forces which are not translation compact, but nevertheless allow to verify the attraction in a strong topology of the phase space and discuss in a more detailed way the class of so-called normal external forces introduced before. We also develop a unified approach to verify the asymptotic compactness for such systems based on the energy method and apply it to a number of equations of mathematical physics including the Navier-Stokes equations, damped wave equations and reaction-diffusing equations in unbounded domains.
\end{abstract}

\subjclass[2000]{35B40, 35B45}
\keywords{non-autonomous equations, uniform attractors, asymptotic compactness  }
\address{University of Surrey, Department of Mathematics, \newline
Guildford, GU2 7XH, United Kingdom.}

\email{s.zelik@surrey.ac.uk}

\maketitle
\tableofcontents

\section{Introduction}\label{s1}

It is well-known that in many casees the long-time behaviour of dissipative partial differential equations (PDEs) can be described in terms of the so-called {\it global attractors}. Being a compact invariant subset of a phase space attracting the images of all bounded sets when time tends to infinity, a global attractor contains all the non-trivial dynamics of the system considered. On the other hand, it is usually essentially smaller that the initial phase space. In particular, in many cases this attractor has finite Hausdorff and fractal dimension, so despite the infinite-dimensionality of the initial phase space (e.g., $\Phi=L^2(\Omega)$), the limit reduced dynamics on the attractor is in a sense finite-dimensional and can be described by finitely many parameters, see \cite{BV,Tem, CVbook} and references therein.
\par
The situation becomes  more complicated when the considered dissipative PDE is {\it non-autonomous}, for instance contains the external forces depending explicitly on time), e.g., when the underlying PDE has the form
\begin{equation}\label{0.PDE}
u_t=A(u)+g(t),\ u\big|_{t=\tau}=u_\tau\in W,
\end{equation}
where $A(u)$ is some non-linear operator which we will not specify  here, see the examples in Section \ref{s4} below, $W$ is a phase space of the problem considered (we will assume below that $W$ is a reflexive Banach space) and the external forces $g$ are assumed to belong to the space
$L^p_b(\R,V)$ with $1<p<\infty$, where $V$ is another reflexive Banach space, see Section \ref{s1} for more detailed exposition.
\par
At present time there exist two major ways to generalize the concept of a global attractor to the case of non-autonomous PDEs. The first one treats
the attractor of a non-autonomous system as a time-dependent set as well $\Cal A=\Cal A(t)\subset W$, $t\in\R$. This naturally leads to the so-called
{\it pullback} attractors or the kernel sections in the terminology of Vishik and Chepyzhov, see \cite{CLR,CVbook,CF} and references therein. One of the main advantages of this approach is the fact that the attractor $\Cal A(t)$ usually remains finite-dimensional for every $t$ and it is also well adapted to study {\it random/stochastic} PDEs, see \cite{CF}. However, in the case of deterministic PDEs, this approach has an essential drawback, namely, the attraction forward in time is usually lost and we have only a weaker form of the attraction property backward in time. As a result, an exponentially repelling forward in time trajectory  may be a pullback attractor, see \cite{MZ} and references therein. Mention also that this problem can be overcome using the concept of the so-called non-autonomous {\it exponential} attractor, see \cite{EMZ}.
\par
The alternative approach which is based on the reduction of the non-autonomous dynamical system (DS) to the autonomous one treats the attractor of a non-autonomous DS as a time-independent set $\Cal A\subset W$. This approach naturally leads to the so-called {\it uniform} attractor which is the main object of investigation of the present paper, so we explain it in a bit more detailed way. Following the general scheme, one should consider the  family of equations of the form \eqref{0.PDE}:
\begin{equation}\label{0.PDE-hull}
u_t=A(u)+h(t),\ u\big|_{t=\tau},\ h\in\Cal H(g)
\end{equation}
with all external forces belonging to the so-called hull $\Cal H(g)$ of the initial external force $g$ generated by all time shifts of the initial external force $g$ and their closure in the properly chosen topology, see Sections \ref{s2} and \ref{s3} for more details.
\par
Then, assuming that the problems \eqref{0.PDE-hull} are globally well-posed in $W$, we have a family of dynamical processes $U_h(t,\tau):W\to W$, $h\in\Cal H(g)$, in the phase space $W$ generated by the solution operators $U_h(t,\tau)u_\tau:=u(t)$ of \eqref{0.PDE-hull}. Introduce an extended phase space
$\Phi:=W\times\Cal H(g)$ associated with problem \eqref{0.PDE-hull}. Then, the extended semigroup on $\Phi$ is defined as follows:
\begin{equation}
\Bbb S(t)(u_0,h):=(U_h(t,0)u_0,T(t)h),\ u_0\in W,\ h\in\Cal H(g),\ (T(s)h)(t):=h(t+s).
\end{equation}
Finally, if this semigroup possesses a global attractor $\Bbb A\subset \Phi$, then its projection $\Cal A:=\Pi_1\Bbb A$ is called a {\it uniform} attractor associated with the family of equations \eqref{0.PDE-hull}, see \cite{Chep13,CVbook,CV95,CV94} as well as Section \ref{s3} for more details.
\par
Clearly, the choice of the topology on the extended phase space is crucial for this approach. According to Vishik and Chepyzhov, see \cite{CVbook}, there are two natural choices of this topology. First one is the  weak topology which leads to the so-called {\it weak} uniform attractor. In this case, we take the topology induced by the embedding $\Cal H(g)\subset L^p_{loc,w}(\R,V)$ on the hull $\Cal H(g)$ (this gives the compactness of the hull since the space $L^p_{loc}(\R,V)$ is reflexive and bounded sets are precompact in a weak topology) and also the weak topology on the phase space $W$. In this case we need not extra assumptions on the external forces $g$ and only the translation boundedness: $g\in L^p_b(\R,V)$ is usually sufficient to have a weak uniform attractor~$\Cal A$.
\par
The second natural choice is the choice of {\it strong} topologies on both components $W$ and $\Cal H(g)$. In this case, we need the
 extra assumption that the hull $\Cal H(g)$ is compact in a {\it strong} topology of $L^p_{loc}(\R,V)$ (the external forces satisfying this condition are usually referred as translation-compact). Thus, this alternative choice requires the extra assumption for the external forces to be translation-compact and gives the so-called {\it strong} uniform attractor $\Cal A\subset W$, see \cite{Chep13,CVbook} for many applications of this theory for various equations of mathematical physics.
\par
However, as has been pointed out later, there is one more a bit surprising choice of the topologies when one takes the {\it strong} topology on the $W$-component of the phase space $\Phi$ and the {\it weak} topology on the hull $\Cal H(g)$. Then, it is possible in many cases to verify the existence of a {\it strong} uniform attractor $\Cal A\subset W$ for the case when the external forces $g$ are not translation compact. Of course, to gain this strong compactness, we need some extra assumptions on $g$, but these assumptions can be essentially weaker than the translation compactness. The most known example here is the so-called {\it normal} external forces which usually give the strong compactness in the case of parabolic PDEs in bounded domains, see \cite{AnQ,LWZ,Lu} and  Sections \ref{s2} and \ref{s4} for more details. Mention also the paper \cite{MZS} where the weaker than normal class of external forces (which is close to the necessary one to have the strong attractor) is introduced, see also Section \ref{s1}.
\par
Note that, the usage of normal external forces is restricted to parabolic equations and for more complicated problems, e.g., damped wave equations this assumption is not sufficient to give the existence of a strong attractor, see counterexamples of Section \ref{s5}. Moreover, to the best of our knowledge only one reasonable class of non-translation compact external forces which give the existence of strong uniform attractors for non-parabolic problems (but with the conditions which is far from being necessary for the asymptotic compactness) has been introduced in the literature, see \cite{MCL} and \cite{MZh}.
\par
The main aim of the present paper is  to present some new classes of non-translation compact external forces which guarantee the existence of a strong attractor in many cases of equations of mathematical physics not restricted to parabolic problems only. Our approach is based on the well-known fact that the translation compact external forces are those which can be approximated (in $L^p_b(\R,V)$) by smooth functions in {\it space and time} (at least in the case when $V$ is a properly chosen Sobolev space). Following this idea, we introduce the class of {\it time regular} and {\it space regular} external forces which consist of functions which can be approximated in $L^p_b(\R,V)$ by the functions smooth {\it only} in time or {\it only} in space respectively, see Section \ref{s1} for the rigorous definition. Note that our class of space regular functions is very close (may be even equivalent) to the class introduced in \cite{MCL}. In contrast to this, our class of time regular functions seems to be indeed new.
 \par
 Then, on the one hand, as expected, the intersection of these two classes of external forces coincides with the translation compact external forces. On the other hand, in many cases, including e.g., damped wave
equations only time regularity or only space regularity is sufficient to gain the asymptotic compactness and prove the existence of a strong uniform attractor, see Section \ref{s4} for the details. Moreover, in order to treat normal external forces in a similar way, we introduce a slightly more restrictive class of {\it strongly} normal external forces consisting of functions which can be approximated in $L^p_b(\R,V)$ by functions belonging to $L^\infty(\R,V)$.
\par
Finally, we have developed a unified approach to check the asymptotic compactness for all classes of non translation compact external forces mentioned above based on the so-called energy method, see \cite{ball} and \cite{MoRosa}. To the best of our knowledge, the energy method has been not used before in the case when the external forces are not translation compact and essentially more complicated methods are used instead. We illustrate this approach on a number of equations of mathematical physics including the Navier-Stokes equations, damped wave equations and reaction-diffusion equations in unbounded domains, see Section \ref{s4} for the details.
\par
The paper is organized as follows.
\par
The new classes of external forces are introduced in Section \ref{s1}. We also study the relation between them here.
\par
 The properties of the weak hull $\Cal H(g)$ are studied in Section \ref{s2} in the case where the function $g$ belongs to one of the classes introduced in Section \ref{s1}. In this section, we introduce the main technical tools which are necessary in order to check the asymptotic compactness for our non translation compact external forces.
 \par
 In Section \ref{s3}, we remind briefly the main concepts and main theorems of the theory of uniform attractors.
 \par
 Section \ref{s4} contains our applications of the general theory to concrete classes of equations of mathematical physics.
 \par
 Finally, in Section \ref{s5}, we give several examples of natural classes of external forces which are however  {\it not sufficient} to have the asymptotic compactness and the existence of a strong attractor.

\section{Classes of admissible external forces}
In this section, we introduce a number of classes of external forces $g$ which will be used throughout of the paper and study the relations between them. We assume that we are given a {\it reflexive} Banach space $V$ and a function (external force) $g:\R\to V$ such that
\begin{equation}\label{1.lpv}
g\in L^p_b(\R,V), \ 1<p<\infty,
\end{equation}
where the uniformly local space $L^p_b(\R,V)$ is defined by the following norm:
\begin{equation}\label{1.ul}
\|g\|_{L^p_b(\R,V)}:=\sup_{s\in\R}\|u\|_{L^p((s,s+1),V)}<\infty.
\end{equation}
Functions belonging to the space $L^p_b(\R,V)$ are often refereed as translation bounded, see e.g. \cite{CVbook}.
\par
Unfortunately, the sole translation boundedness is not sufficient to gain the existence of a {\it strong} uniform attractor for the corresponding dissipative PDE, see e.g. Example \ref{Ex5.par} below, so some stronger conditions on $g$ should be posed. We start our exposition by considering the so-called {\it translation-compact} functions introduced by Vishik and Chepyzhov, see \cite{CV95,CVbook}, which is, in a sense, the strongest assumption to be made which guarantees the existence of a strong attractor for various classes of dissipative PDEs.

\subsection{Translation compact external forces} Remind that the space $L^p_b(\R,V)$ is invariant with respect to time translations:
\begin{equation}\label{1.tr}
(T(s)g)(t):=g(t+s),\ \ t,s\in\R,
\end{equation}
therefore, we may speak about the orbit of $g\in L^p_b(\R,V)$ under the translation group:
 \begin{equation}\label{1.orbit}
 \mathcal O(g):=\{T(s)g,s\in\R\}\subset L^p_b(\R,V).
 \end{equation}
 A function $g\in L^p_b(\R,V)$ is {\it translation-compact} if its orbit $\mathcal O(g)$ is pre-compact in the Frechet space $L^p_{loc}(\R,V)$.
\par
The next proposition gives the criterion for the function $g$ to be translation compact.

\begin{proposition}\label{Prop1.trcomp} A function $g\in L^p_b(\R,V)$ is translation compact if and only if
\par
1) $g$ has an $L^p$ modulus of continuity, i.e. there exists a monotone  function $\omega:\R_+\to\R_+$ such that $\lim_{z\to0}\omega(z)=0$ and
\begin{equation}\label{1.mc}
\sup_{t\in\R}\int_t^{t+1}\|g(s+\tau)-g(s)\|^p_V\,ds\le \omega(\tau).
\end{equation}
2) Let $g_h(t):=\frac1h\int_t^{t+h}g(s)\,ds$. Then the range of $g_h$ is precompact in $V$ for any positive $h$, i.e. the set
\begin{equation}\label{1.range}
\bigg\{g_h(t),\,t\in\R\bigg\}\subset V
\end{equation}
is pre-compact in $V$ for every fixed $h>0$.
\end{proposition}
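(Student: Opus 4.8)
The plan is to reduce the statement to a vector-valued compactness criterion on bounded time intervals and then transcribe the two hypotheses by a change of variables. Since $L^p_{loc}(\R,V)$ is metrizable, with metric generated by the seminorms $\|\cdot\|_{L^p((-n,n),V)}$, and since the orbit $\mathcal O(g)$ is automatically bounded in each $L^p((-n,n),V)$ by \eqref{1.ul}, a diagonal extraction shows that $g$ is translation compact if and only if, for every bounded interval $I\subset\R$, the restricted orbit $\{T(s)g\big|_I:s\in\R\}$ is precompact in $L^p(I,V)$. The workhorse of the whole argument is the mollification estimate: writing $g_h-g=\tfrac1h\int_0^h(\tau_\sigma g-g)\,d\sigma$ with $(\tau_\sigma g)(t):=g(t+\sigma)$, Minkowski's integral inequality together with condition~1) gives $\|g_h-g\|_{L^p_b(\R,V)}\le\omega(h)^{1/p}\to0$ as $h\to0$; thus the orbit of $g$ is approximated, uniformly over all shifts and in the $L^p_b$-norm, by the orbits of the smoother functions $g_h$.

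\emph{Sufficiency.} Assume 1) and 2) and fix $h>0$. A H\"older estimate on the difference of two averages yields $\|g_h(t)-g_h(t')\|_V\le C(h)\,|t-t'|^{1/p'}$ for $|t-t'|\le1$, with a constant independent of the base point; hence the family $\{T(s)g_h:s\in\R\}$ is equicontinuous in $C(I,V)$ for every compact interval $I$. Moreover $(T(s)g_h)(t)=g_h(t+s)$ lies in $\{g_h(r):r\in\R\}$, which is precompact in $V$ by 2), so the family is pointwise precompact as well. The vector-valued Arzel\`a--Ascoli theorem then gives that $\{T(s)g_h:s\in\R\}$ is precompact in $C(I,V)$, hence in $L^p(I,V)$. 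Finally, given any sequence $s_n$, a diagonal extraction over $h=1/k$ and $I=(-k,k)$ produces a subsequence along which $T(s_n)g_{1/k}$ converges in $L^p_{loc}(\R,V)$ for every $k$; combined with $\|T(s_n)g-T(s_n)g_{1/k}\|_{L^p_b}\le\omega(1/k)^{1/p}$ this forces $T(s_n)g$ to be Cauchy, hence convergent, in $L^p_{loc}(\R,V)$, so $\mathcal O(g)$ is precompact.

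\emph{Necessity.} Assume $\mathcal O(g)$ is precompact, with compact closure $\mathcal H(g)\subset L^p_{loc}(\R,V)$. For 1): for each fixed $f\in L^p_{loc}(\R,V)$ one has $T(\tau)f\to f$ in $L^p((0,1),V)$ as $\tau\to0$, and the operators $T(\tau)$ with $|\tau|\le1$ are equi-uniformly-continuous on $L^p_{loc}$ (indeed $\|T(\tau)f-T(\tau)f'\|_{L^p((0,1),V)}\le\|f-f'\|_{L^p((-1,2),V)}$); the standard ``pointwise-to-uniform on a compact set'' argument then upgrades this to $\sup_{f\in\mathcal H(g)}\|T(\tau)f-f\|_{L^p((0,1),V)}\to0$. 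Applying this to $f=T(s)g$ and taking the supremum over $s\in\R$ gives 1) with $\omega(z):=\sup_{0<\tau\le z}\sup_{s\in\R}\int_s^{s+1}\|g(t+\tau)-g(t)\|_V^p\,dt$, which is monotone and tends to $0$. For 2): the linear map $f\mapsto\frac1h\int_0^hf(t)\,dt$ is continuous from $L^p_{loc}(\R,V)$ to $V$ by H\"older's inequality, and it sends $\mathcal O(g)$ onto $\{g_h(r):r\in\R\}$ because $\frac1h\int_0^h(T(r)g)(t)\,dt=g_h(r)$; since a continuous image of a precompact set is precompact, 2) follows.

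\emph{Main obstacle.} No single step is deep; the point where care is genuinely required is the pervasive uniformity in the shift parameter $s$. Both the equicontinuity of $\{T(s)g_h:s\in\R\}$ and the uniform convergence $T(\tau)f\to f$ over $\mathcal H(g)$ must be arranged so that the resulting moduli are independent of $s$, for this is precisely what lets the conclusions be stated with $\sup_{t\in\R}$ as in \eqref{1.mc} and \eqref{1.range}; a careless treatment here is the most likely source of a gap. One should also verify that the diagonal reduction from $L^p_{loc}(\R,V)$ to the intervals $(-k,k)$ loses no information, i.e. that translation compactness is genuinely equivalent to precompactness of all restricted orbits. Alternatively, the entire argument can be compressed by invoking the vector-valued Kolmogorov--Riesz--Simon compactness criterion in $L^p(I,V)$ directly: its equicontinuity-of-translations hypothesis becomes 1), and its precompactness-of-averages hypothesis becomes 2), after the substitution $u=t+s$, with the supremum over $s$ accounting for the position of the interval.
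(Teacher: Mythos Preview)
Your proof is correct. The paper does not actually give a proof of this proposition; it simply says the result ``is based on the proper version of the Arzela-Ascoli theorem for $L^p$-spaces'' and cites \cite{CVbook}. Your argument is precisely such a proof: you use condition~1) to show that the orbit of $g$ is uniformly approximated by the orbits of the averages $g_h$, then apply the vector-valued Arzel\`a--Ascoli theorem to the equicontinuous, pointwise-precompact families $\{T(s)g_h\}$ using condition~2), and glue everything together by a diagonal extraction. Your necessity direction via ``pointwise-to-uniform on a compact'' and continuous image of a precompact set is the standard route. This is exactly the approach the paper alludes to, and your closing remark that the whole thing is equivalent to the vector-valued Kolmogorov--Riesz--Simon criterion is on point.
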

The proof of Proposition \ref{Prop1.trcomp} is based on the proper version of the Arzela-Ascoli theorem for $L^p$-spaces and is given, e.g. in \cite{CVbook}.
\par
Since we are mainly interested in this paper in the classes of external forces which are {\it not} translation compact, but nevertheless may lead to the existence of strong attractors, we finish our exposition of translation compact functions, see \cite{CVbook} for more details and more examples, and switch to more interesting from our point of view classes of functions.

\subsection{Space regular external forces} Roughly speaking, translation-compact external forces are those which are regular (can be approximated by smooth functions) in space and time. In this and next subsection, we introduce new classes of external forces which regular only in space or in time. As we will see below such partial regularity is sufficient for the asymptotic compactness for many interesting classes of dissipative PDEs.
\begin{definition} A function $g\in L^p_b(\R,V)$ is space regular if for every $\eb>0$ there exists a finite-dimensional subspace $V_\eb\subset V$, $\dim V_\eb<\infty$ and a function $g_\eb\in L^p_b(\R,V_\eb)$ such that
\begin{equation}\label{1.spr}
\|g-g_\eb\|_{L^p_b(\R,V)}\le \eb.
\end{equation}
In other words, a function $g\in L^p_b(\R,V)$ is space regular if it can be approximated by the functions with finite-dimensional range.
\end{definition}
The next proposition gives a typical example for space regular functions.
\begin{proposition}\label{Prop1.s-reg} Let the space $W\subset\subset V$ is compactly embedded in $V$ and let $g\in L^p_b(\R,W)$. Then the function $g$ is space regular as a function in $L^p_b(\R,V)$.
\end{proposition}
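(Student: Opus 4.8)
The plan is to approximate $g$ pointwise in time by rescaled elements of a finite net of the unit ball of $W$. Write $B_W$ for the closed unit ball of $W$, fix $C\ge1$ with $\|w\|_V\le C\|w\|_W$ for all $w\in W$ (so that in particular $g\in L^p_b(\R,V)$ and it is meaningful to ask whether $g$ is space regular), and set $R:=\|g\|_{L^p_b(\R,W)}$; if $R=0$ then $g\equiv0$ and the claim is trivial, so assume $R>0$. Fix $\eb>0$. Since $W\subset\subset V$, the set $B_W$ is precompact, hence totally bounded, in $V$, so it admits a finite $\eb/R$-net $\{e_1,\dots,e_N\}\subset B_W$: for every $w\in B_W$ there is an index $i$ with $\|w-e_i\|_V\le\eb/R$. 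I take $V_\eb:=\spann\{e_1,\dots,e_N\}\subset V$, which is finite-dimensional, as the approximating subspace.

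Next I would construct $g_\eb$. For almost every $t$ with $g(t)\ne0$ the normalised value $\widehat g(t):=g(t)/\|g(t)\|_W$ belongs to $B_W$; let $i(t)$ be the smallest index $i$ for which $\|\widehat g(t)-e_i\|_V\le\eb/R$, and put $g_\eb(t):=\|g(t)\|_W\,e_{i(t)}$, with $g_\eb(t):=0$ where $g(t)=0$. Since $g$ is strongly measurable as a $W$-valued map, the functions $t\mapsto\|g(t)\|_W$ and $t\mapsto\|\widehat g(t)-e_i\|_V$ are measurable, hence so is $i(\cdot)$, and therefore $g_\eb$ is a strongly measurable function with values in the finite-dimensional space $V_\eb$. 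The required estimates are then immediate: pointwise
\[
\|g(t)-g_\eb(t)\|_V=\|g(t)\|_W\,\|\widehat g(t)-e_{i(t)}\|_V\le\frac{\eb}{R}\,\|g(t)\|_W ,
\]
so for every $s\in\R$ one gets $\int_s^{s+1}\|g(t)-g_\eb(t)\|_V^p\,dt\le(\eb/R)^p\int_s^{s+1}\|g(t)\|_W^p\,dt\le\eb^p$, i.e. $\|g-g_\eb\|_{L^p_b(\R,V)}\le\eb$; and since $e_i\in B_W$ gives $\|e_i\|_V\le C$, we have $\|g_\eb(t)\|_V\le C\|g(t)\|_W$, whence $\|g_\eb\|_{L^p_b(\R,V_\eb)}\le CR<\infty$, so $g_\eb\in L^p_b(\R,V_\eb)$. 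As $\eb>0$ was arbitrary, this establishes space regularity of $g$.

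The only genuinely delicate point is ensuring that the choice of the approximating net element can be made in a \emph{measurable} way; this is precisely why I work with a fixed finite net and the ``first admissible index'' rule, rather than with a nearest point or with a bounded linear projection $P_\eb\colon V\to V_\eb$. Indeed, the projection route would only yield $\|g-P_\eb g\|_V\le\|I-P_\eb\|\,\dist_V(g,V_\eb)$, and the projection constant $\|I-P_\eb\|$ depends on $V_\eb$ and cannot be controlled in general without extra structural assumptions on $V$ (such as the approximation property). The homogeneity trick $g=\|g\|_W\,\widehat g$ also makes the whole argument automatically uniform in the interval parameter $s$ — every inequality above is already a bound uniform in $s$ — so no separate treatment of the ``large-norm'' set $\{t:\|g(t)\|_W>N\}$ is needed.
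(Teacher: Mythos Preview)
Your proof is correct and follows essentially the same route as the paper: take a finite $\eb$-net of the unit ball of $W$ in the $V$-norm, let $V_\eb$ be its span, and approximate $g(t)$ by rescaling a net element via the normalisation $g(t)=\|g(t)\|_W\,\widehat g(t)$. The only difference is that you are explicit about the measurability of the selection $t\mapsto e_{i(t)}$ via the ``first admissible index'' rule, a point the paper leaves implicit when it speaks of a (not necessarily linear) ``projector'' $P$ and sets $g_\eb(t):=P(g(t))$.
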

\begin{proof} Since $W$ is compactly embedded in $V$, the unit ball $B(0,1,W)$ of $W$ is precompact in $V$. Therefore, by the Hausdorff criterion, for every $\eb>0$, we may construct an $\eb$-net $\{x_1,\cdots, x_n\}$. Define
\begin{equation*}
V_\eb:=\spann\{x_1,\cdots,x_n\}.
\end{equation*}
We also may define a "projector" $P: B(0,1,W)\to V_\eb$ such that
\begin{equation*}
\|x-P(x)\|_V\le\eb,\ \ x\in B(0,1,W).
\end{equation*}
In a fact, this "projector" even can chosen to be linear, but this is not important for us. Extending this projector by scaling to any $R$-ball $B(0,R,W)$, we finally get
\begin{equation}\label{1.pr}
\|x-P(x)\|_V\le\eb\|x\|_{W},\ \ \forall x\in W.
\end{equation}
We now may define $g_\eb(t):=P(g(t))$. Then, $g_\eb\in L^p_b(\R,V_\eb)$ and
\begin{equation*}
\|g-g_\eb\|_{L^p_b(\R,V)}\le \eb\|g\|_{L^p_b(\R,W)}\le C\eb
\end{equation*}
and the proposition is proved.
\end{proof}
\begin{remark}\label{Rem1.smooth} In applications usually $V=L^p(\Omega)$ or some Sobolev space $V:=W^{s,p}(\Omega)$. In this case, by arbitrarily small perturbation of the finite-dimensional spaces $V_\eb$ involved in the definition of space regularity can be made smooth: $V_\eb\subset C^\infty(\Omega)$. Thus, without loss of generality, we may assume that the approximating functions $g_\eb$ are smooth in space. This simple observation is however crucial for proving asymptotic compactness for the case of space regular external forces.
\end{remark}
\subsection{Time regular external forces} In this subsection, we introduce the complementary class of functions which are regular in time, but in general, not in space.
\begin{definition}\label{Def1.t-reg} A function $g\in L^p_b(\R,V)$ is time regular if for any $\eb>0$ there exists a function $g_\eb\in C^k_b(\R,V)$ for all $k>0$, such that
\begin{equation*}
\|g-g_\eb\|_{L^p_b(\R,V)}\le\eb.
\end{equation*}
In other words, a function $g\in L^p_b(\R,V)$ is time regular if it can be approximated in $L^p_b(\R,V)$ by functions smooth in time.
\end{definition}
\begin{remark}\label{Rem1.hc} The assumption $g_\eb\in C^k_b(\R,V)$ is sometimes not convenient since the spaces $C^k$ are not reflexive and we do not have weak compactness of a unit ball. By this reason, instead of $C^k$, we will use the assumption $g_\eb\in H^k_b(\R,V)$, where $H^k$ is a Sobolev space of functions whose derivatives up to order $k$ belong to $L^2$. On the one hand, it does make essential difference due to the Sobolev
embedding $H^{k+1}\subset C^k$ (1D case!). On the other hand, the space $H^k$ is reflexive and we may use weak compactness of a unit ball, e.g., in order to prove that this norm preserve under various weak limits, see next section for more details.
\end{remark}
The next proposition gives an important characterization of time regular external forces.
\begin{proposition}\label{Prop1.mc} A function $g\in L^p_b(\R,V)$ is time regular if and only if it possesses a modulus of continuity in $L^p_b$, i.e. there exists a monotone decreasing function $\omega:\R_+\to\R_+$ such that $\lim_{z\to0}\omega(z)=0$ and
\begin{equation}\label{1.spr-mc}
\sup_{t\in\R}\int_{t}^{t+1}\|g(s+\tau)-g(s)\|_V^p\,ds\le\omega(\tau).
\end{equation}
\end{proposition}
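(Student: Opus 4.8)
The plan is to read condition~\eqref{1.spr-mc} as the statement that the translation orbit map $\tau\mapsto T(\tau)g$ is continuous from $\R$ into $L^p_b(\R,V)$ at $\tau=0$ (with quantitative control furnished by $\omega$), and to prove the two implications separately: necessity by a triangle-inequality comparison with a smooth approximant, sufficiency by mollifying $g$ in time.

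For the necessity I would fix $\eb>0$ and pick, by Definition~\ref{Def1.t-reg} (or by Remark~\ref{Rem1.hc} and the one-dimensional embedding $H^2\subset C^1$), an approximant $g_\eb$ which is Lipschitz in time, $\|g-g_\eb\|_{L^p_b(\R,V)}\le\eb$, with $L_\eb:=\|\partial_t g_\eb\|_{L^\infty(\R,V)}<\infty$. Using that the $L^p_b$-norm is translation invariant,
\begin{equation*}
\|T(\tau)g-g\|_{L^p_b}\le\|T(\tau)(g-g_\eb)\|_{L^p_b}+\|T(\tau)g_\eb-g_\eb\|_{L^p_b}+\|g_\eb-g\|_{L^p_b}\le 2\eb+L_\eb|\tau|,
\end{equation*}
whence $\limsup_{\tau\to0}\|T(\tau)g-g\|_{L^p_b}\le 2\eb$ and, $\eb$ being arbitrary, $\|T(\tau)g-g\|_{L^p_b}\to0$ as $\tau\to0$. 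One then sets $\omega(z):=\sup_{|\tau|\le z}\|T(\tau)g-g\|^p_{L^p_b}$, which is finite (bounded by $(2\|g\|_{L^p_b})^p$), monotone, vanishes at $z=0$, and by the very definition of the $L^p_b$-norm majorizes the left-hand side of~\eqref{1.spr-mc}.

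For the sufficiency, assume~\eqref{1.spr-mc} and let $\rho\in C_0^\infty(\R)$ with $\rho\ge0$, $\operatorname{supp}\rho\subset[-1,1]$, $\int_\R\rho=1$, and $\rho_\d(\tau):=\d^{-1}\rho(\tau/\d)$. I would take $g_\d(t):=\int_{-\d}^{\d}\rho_\d(\tau)g(t-\tau)\,d\tau$, a well-defined Bochner integral since $g\in L^p_b(\R,V)\subset L^1_{loc}(\R,V)$. Differentiating under the integral and applying Hölder's inequality gives $\|g_\d^{(k)}(t)\|_V\le\|\rho_\d^{(k)}\|_{L^{p'}(-\d,\d)}\|g\|_{L^p((t-\d,t+\d),V)}\le C_{k,\d}\|g\|_{L^p_b}$ uniformly in $t$, so $g_\d\in C^k_b(\R,V)$ for every $k$ (hence also $g_\d\in H^k_b(\R,V)$, cf.\ Remark~\ref{Rem1.hc}). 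Next, writing $g(t)-g_\d(t)=\int_{-\d}^{\d}\rho_\d(\tau)(g(t)-g(t-\tau))\,d\tau$ and using Jensen's inequality for the probability measure $\rho_\d(\tau)\,d\tau$ together with the convexity of $x\mapsto\|x\|_V^p$,
\begin{equation*}
\|g(t)-g_\d(t)\|_V^p\le\int_{-\d}^{\d}\rho_\d(\tau)\,\|g(t)-g(t-\tau)\|_V^p\,d\tau;
\end{equation*}
integrating over $t\in(s,s+1)$, applying Fubini (the integrand is non-negative) and the substitution $t\mapsto t+\tau$, then invoking~\eqref{1.spr-mc} and the monotonicity of $\omega$, yields $\int_s^{s+1}\|g-g_\d\|_V^p\,dt\le\int_{-\d}^{\d}\rho_\d(\tau)\,\omega(|\tau|)\,d\tau\le\omega(\d)$ uniformly in $s$, i.e.\ $\|g-g_\d\|_{L^p_b(\R,V)}^p\le\omega(\d)\to0$, proving time regularity.

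The argument is essentially soft, so I do not anticipate a serious obstacle; the points demanding some care are (i) the uniform-in-$t$ bounds on the derivatives of $g_\d$ that place it in $C^k_b$ (equivalently $H^k_b$), which is exactly where translation-boundedness of $g$ rather than mere local integrability is used, and (ii) the justification of Jensen's and Fubini's theorems for the $V$-valued integrals, which is routine once one records that $g\in L^1_{loc}(\R,V)$ and that $x\mapsto\|x\|_V^p$ is convex.
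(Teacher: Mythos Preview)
Your proof is correct and follows essentially the same route as the paper: necessity by comparing $g$ with a smooth-in-time approximant via the triangle inequality (the paper only says ``every such smooth in time function has modulus of continuity and the elementary arguments show that it preserves under the limit''), and sufficiency by mollifying $g$ in time and using the modulus of continuity to show $\|g-g_\delta\|_{L^p_b}\to0$. Your argument simply supplies the quantitative details (the Lipschitz bound for necessity, Jensen/Fubini for sufficiency) that the paper leaves implicit.
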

\begin{proof} Indeed, let $g\in L^p_b(\R,V)$ be time regular. Then, it can be approximated in $L^p_b(\R,V)$ by functions smooth in time. Every such smooth in time function has modulus of continuity and the elementary arguments show that it preserves under the limit.
\par
Vise versa, let the function $g\in L^p_b(\R,V)$ has a modulus of continuity. Then, we just construct the smooth approximations $g_\eb(t)$ to $g$ using the standard mollification operator in time:
\begin{equation*}
g_\eb(t):=\int_{R}\varphi_\eb(s)g(t-s)\,ds,
\end{equation*}
where the  mollification kernels satisfy $\phi_\eb(z):=\frac1\eb\varphi(z\eb^{-1})$, $\varphi\in C_0^\infty(0,1)$ and $\int_{R}\varphi(z)\,dz=1$. Then, $g_\eb(t)$ are smooth in time and as easy to show using the existence of modulus of continuity, that
\begin{equation*}
\|g-g_\eb\|_{L^p_b(\R,V)}\to0,\ \ \eb\to0.
\end{equation*}
This proves the proposition.
\end{proof}
The next theorem shows the relations between the introduced space regular and time regular functions and translation compactness.
\begin{theorem}\label{Th1.main} A function $g\in L^p_b(\R,V)$ is space regular and time regular if and only if it is translation compact.
\end{theorem}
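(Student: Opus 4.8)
The plan is to reduce everything to the criteria already at hand: Proposition~\ref{Prop1.trcomp} characterizes translation compactness by the two conditions (1) (existence of the $L^p$ modulus of continuity \eqref{1.mc}) and (2) (precompactness in $V$ of the range of the averages $g_h$), while Proposition~\ref{Prop1.mc} says that condition (1) is \emph{exactly} time regularity. Hence the theorem amounts to showing that, for a function $g$ that already has an $L^p_b$-modulus of continuity, space regularity is equivalent to condition (2). I would establish the two implications separately.

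\emph{Translation compact $\Rightarrow$ space regular and time regular.} Time regularity is immediate from (1) and Proposition~\ref{Prop1.mc}. For space regularity, fix $\eb>0$. Using \eqref{1.mc} together with Jensen's inequality one gets $\|g-g_h\|_{L^p_b(\R,V)}^p\le\frac1h\int_0^h\omega(\sigma)\,d\sigma\le\omega(h)$, so $g_h\to g$ in $L^p_b(\R,V)$ as $h\to0$; choose $h$ with $\|g-g_h\|_{L^p_b(\R,V)}\le\eb/2$. Now $g_h$ is continuous (as $g\in L^1_{loc}$) and, by (2), its range is contained in a compact set $R_h\subset V$. Pick a finite $(\eb/2)$-net $\{x_1,\dots,x_n\}$ of $R_h$, a continuous partition of unity $\{\phi_i\}$ on $R_h$ subordinate to the balls $B(x_i,\eb/2)$, put $V_\eb:=\spann\{x_1,\dots,x_n\}$ and $g_\eb(t):=\sum_i\phi_i(g_h(t))x_i$. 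Then $g_\eb\in L^p_b(\R,V_\eb)$ and $\|g_h(t)-g_\eb(t)\|_V\le\sum_i\phi_i(g_h(t))\|g_h(t)-x_i\|_V\le\eb/2$ for all $t$, so $\|g-g_\eb\|_{L^p_b(\R,V)}\le\eb$. (Instead of the partition of unity one may use a bounded linear projector onto $V_\eb$, as in the proof of Proposition~\ref{Prop1.s-reg}.)

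\emph{Space regular and time regular $\Rightarrow$ translation compact.} Condition (1) holds by Proposition~\ref{Prop1.mc}, so only (2) needs to be checked, for a fixed $h>0$. Given $\eb>0$, space regularity gives a finite-dimensional $V_\eb\subset V$ and $g_\eb\in L^p_b(\R,V_\eb)$ with $\|g-g_\eb\|_{L^p_b(\R,V)}\le\eb$. The averages $(g_\eb)_h(t)=\frac1h\int_t^{t+h}g_\eb(s)\,ds$ take values in the finite-dimensional space $V_\eb$ and are bounded there (since $\|g_\eb\|_{L^p_b(\R,V)}\le\|g\|_{L^p_b(\R,V)}+\eb$), hence $\{(g_\eb)_h(t):t\in\R\}$ is precompact in $V$. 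By Hölder's inequality $\sup_{t\in\R}\|g_h(t)-(g_\eb)_h(t)\|_V\le C_h\|g-g_\eb\|_{L^p_b(\R,V)}\le C_h\eb$ with $C_h=((h+1)/h)^{1/p}$. So $\{g_h(t):t\in\R\}$ lies within $C_h\eb$ of a precompact subset of $V$; since $\eb$ is arbitrary and $V$ is complete, this set is totally bounded, hence precompact, which is (2). Proposition~\ref{Prop1.trcomp} then yields translation compactness.

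The estimates above (averaging, Hölder, and the uniform-in-$t$ bookkeeping forced by the uniformly local norm $L^p_b$) are routine. The one step that deserves attention is, in the first implication, the passage from ``the range of $g_h$ is precompact in $V$'' to ``$g_h$ is approximated in $L^p_b(\R,V)$ by a genuinely measurable, finite-dimensional-valued function'': the partition-of-unity (or linear-projector) construction is exactly what guarantees that the approximant belongs to $L^p_b(\R,V_\eb)$ rather than being merely a formal pointwise projection.
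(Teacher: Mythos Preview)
Your proof is correct and follows essentially the same route as the paper's: both directions reduce to the two conditions of Proposition~\ref{Prop1.trcomp}, time regularity is identified with condition~(1) via Proposition~\ref{Prop1.mc}, and for condition~(2) one passes through the averages $g_h$ and exploits that a finite-dimensional-range approximation of $g$ yields, after averaging, a precompact set that uniformly approximates $\{g_h(t):t\in\R\}$. Your write-up is in fact a bit more explicit than the paper's (the Jensen estimate for $\|g-g_h\|_{L^p_b}$, the partition-of-unity construction, and the H\"older constant $C_h$), but the underlying argument is the same.
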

\begin{proof} The fact that translation compactness implies time regularity follows from Propositions \ref{Prop1.trcomp} and \ref{Prop1.mc}. Let us check that the translation compact function is space regular. Indeed, as not difficult to check, the translation compactness implies the convergence
\begin{equation*}
\|g-g_h\|_{L^p_b(\R,V)}\to 0,\ \ h\to0,
\end{equation*}
where the approximations $g_h$ are defined in Proposition \ref{Prop1.trcomp}. Moreover, due to the second part of Proposition \ref{Prop1.trcomp},
$f_h\in C_b(\R,K_h)$ for some {\it compact} set $K_h$ of $V$. Since a compact set is almost finite-dimensional,
arguing similarly to Proposition \ref{Prop1.s-reg}, we may find a finite dimensional space  $V_h$ such that $\dist(K,V_h)\le\eb$. Projecting the function $g_h$ to this finite-dimensional plane, we split it as follows: $g_h=g^1_h+r_h$, where $g^1_h\in C_b(\R,V_h)$ and
\begin{equation*}
\|r_h\|_{L^p_b(\R,V)}\le\eb.
\end{equation*}
Thus, the desired approximations of $g$ by functions with finite-dimensional range are constructed and $g$ is space  regular.
\par
Let us assume now that $g\in L^p_b(\R,V)$ is simultaneously space and time regular. We need to prove that $g$ is translation compact.
To do this, we will check the assumptions of the criterion of Proposition \ref{Prop1.trcomp}. Indeed, the first assumption is automatically satisfied since $g$ is time regular, see Proposition \ref{Prop1.mc}. Let us verify the second one. Indeed, since $g$ is space regular, we may approximate it by functions $g^n(t)$ of finite range $g^n\in L^p_b(\R,V_n)$, $\dim V_n<\infty$ and
\begin{equation*}
\|g-g^n\|_{L^p_b(\R,V)}\to 0,\ \ n\to\infty.
\end{equation*}
Introduce the functions $g^n_h(t):=\frac1h\int_t^{t+h}g^n(s)\,ds$. Then, obviously, for every fixed $h>0$, we have the convergence
\begin{equation*}
g^n_h\to g_h\ \text{ in }\ C_b(\R,V),\ \ n\to\infty.
\end{equation*}
Moreover, since $V_n$ is finite-dimensional, the set $\{g^n_h(t),\,t\in\R\}\subset V$ is precompact in $V$ for any $n$. Finally,
the set $\{g_h(t),\,t\in\R\}\subset V$ is precompact in $V$ as a limit of precompact sets. This gives the translation compactness of $g$ and finishes the proof of the theorem.
\end{proof}
\begin{remark}\label{Rem1.sp-time-compact} As we can see, two  introduced classes of external forces (space regular and time regular) intersect {\it exactly} by the class of translation compact functions.
\end{remark}

\subsection{Normal, weakly normal and strongly normal external forces} Normal external forces was the first big class of non translation-compact external forces introduced in \cite{LWZ} which gives the strong asymptotic compactness for some dissipative PDEs. Unfortunately, its application is restricted to parabolic equations only and it is useless e.g., for damped hyperbolic equations.
\begin{definition} A function $g\in L^p_b(\R,V)$ is normal if
\begin{equation}\label{1.norm-def}
\sup_{t\in\R}\int_t^{t+\tau}\|g(s)\|^p_V\, ds\to0
\end{equation}
as $\tau\to0$. A typical example of a normal function is $g\in L^{p+\eb}_b(\R,V)$ for some $\eb>0$, although more complicated examples are also available.
\end{definition}
The applications of normal external forces to parabolic PDEs are based on the following simple lemma.
\begin{lemma}\label{Lem1.normalexp} Let $g\in L^p_b(\R,V)$ be normal. Then
\begin{equation}\label{1.exp}
\sup_{t\in\R} \int_t^{t+1}e^{-N(t-s)}g(s)\,ds\to0
\end{equation}
as $N\to\infty$.
\end{lemma}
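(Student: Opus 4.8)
The plan is a two-mechanism split of the integral over $(t,t+1)$: a short subinterval adjacent to the endpoint where the exponential weight $e^{-N(s-t)}$ is largest (namely $s=t$), and the complementary long subinterval. On the short piece the exponential gives nothing and one must rely on the normality hypothesis \eqref{1.norm-def}; on the long piece normality is irrelevant and one simply uses that the exponential is already small together with translation boundedness. Throughout, $\to0$ is understood in $\|\cdot\|_V$, uniformly in $t$.

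Concretely, fix $\delta\in(0,1)$ and write $(t,t+1)=I_\delta\cup J_\delta$ with $I_\delta:=(t,t+\delta)$ and $J_\delta:=(t+\delta,t+1)$. On $I_\delta$ I would bound the weight by $1$ and apply H\"older's inequality in time with exponents $p$ and $p/(p-1)$:
\[
\Big\|\int_{I_\delta}e^{-N(s-t)}g(s)\,ds\Big\|_V\leq\int_t^{t+\delta}\|g(s)\|_V\,ds\leq\delta^{1-1/p}\,\omega(\delta)^{1/p},\qquad \omega(\delta):=\sup_{\tau\in\R}\int_\tau^{\tau+\delta}\|g(s)\|_V^p\,ds,
\]
and $\omega(\delta)\to0$ as $\delta\to0$ is \emph{precisely} the normality assumption; crucially this bound is uniform in both $t$ and $N$. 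On $J_\delta$ one has $e^{-N(s-t)}\leq e^{-N\delta}$, so a further H\"older estimate on the unit interval gives $\big\|\int_{J_\delta}e^{-N(s-t)}g(s)\,ds\big\|_V\leq e^{-N\delta}\|g\|_{L^p_b(\R,V)}$.

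Adding the two contributions yields $\sup_{t\in\R}\big\|\int_t^{t+1}e^{-N(s-t)}g(s)\,ds\big\|_V\leq\delta^{1-1/p}\omega(\delta)^{1/p}+e^{-N\delta}\|g\|_{L^p_b(\R,V)}$. Given $\e>0$, one first chooses $\delta$ small enough (using normality) that the first term is below $\e/2$, and only then, with that $\delta$ frozen, sends $N\to\infty$ so that the second term falls below $\e/2$. The only subtle point is exactly this order of quantifiers, $\delta$ before $N$; there is no genuine obstacle, since normality is tailor-made to control the concentration of the integral near the peak of the weight, which exponential decay alone cannot see, while translation boundedness handles everything else. (If the integral is instead intended to run over $(-\infty,t)$, the sole modification is that the part far from the peak is broken over the unit subintervals $(t-k-1,t-k)$, $k\geq0$, and summed as a geometric series of ratio $e^{-N}$, still $\to0$ for fixed $\delta$ as $N\to\infty$.)
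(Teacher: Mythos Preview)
Your argument is correct and is exactly the elementary $\delta$--$N$ splitting the paper has in mind when it writes ``The proof of the lemma is elementary and we leave it for the reader''; there is nothing further to compare. Note only that you have tacitly (and correctly, in view of the application \eqref{4.small} and the companion Lemma~\ref{Lem1.w-normalexp}) read the weight as $e^{-N(s-t)}$ rather than the literal $e^{-N(t-s)}$ printed in \eqref{1.exp}, which on $(t,t+1)$ would blow up.
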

The proof of the lemma is elementary and we leave it for the reader, see also~\cite{LWZ}.
\par
We now introduce one more class of external forces which are weaker than the normal ones and which somehow unify the classes of space regular, time regular and normal external forces. This class is very close (maybe even equivalent) to the one introduced in \cite{MZS}.
\begin{definition}\label{Def1.w-normal} Let $g\in L^p_b(\R,V)$ be a translation bounded external force. Then, the external force $g$ is weakly normal if for any $\eb>0$ there exist $\tau=\tau(\eb)>0$, a {\it finite-dimensional} space $V_\eb\subset V$ and a function $g_\eb\in L^p_b(\R,V_\eb)$ such that
\begin{equation}\label{1.weak-norm-def}
\sup_{t\in\R}\int_t^{t+\tau}\|g(s)-g_\eb(s)\|_{V}^p\,ds\le \eb.
\end{equation}
\end{definition}
The analogue of Lemma \ref{Lem1.normalexp} then reads.
\begin{lemma}\label{Lem1.w-normalexp} Let $g\in L^p_b(\R,V)$ be translation bounded and weakly normal. Then, there exists a monotone  function $\omega:\R_+\to\R_+$ such that $\lim_{z\to0}\omega(z)=0$ such that
\begin{equation}\label{1.wn-exp}
\limsup_{N\to\infty}\sup_{t\in\R}\int_{t-1}^te^{-N(t-s)}\|g(s)-g_\eb(s)\|^p_{V}\,ds\le \omega(\eb).
\end{equation}
\end{lemma}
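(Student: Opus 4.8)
The plan is to run the same two-step estimate that proves Lemma~\ref{Lem1.normalexp} (splitting the exponential integral into a near-$t$ piece where the integrand is small and a far piece where the exponential is small), but applied to the perturbation $g - g_\eb$ rather than to $g$ itself, and to keep careful track of the $\eb$-dependence so that the surviving error is a genuine modulus $\omega(\eb)$. First I would fix $\eb>0$ and let $\tau=\tau(\eb)>0$, $V_\eb$, $g_\eb$ be as supplied by Definition~\ref{Def1.w-normal}, so that $\sup_{t}\int_t^{t+\tau}\|g(s)-g_\eb(s)\|_V^p\,ds\le\eb$. Write $\psi(s):=\|g(s)-g_\eb(s)\|_V^p$, which lies in $L^1_b(\R)$ with $\|\psi\|_{L^1_b}\le C$ for a constant depending only on the translation bounds of $g$ and $g_\eb$ (hence, after choosing $g_\eb$, only on $\eb$ and the data — but this bound is uniform enough for our purposes since it is multiplied by a decaying exponential tail).

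Next I would estimate, for fixed $N$ and $t$,
\begin{equation*}
\int_{t-1}^t e^{-N(t-s)}\psi(s)\,ds = \int_{t-\tau}^t e^{-N(t-s)}\psi(s)\,ds + \int_{t-1}^{t-\tau} e^{-N(t-s)}\psi(s)\,ds.
\end{equation*}
For the first integral, bound $e^{-N(t-s)}\le 1$ to get $\int_{t-\tau}^t\psi(s)\,ds\le\eb$ by the weak normality inequality. For the second integral, on $[t-1,t-\tau]$ we have $t-s\ge\tau$, so $e^{-N(t-s)}\le e^{-N\tau}$; splitting $[t-1,t-\tau]$ into at most two unit intervals and using $\|\psi\|_{L^1_b}\le C$ gives the bound $2Ce^{-N\tau(\eb)}$, which tends to $0$ as $N\to\infty$ with $\eb$ (hence $\tau$) fixed. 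Therefore $\limsup_{N\to\infty}\sup_{t\in\R}\int_{t-1}^t e^{-N(t-s)}\psi(s)\,ds\le\eb$.

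Finally I would define $\omega(z):=z$ (or, to make it honestly monotone and to absorb harmless constants, $\omega(z):=Cz$ for a fixed constant, or simply take the infimum over admissible approximations), which satisfies $\omega(z)\to0$ as $z\to0$ and is monotone, and conclude that \eqref{1.wn-exp} holds. The only mildly delicate point — and the one I would flag as the main obstacle — is the logical order of quantifiers: the approximating function $g_\eb$ in \eqref{1.wn-exp} is the one attached to the given $\eb$, and the constant $C=\|\psi\|_{L^1_b}$ a priori depends on that same $g_\eb$; one must check that this does not corrupt the claim, which it does not because $C$ only multiplies a factor $e^{-N\tau(\eb)}$ that is killed in the limit $N\to\infty$ before $\eb$ is sent to $0$. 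With the quantifier bookkeeping handled, the estimate itself is entirely elementary, exactly as in Lemma~\ref{Lem1.normalexp}.
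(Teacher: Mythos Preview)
Your argument is correct and is exactly the elementary two-piece splitting that the paper has in mind; indeed, the paper does not give a proof at all but simply declares it ``straightforward'' and leaves it to the reader (citing \cite{MZS}). The only cosmetic point is that you implicitly assume $\tau(\eb)\le 1$ when writing the second integral, but since the weak-normality inequality for some $\tau$ trivially implies it for any smaller $\tau$, this is harmless.
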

The proof of this lemma is straightforward and we left it to the reader, see  \cite{MZS}.
\par
The next proposition shows that all of the classes of external forces introduced before are contained in the class of weakly normal external forces.
\begin{proposition}\label{Prop1.evident} Let the external force $g\in L^2_b(\R,V)$ be time regular or space regular or normal. Then $g$ is weakly normal.
\end{proposition}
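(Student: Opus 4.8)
The plan is to verify the weak normality condition \eqref{1.weak-norm-def} separately in each of the three cases, since the definition of weak normality is essentially a common generalization of all of them.

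\emph{Space regular case.} This is the easiest: if $g$ is space regular, then for every $\eb>0$ there is a finite-dimensional $V_\eb\subset V$ and $g_\eb\in L^p_b(\R,V_\eb)$ with $\|g-g_\eb\|_{L^p_b(\R,V)}\le\eb^{1/p}$. Then for \emph{any} $\tau\le 1$ we have
\begin{equation*}
\sup_{t\in\R}\int_t^{t+\tau}\|g(s)-g_\eb(s)\|_V^p\,ds\le \sup_{t\in\R}\int_t^{t+1}\|g(s)-g_\eb(s)\|_V^p\,ds\le\eb,
\end{equation*}
so \eqref{1.weak-norm-def} holds with $\tau(\eb)=1$; in fact no smallness of $\tau$ is needed here.

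\emph{Normal case.} If $g$ is normal, take $g_\eb\equiv 0$ (and $V_\eb=\{0\}$). Then \eqref{1.weak-norm-def} becomes exactly $\sup_{t\in\R}\int_t^{t+\tau}\|g(s)\|_V^p\,ds\le\eb$, which holds for $\tau$ small by the defining property \eqref{1.norm-def} of a normal function. So here the finite-dimensional approximation is trivial and it is the smallness of $\tau$ that does the work — this is exactly the opposite balance from the space regular case, which is the point of the weakly normal definition.

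\emph{Time regular case.} This is the only case requiring a genuine argument, and I expect it to be the main (though still modest) obstacle. If $g$ is time regular then by Proposition \ref{Prop1.mc} it has a modulus of continuity $\omega$ in $L^p_b$, i.e. \eqref{1.spr-mc} holds. Given $\eb>0$, I would first pick $h>0$ small and consider the time-average $g_h(t)=\frac1h\int_t^{t+h}g(s)\,ds$. Using Jensen's inequality together with \eqref{1.spr-mc}, one gets $\|g-g_h\|_{L^p_b(\R,V)}^p\le\omega(h)$, which is small for $h$ small. However $g_h$ need not have finite-dimensional range, so this alone does not give weak normality — one still has to combine it with a finite-dimensional truncation. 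The cleanest route is: the time regular function $g$ can be approximated in $L^p_b(\R,V)$ by $\tilde g\in H^k_b(\R,V)\subset C_b(\R,V)$ (Remark \ref{Rem1.hc}), hence $\tilde g$ is \emph{bounded and continuous} into $V$; but that still does not make its range finite-dimensional. The correct observation is that weak normality only asks for \eqref{1.weak-norm-def} on a short interval $[t,t+\tau]$, so one may combine a \emph{time} truncation with the smallness of $\tau$: choose $\tilde g$ smooth in time with $\|g-\tilde g\|_{L^p_b}$ small, then on each unit window $[t,t+1]$ the continuity of $\tilde g$ together with the short length $\tau$ controls $\int_t^{t+\tau}\|\tilde g(s)-\tilde g(t)\|_V^p\,ds$ — but $\tilde g(t)$ ranges over all of $V$. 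So in fact the honest argument is: time regular functions \emph{are} weakly normal because they are normal modulo a space-regular (indeed smooth-in-space after small perturbation) piece — but that circularity must be avoided. I therefore expect the author's actual proof to argue directly that a modulus of continuity in $L^p_b$, combined with taking $g_\eb=g_h$ and then absorbing the (precompact-range, hence almost finite-dimensional) set $\{g_h(t)\}$ into a finite-dimensional space exactly as in the proof of Theorem \ref{Th1.main}, yields \eqref{1.weak-norm-def} with $\tau=1$. Concretely: write $g=g_h+(g-g_h)$ with $\|g-g_h\|_{L^p_b}^p\le\omega(h)\le\eb/2$; the range of $g_h$ lies in a compact (or at least bounded) subset of $V$ — here one uses that $g$ is time regular so $g_h$ inherits the same; approximate that set by a finite-dimensional $V_\eb$ up to $(\eb/2)^{1/p}$ and project $g_h$ there to get $g_\eb\in L^p_b(\R,V_\eb)$; then $\|g-g_\eb\|_{L^p_b(\R,V)}^p\le\eb$, which gives \eqref{1.weak-norm-def} with $\tau(\eb)=1$. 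The subtle point to get right is why $\{g_h(t),t\in\R\}$ is precompact in $V$ for a merely time regular $g$ — and in fact it need \emph{not} be, so the projection step must instead use a one-dimensional-per-coordinate argument or simply the fact that it suffices to approximate $g_h$ in $L^p_b(\R,V)$ (not uniformly in $t$ with precompact range), which already follows from approximating $g_h(t)$ measurably by simple functions with finitely many values. That measurable finite-range approximation of the $V$-valued function $g_h$, valid since $V$ is separable on the essential range, is the step I would single out as the one needing care.
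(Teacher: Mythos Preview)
Your treatment of the space regular and normal cases is correct and matches the intended argument (the paper itself merely says ``immediate from the definitions'' and leaves the details to the reader).

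The time regular case, however, contains a genuine gap. You correctly identify that the range of $g_h$ (or of a $C_b$-approximant $\tilde g$) need not be precompact in $V$, but your proposed workaround --- approximating $g_h$ in $L^p_b(\R,V)$ by simple functions with finitely many values --- fails for exactly the same reason. Simple-function approximation of a Bochner-measurable function is available on any \emph{fixed} interval, but not uniformly over all unit windows, which is what the $L^p_b$-norm requires. A concrete obstruction: take $V=\ell^2$ and $\tilde g(t)=e_n$ for $t\in[n,n+1)$; this is in $L^\infty(\R,V)$ but for any finite-dimensional $V_\eb$ one has $\dist(e_n,V_\eb)\to 1$, so no $g_\eb\in L^p_b(\R,V_\eb)$ can approximate $\tilde g$ in $L^p_b$. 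In short, if your argument worked it would prove that \emph{every} translation-bounded function is space regular.

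The point you are missing is that the finite-dimensional space in Definition~\ref{Def1.w-normal} may be taken to be $V_\eb=\{0\}$ (with $g_\eb\equiv 0$), exactly as you did in the normal case. If $g$ is time regular, pick $\tilde g\in C_b(\R,V)$ with $\|g-\tilde g\|_{L^p_b(\R,V)}$ small; then $\tilde g\in L^\infty(\R,V)$, so
\[
\int_t^{t+\tau}\|g(s)\|_V^p\,ds\le 2^{p-1}\!\int_t^{t+\tau}\|g(s)-\tilde g(s)\|_V^p\,ds+2^{p-1}\tau\|\tilde g\|_{L^\infty(\R,V)}^p,
\]
and both terms are small for $\tau$ small. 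In other words, time regular $\Rightarrow$ normal $\Rightarrow$ weakly normal with the trivial choice $g_\eb=0$. This is the ``immediate'' argument the paper has in mind (and is essentially the content of the later Proposition~\ref{Prop1.snorm}(3)$\Rightarrow$(1)).
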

All assertions of this propositions are immediate corollaries of the corresponding definitions, so we again left the rigorous proof to the reader.
 \par
   As we have seen, the definition of normal external forces has a different structure than the definition of space or time regular external forces given before and actually we do not know how to define normal external forces using approximations. Instead, we introduced slightly stronger class of external forces which contain normal ones and which can be described on the level of approximations.
\begin{definition}\label{Def1.snorm} A function $g\in L^p_b(\R,V)$ is strongly normal if for any $\eb>0$ there exists a function $g_\eb\in L^\infty(\R,V)$ such that
\begin{equation*}
\|g-g_\eb\|_{L^p_b(\R,V)}\le\eb.
\end{equation*}
\end{definition}
The next proposition clarifies the relations between the introduced concepts.
\begin{proposition}\label{Prop1.snorm}\par 1) Let $g\in L^p_b(\R,V)$ is strongly normal. Then it is normal.
\par
2) Let $g\in L^{p+\eb}_b(\R,V)$ for some $\eb>0$. Then it is strongly normal in $L^p_b(\R,V)$.
\par
3) Let $g\in L^p_b(\R,V)$ is time regular. Then it is strongly normal.
\end{proposition}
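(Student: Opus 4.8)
The plan is to verify the three assertions in order, each being essentially a direct manipulation of the definitions of the relevant classes.

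For part 1), I would start from the definition of strong normality: given $\eb>0$ pick $g_\eb\in L^\infty(\R,V)$ with $\|g-g_\eb\|_{L^p_b(\R,V)}\le\eb$. Then for any $\tau>0$ split
\begin{equation*}
\(\int_t^{t+\tau}\|g(s)\|^p_V\,ds\)^{1/p}\le\(\int_t^{t+\tau}\|g(s)-g_\eb(s)\|^p_V\,ds\)^{1/p}+\(\int_t^{t+\tau}\|g_\eb(s)\|^p_V\,ds\)^{1/p}.
\end{equation*}
The first term is at most $\|g-g_\eb\|_{L^p_b(\R,V)}\le\eb$ uniformly in $t$ and $\tau\le1$, while the second is bounded by $\tau^{1/p}\|g_\eb\|_{L^\infty(\R,V)}$, which tends to $0$ as $\tau\to0$ for the now-fixed $g_\eb$. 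Taking $\limsup_{\tau\to0}$ of the supremum over $t$ and then letting $\eb\to0$ gives the normality condition \eqref{1.norm-def}.

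For part 2), the claim is that $L^{p+\eb}_b\subset L^p_b$ functions are strongly normal. Here I would truncate: set $g_N(s):=g(s)$ when $\|g(s)\|_V\le N$ and $g_N(s):=0$ otherwise, so $g_N\in L^\infty(\R,V)$ with $\|g_N\|_{L^\infty}\le N$. On the bad set $\{\|g(s)\|_V>N\}$ we have $\|g(s)\|_V^p\le N^{-\eb}\|g(s)\|_V^{p+\eb}$, hence
\begin{equation*}
\int_t^{t+1}\|g(s)-g_N(s)\|^p_V\,ds=\int_{\{s\in(t,t+1):\|g(s)\|_V>N\}}\|g(s)\|^p_V\,ds\le N^{-\eb}\|g\|^{p+\eb}_{L^{p+\eb}_b(\R,V)},
\end{equation*}
uniformly in $t$, which is $\le\eb^p$ once $N$ is large enough; this is exactly strong normality. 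Part 3) is immediate from the definitions: if $g$ is time regular, the approximants $g_\eb\in H^k_b(\R,V)\subset C_b(\R,V)$ (Remark \ref{Rem1.hc}, Sobolev embedding in 1D) in particular lie in $L^\infty(\R,V)$, and they already satisfy $\|g-g_\eb\|_{L^p_b(\R,V)}\le\eb$, so the defining condition of strong normality holds verbatim.

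None of the three steps presents a real obstacle; the only point requiring minor care is the uniformity in $t$ of the estimates in parts 1) and 2), which is handled because all the relevant norms are uniformly-local ($L^p_b$, $L^{p+\eb}_b$, $L^\infty$) and the splitting/truncation is performed pointwise in $s$. If anything, the mildly delicate bookkeeping is in part 1), where one should be careful to first fix $g_\eb$ and only afterwards send $\tau\to0$, so that the term $\tau^{1/p}\|g_\eb\|_{L^\infty}$ genuinely vanishes.
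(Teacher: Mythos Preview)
Your proposal is correct and, for parts 1) and 3), proceeds exactly as the paper does (the paper just phrases part 1) more abstractly as ``every $g_n\in L^\infty$ is normal and normality passes to the $L^p_b$-limit'', which is precisely your $\eb$-splitting made implicit).

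For part 2) you take a genuinely different and somewhat cleaner route. The paper uses the same truncation $g_N$ but then estimates the difference via H\"older followed by Chebyshev on the measure of $\{\|g\|_V>N\}$, arriving at a bound of the form $C\|g\|^{1-\theta}_{L^1_b}N^{\theta-1}$ for some $0<\theta<1$ depending on $p$ and $\eb$. Your argument bypasses both inequalities with the single pointwise observation $\|g(s)\|_V^p\le N^{-\eb}\|g(s)\|_V^{p+\eb}$ on the bad set, yielding directly the bound $N^{-\eb}\|g\|^{p+\eb}_{L^{p+\eb}_b}$. This is more elementary and gives an explicit decay rate; the paper's route, while slightly heavier, has the minor advantage that it makes transparent how the smallness of the bad set (via Chebyshev) is what drives the estimate, which is a viewpoint that generalises more readily.
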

\begin{proof} Let $g$ be strongly normal in $L^p_b(\R,V)$. Then it can be approximated in this space by functions $g_n\in L^\infty(\R,V)$. Since every $g_n$ is obviously normal in $L^p_b(\R,V)$ and this property preserves under passing to the limit (as not difficult to see by elementary reasons) the limit function $g$ is also normal.
\par
Let now $g\in L^{p+\eb}(\R,V)$ for some $\eb>0$. We need to approximate it by function from $L^\infty(\R,V)$. To this end, we introduce a sequence of functions
\begin{equation}
g_N(t):=\begin{cases} g(t),\ \ \|g(t)\|_V\le N,\\ 0,\ \ \|g(t)\|_V>N.\end{cases}
\end{equation}
Then, clearly $g_N\in L^\infty(\R,V)$. Let us now estimate the difference $g-g_N$ using the H\"older and Chebyshev inequalities:
\begin{multline}
\int_t^{t+1}\|g-g_N\|^p_V\,ds=\int_{(t,t+1)\cap\{\|g\|_V>N\}}\|g(s)\|^p_V\,ds\le\\\le \|g\|_{L^{p+\eb}_b(\R,V)}^\theta\(\meas\{(t,t+1)\cap\{\|g(t)\|_V>N\}\}\)^{1-\theta}\le C\|g\|_{L^1(t,t+1,V)}^{1-\theta}N^{\theta-1}.
\end{multline}
for some $0<\theta<1$ depending only on $p$ and $\eb$. Thus, indeed, the difference $g-g_N$ tends to zero in $L^p_b(\R,V)$ and $g$ is strongly normal in $L^p_b(\R,V)$.
\par
The third assertion is obvious since a time regular function can be approximated by functions smooth in time, in particular, these approximations belong to $L^\infty(\R,V)$.
\end{proof}

\section{Weak hulls for translation bounded external forces}\label{s2}
In this section, we prepare a number of technical tools which will allow us to check asymptotic compactness in the case where the external forces belong to the classes introduced in the previous section. We start with introducing weak hull of a translation bounded function $g\in L^p_b(\R,V)$.
In contrast to the translation compact case, now the orbit $\mathcal O(g)$, see \eqref{1.orbit}, is not precompact in a strong topology of $L^p_{loc}(\R,V)$. However, since the space $L^p_{loc}(\R,V)$ is {\it reflexive}, see e.g. \cite{RR} (we remind that thought of the paper the space $V$ is reflexive and $1<p<\infty$), the orbit $\mathcal O(g)$ is precompact in a {\it weak} topology of $L^p_{loc}(\R,V)$. This space endowed by the weak topology, we will denote by $L^p_{loc,w}(\R,V)$. Remind that a sequence $v_n\to v$ in $L^p_{loc,w}(\R,V)$ if for every time interval $[-N,N]$, the sequence $v_n\big|_{t\in[-N,N]}$ is convergent weakly in $L^p((-N,N),V)$ to the function $v\big|_{t\in[-N,N]}$. This weak compactness justifies the following definition.
\begin{definition}\label{Def2.hull} Let $g\in L^p_b(\R,V)$ be translation bounded. Then the weak hull $\mathcal H(g)$ is defined as a closure of the orbit $\mathcal O(g)$ in the space $L^p_{loc,w}(\R,V)$:
\begin{equation}\label{2.hull}
\mathcal H(g):=\bigg[\mathcal O(g)\bigg]_{L^p_{loc,w}(\R,V)}.
\end{equation}
Clearly, the group of translations acts of the hull:
\begin{equation}\label{2.hulltr}
T(s):\mathcal H(g)\to\mathcal H(g),\ s\in\R,\ \  T(s)\mathcal H(g)=\mathcal H(g).
\end{equation}
This simple property is important for the reduction of a non-autonomous process to a semigroup, see the next section.
\end{definition}
The next proposition shows that the weak hull does not destroy the space or time regularity as well as normality introduced in the previous section.
\begin{proposition}\label{Prop2.hull} Let $g\in L^p_b(\R,V)$ be translation bounded. Then \par
1) All functions $h\in\mathcal H(g)$ are translation bounded and
\begin{equation}
\|h\|_{L^p_b(\R,V)}\le \|g\|_{L^p_b(\R,V)}
\end{equation}
for any $h\in\mathcal H(g)$.
\par
2) Let $g$ be time regular. Then all $h\in\mathcal H(g)$ are time regular. Moreover, the modulus of continuity of the external force $g$ is simultaneously a modulus of continuity for all external forces $h\in\mathcal H(g)$.
\par
3) Let $g$ be space regular. Then all $h\in\mathcal H(g)$ are space regular as well.
\par
4) Let $g$ be normal. Then all $h\in\mathcal H(g)$ are normal as well. Moreover,
\begin{equation}\label{2.hull-norm}
\sup_{h\in\mathcal H(g)}\sup_{t\in\R}\int_t^{t+\tau}\|h(s)\|^p\,ds\to 0,\ \ \text{ as }\ \ \tau\to0.
\end{equation}
\par
5) Let $g$ be strongly normal. Then all $h\in\mathcal H(g)$ are strongly normal as well.
\par
6) Let $g$ be weakly normal. Then, for every $h\in\Cal H(g)$ and every $\eb>0$ there exists $h_\eb\in \Cal H(g_\eb)\subset L^p_b(\R,V_\eb)$ such that
inequalities \eqref{1.weak-norm-def} and \eqref{1.wn-exp} hold uniformly with respect to $h\in\Cal H(g)$.
\end{proposition}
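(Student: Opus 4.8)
The plan is to exploit three structural features of the weak hull: $\Cal H(g)$ is the \emph{closure} of the orbit $\Cal O(g)$ in the weak topology of $L^p_{loc}(\R,V)$; that weak topology is locally convex, so closed convex sets are weakly closed; and $\Cal H(g)$, being the weak closure of a set bounded in $L^p_b(\R,V)$, is weakly \emph{compact}. I shall use repeatedly that for every bounded interval $I$ the functional $f\mapsto\int_I\|f(s)\|_V^p\,ds$ on $L^p(I,V)$ is convex and strongly continuous, hence weakly lower semicontinuous, and that it remains so after precomposition with a linear translation operator $f\mapsto f(\cdot+\tau)-f(\cdot)$.

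For parts 1, 2 and 4 it suffices to write $\Cal H(g)\subseteq S$ for a suitable weakly closed set $S\supseteq\Cal O(g)$. For part 1, take $S=\{h:\|h\|_{L^p_b}\le\|g\|_{L^p_b}\}$, the intersection over $t\in\R$ of the preimages of closed $L^p((t,t+1),V)$-balls under the continuous restriction maps, hence weakly closed; every $T(s)g$ lies in it. For part 2, recall that by Proposition \ref{Prop1.mc} time regularity of $g$ means that $g$ satisfies \eqref{1.spr-mc} with some modulus $\omega$; by translation invariance so does every $T(s)g$, and the set of $h$ obeying \eqref{1.spr-mc} with a fixed $\omega$ is an intersection over $t$ and $\tau$ of sublevel sets of weakly lower semicontinuous functionals, hence weakly closed — so every $h\in\Cal H(g)$ satisfies \eqref{1.spr-mc} with the \emph{same} $\omega$ and is time regular, again by Proposition \ref{Prop1.mc}. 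For part 4, with $\mu(\tau):=\sup_t\int_t^{t+\tau}\|g(s)\|_V^p\,ds\to0$ (normality of $g$) and $S=\{h:\sup_t\int_t^{t+\tau}\|h(s)\|_V^p\,ds\le\mu(\tau)\ \forall\tau>0\}$, which is weakly closed, one gets $\Cal H(g)\subseteq S$, which is precisely the uniform bound \eqref{2.hull-norm}; in particular each $h\in\Cal H(g)$ is normal.

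For parts 3, 5 and 6 the common device is a ``joint hull''. Fix $\eb>0$ and let $g_\eb$ be the approximation supplied by the relevant hypothesis: $g_\eb\in L^p_b(\R,V_\eb)$ with $\dim V_\eb<\infty$ (space regular), or $g_\eb\in L^\infty(\R,V)$ (strongly normal), or $g_\eb\in L^p_b(\R,V_\eb)$ with $\dim V_\eb<\infty$ and $\tau=\tau(\eb)$ as in Definition \ref{Def1.w-normal} (weakly normal). First, $\Cal H(g_\eb)$ is contained in the same small space as $g_\eb$ — $L^p_b(\R,V_\eb)$, resp. $L^\infty(\R,V)$ — because $L^p_{loc}(\R,V_\eb)$ is a closed subspace and the $L^\infty$-ball a closed convex set, hence both weakly closed, and both contain $\Cal O(g_\eb)$. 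Next form $\Gamma:=\overline{\{(T(s)g,T(s)g_\eb):s\in\R\}}\subset\Cal H(g)\times\Cal H(g_\eb)$, the closure being taken in the product of weak topologies; $\Gamma$ is closed in the compact (by Tychonoff) space $\Cal H(g)\times\Cal H(g_\eb)$, so compact, and its first projection $\pi_1$ is continuous, compact-valued and contains $\Cal O(g)$, whence $\pi_1(\Gamma)=\Cal H(g)$. Finally the constraint set — $\{(a,b):\|a-b\|_{L^p_b}\le\eb\}$ in the first two cases, $\{(a,b):\sup_t\int_t^{t+\tau}\|a(s)-b(s)\|_V^p\,ds\le\eb\}$ in the third — is weakly closed (same reasoning as in part 1) and contains the defining set of $\Gamma$, hence contains $\Gamma$. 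Therefore every $h\in\Cal H(g)$ admits $h_\eb$ with $(h,h_\eb)\in\Gamma$; then $h_\eb\in\Cal H(g_\eb)$, lying in the claimed small space, and $\|h-h_\eb\|_{L^p_b}\le\eb$ (resp. the short-interval bound $\le\eb$ with the \emph{same} $\tau(\eb)$, uniformly in $h$). As $\eb>0$ was arbitrary this proves parts 3 and 5; for part 6 it gives \eqref{1.weak-norm-def} for the pair $(h,h_\eb)$ with the original parameters, and then Lemma \ref{Lem1.w-normalexp} — whose proof uses only \eqref{1.weak-norm-def} — applied to $h$ and $h_\eb$ yields \eqref{1.wn-exp} with the same $\omega$, again uniformly in $h$.

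I expect the delicate part to be the topological bookkeeping rather than any computation: verifying that each of the sets invoked (the $L^p_b$-ball, the subspace $L^p_{loc}(\R,V_\eb)$, the $L^\infty$-ball, the short-interval sublevel set) is genuinely weakly closed, so that the inclusion $\Cal H(g_\eb)\subset L^p_b(\R,V_\eb)$ (resp. $\subset L^\infty(\R,V)$) and the passage of estimates to the hull are legitimate; recording once and for all the weak compactness of $\Cal H(g)$, and hence of $\Cal H(g)\times\Cal H(g_\eb)$, which is what makes $\pi_1(\Gamma)=\Cal H(g)$ work; and, in part 6, keeping the uniformity honest — it is the \emph{same} $\tau(\eb)$ and the \emph{same} $\omega$, not merely some $h$-dependent approximation, that is inherited, and this uniformity is exactly what the asymptotic-compactness arguments later in the paper rely on. If one preferred to argue with weakly convergent subsequences of time shifts rather than with the compact set $\Gamma$, this last issue is where the Eberlein--Šmulian theorem together with a diagonalization would be needed.
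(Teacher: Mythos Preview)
Your proof is correct. The paper itself omits the proof of this proposition entirely (``straightforward but a bit technical, so \ldots we leave it to the reader''), so there is nothing to compare against directly.

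That said, your ``joint hull'' device for parts 3, 5 and 6 is worth flagging: it is a net-free, compactness-based repackaging of exactly the argument the paper gives immediately afterwards as Proposition~\ref{Prop2.app}. There the author picks a sequence $s_n$ with $T(s_n)g\to h$ weakly, passes to a subsequence so that $T(s_n)g_\eb$ also converges (to some $h_\eb\in\Cal H(g_\eb)$), and then invokes weak lower semicontinuity to carry the bound $\|g-g_\eb\|_{L^p_b}\le\eb$ to the limit. Your construction of the compact set $\Gamma\subset\Cal H(g)\times\Cal H(g_\eb)$ and the observation that $\pi_1(\Gamma)=\Cal H(g)$ accomplish the same thing without choosing sequences, and have the mild advantage that the uniformity in $h$ for part 6 is built in from the start rather than requiring a separate remark. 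The paper's sequential version is perhaps more transparent for a first reading; yours is cleaner once one is comfortable with the topology. Either way, the substantive point --- that all the relevant constraint sets are convex and strongly closed, hence weakly closed, and therefore inherited by the hull --- is the same, and you have identified it correctly.
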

The proof of the proposition is straightforward but a bit technical, so, in order to avoid the technicalities, we leave it to the reader.
\par
The next simple proposition is however crucial for what follows.
\begin{proposition}\label{Prop2.app} Let $g\in L^p_b(\R,V)$ be translation bounded and let $g_\eb\in L^p_b(\R,V)$ be such that
\begin{equation}
\|g-g_\eb\|_{L^p_b(\R,V)}\le \eb.
\end{equation}
Then, for any $h\in\mathcal H(g)$, there exists $h_\eb\in\mathcal H(g_\eb)$ such that
\begin{equation}\label{2.heb}
\|h-h_\eb\|_{L^p_b(\R,V)}\le \eb.
\end{equation}
\end{proposition}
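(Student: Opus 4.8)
The plan is to produce $h_\eb$ by a diagonal/weak-limit argument that mirrors the construction of $h$ itself. By definition of the weak hull, $h\in\mathcal H(g)$ means there is a sequence of shifts $T(s_n)g\to h$ in $L^p_{loc,w}(\R,V)$. I would then set $h_\eb^{(n)}:=T(s_n)g_\eb$. These all lie in $\mathcal O(g_\eb)\subset\mathcal H(g_\eb)$, and by Proposition \ref{Prop2.hull}(1) they are uniformly bounded in $L^p_b(\R,V)$ (indeed by $\|g_\eb\|_{L^p_b}\le\|g\|_{L^p_b}+\eb$). Hence, passing to a subsequence, $h_\eb^{(n)}\to h_\eb$ in $L^p_{loc,w}(\R,V)$ for some $h_\eb$, which necessarily belongs to the weak closure $\mathcal H(g_\eb)$. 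This gives the candidate; it remains to check the estimate \eqref{2.heb}.

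For the estimate, the key point is that the shift difference $w_n:=T(s_n)g-T(s_n)g_\eb=T(s_n)(g-g_\eb)$ satisfies $\|w_n\|_{L^p_b(\R,V)}\le\eb$ for every $n$, since translation is an isometry of $L^p_b(\R,V)$. On each finite interval $[-N,N]$ we have $w_n\to h-h_\eb$ weakly in $L^p((-N,N),V)$ (difference of two weakly convergent sequences). The $L^p((-N,N),V)$-norm is weakly lower semicontinuous — here I use that $V$ is reflexive and $1<p<\infty$, so $L^p((-N,N),V)$ is a reflexive Banach space and its norm is weakly l.s.c. — whence
\begin{equation*}
\|h-h_\eb\|_{L^p((-N,N),V)}\le\liminf_{n\to\infty}\|w_n\|_{L^p((-N,N),V)}\le (2N)^{1/p}\,\eb.
\end{equation*}
This alone is not \eqref{2.heb}; I must upgrade it to the uniformly-local norm. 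To do this I apply the same l.s.c. argument on a single unit window $[s,s+1]$ for arbitrary $s\in\R$: $\|h-h_\eb\|_{L^p((s,s+1),V)}\le\liminf_n\|w_n\|_{L^p((s,s+1),V)}\le\liminf_n\|w_n\|_{L^p_b(\R,V)}\le\eb$. Taking the supremum over $s\in\R$ yields $\|h-h_\eb\|_{L^p_b(\R,V)}\le\eb$, which is exactly \eqref{2.heb}.

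The main obstacle, and the only genuinely delicate point, is the passage from intervals to the uniformly-local supremum: one must be sure that the single weak limit $h_\eb$ extracted on $\R$ (via a diagonal subsequence over $N\to\infty$) simultaneously witnesses the bound on \emph{every} unit window, uniformly in $s$. This is handled by the observation above — the bound $\|w_n\|_{L^p((s,s+1),V)}\le\eb$ is uniform in both $n$ and $s$ before taking limits, so weak lower semicontinuity on each fixed window transfers it to $h-h_\eb$ with the same constant, and only then do we take $\sup_s$. Everything else (isometry of shifts, reflexivity giving weak sequential compactness of bounded sets, weak l.s.c.\ of the norm, stability of $\mathcal H(g_\eb)$ under weak limits) is standard, so I would state it briefly and spend the written proof on making the diagonal extraction and the window estimate precise.
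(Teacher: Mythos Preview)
Your proof is correct and follows essentially the same approach as the paper: take a sequence of shifts $T(s_n)g\to h$, extract a weakly convergent subsequence from $T(s_n)g_\eb$ using compactness of $\mathcal H(g_\eb)$ in $L^p_{loc,w}$, and call the limit $h_\eb$. The paper states that the estimate \eqref{2.heb} is ``easy to see'' without further comment; your argument via weak lower semicontinuity of the $L^p((s,s+1),V)$-norm on each unit window, followed by taking $\sup_s$, is exactly the right way to fill that in (your detour through the interval $[-N,N]$ is unnecessary and can be dropped).
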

\begin{proof} Indeed, let $h\in \mathcal H(g)$. By definition, this means that there exists a sequence $s_n\in\R$ such that
\begin{equation*}
h=\lim_{n\to\infty}T(s_n)g,
\end{equation*}
where the limit is understood as a limit in $L^p_{loc,w}(\R,V)$. Since the hull $\mathcal H(g_\eb)$ is a compact set in $L^p_{loc,w}(\R,V)$, up to passing to a subsequence, we may assume that the sequence $T(s_n)g_\eb$ is convergent. Finally, we set
\begin{equation*}
h_\eb:=\lim_{n\to\infty}T(s_n)g_\eb.
\end{equation*}
It is easy to see that the constructed $h_\eb$ satisfies \eqref{2.heb} and the proposition is proved.
\end{proof}
\begin{corollary}\label{Cor2.time} Let $g\in L^p_b(\R,V)$ be time regular and let $h_n\in \mathcal H(g)$ be such that $h_n\to h$ in $L^p_{loc,w}(\R,V)$. Then, up to passing to the subsequence, for every $\eb>0$, there exists a sequence $\varphi_n\in H^k_b(\R,V)$, for any $k\in\Bbb N$, such that $\varphi_n\to\varphi$ in $H^k_{loc,w}(\R,V)$ and
\begin{equation}
\|h_n-\varphi_n\|_{L^p_b(\R,V)}\le\eb,\ \ \|h-\varphi\|_{L^p_b(\R,V)}\le\eb.
\end{equation}
\end{corollary}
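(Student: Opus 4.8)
The plan is to produce the $\varphi_n$ from a single smooth-in-time approximant of $g$ via Proposition~\ref{Prop2.app}, and then extract the subsequence using the $H^k_b$-bounds that the weak hull inherits.

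First step: since $g$ is time regular, the mollification $g_\eb(t):=\int_\R\varphi_\eb(s)g(t-s)\,ds$ from the proof of Proposition~\ref{Prop1.mc} is smooth in time with all derivatives bounded, hence $g_\eb\in H^k_b(\R,V)$ for every $k$ (cf.\ Remark~\ref{Rem1.hc}), and $\|g-g_\eb\|_{L^p_b(\R,V)}\le\eb$.

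Second step (the heart of the matter): I claim $\Cal H(g_\eb)\subset H^k_b(\R,V)$ with $\|\psi\|_{H^k_b(\R,V)}\le\|g_\eb\|_{H^k_b(\R,V)}$ for all $\psi\in\Cal H(g_\eb)$ and all $k$. Indeed the time translations preserve the $H^k_b$-norm, so the bound holds on the orbit $\Cal O(g_\eb)$; and if $T(s_m)g_\eb\to\psi$ in $L^p_{loc,w}(\R,V)$, then $(T(s_m)g_\eb)$ is bounded in $H^k_{loc}(\R,V)$, hence weakly precompact there, and any $H^k_{loc}$-weak limit of a subsequence agrees with $\psi$ (the two weak limits coincide when tested against $C_0^\infty(\R)\otimes V^*$), so weak lower semicontinuity of the $H^k_b$-norm gives the bound for $\psi$. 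This is the $H^k$-strengthening of Proposition~\ref{Prop2.hull}(2), and it is the step I expect to be the most delicate, since one has to reconcile carefully the $L^p_{loc,w}$ topology defining the hull with the $H^k_{loc,w}$ topology one wants to exploit.

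Third step: given $h_n\in\Cal H(g)$, Proposition~\ref{Prop2.app} yields for each $n$ some $\varphi_n:=h_{n,\eb}\in\Cal H(g_\eb)$ with $\|h_n-\varphi_n\|_{L^p_b(\R,V)}\le\eb$, and by the second step $\varphi_n\in H^k_b(\R,V)$ with $\|\varphi_n\|_{H^k_b}\le\|g_\eb\|_{H^k_b}$ for every $k$. A diagonal argument over $k\in\Bbb N$ and over the intervals $[-N,N]$ then extracts a subsequence along which $\varphi_n\to\varphi$ in $H^k_{loc,w}(\R,V)$ for all $k$, while $h_n\to h$ still holds in $L^p_{loc,w}(\R,V)$. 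Since $H^1((-N,N),V)\hookrightarrow L^p((-N,N),V)$ continuously in the one time variable, weak $H^1_{loc}$-convergence implies weak $L^p_{loc}$-convergence, so $h_n-\varphi_n\to h-\varphi$ weakly in $L^p_{loc}(\R,V)$; applying weak lower semicontinuity of the $L^p((s,s+1),V)$-norm on each unit interval and taking the supremum over $s$ gives $\|h-\varphi\|_{L^p_b(\R,V)}\le\eb$, while $\|h_n-\varphi_n\|_{L^p_b}\le\eb$ holds by construction. Finally, to obtain a single subsequence valid for all $\eb$ at once, as the statement is phrased, I would repeat this with $\eb=1/m$, $m\in\Bbb N$, and diagonalize once more over $m$, choosing for a given $\eb$ any $m$ with $1/m\le\eb$.
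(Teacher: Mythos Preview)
Your argument is correct and follows essentially the same route as the paper: approximate $g$ by a single $g_\eb\in H^k_b(\R,V)$, push this to each $h_n$ via Proposition~\ref{Prop2.app} to obtain $\varphi_n\in\Cal H(g_\eb)\subset H^k_b(\R,V)$, and extract a weakly convergent subsequence using the uniform $H^k_b$-bound on the hull. You supply more detail than the paper on two points it leaves implicit --- why $\Cal H(g_\eb)$ inherits the $H^k_b$-bound (your second step), and why $\|h-\varphi\|_{L^p_b}\le\eb$ (your fifth step) --- and your final diagonalisation over $\eb=1/m$ is an extra refinement, addressing a literal reading of the quantifier order in the statement that the paper itself does not pursue (it is content to fix $\eb$ first and then pass to a subsequence, which suffices for the applications in Section~\ref{s4}).
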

\begin{proof} Indeed, by definition of time regularity, we may approximate $g\in L^p_b(\R,V)$ by $g_\eb\in H^k_b(\R,V)$, $k\in\Bbb N$ with any accuracy $\eb$. Due to Proposition \ref{Prop2.app}, we may approximate any $h_n\in\mathcal H(g)$ by the functions $\varphi_n\in\mathcal H(g_\eb)$ again with any accuracy $\eb>0$. It remains to note that $\mathcal H(g_\eb)\subset H^k_b(\R,V)$ and is compact in $H^k_{loc,w}(\R,V)$ for any $k\in\Bbb N$. Thus, passing to a subsequence if necessary, we have the convergence $\varphi_n\to \varphi$ in $H^k_{loc,w}(\R,V)$ for any $k\in\Bbb N$ and the corollary is proved.
\end{proof}
The next corollary establishes the analogous result for space regular functions.

\begin{corollary}\label{Cor2.space} Let $g\in L^p_b(\R,V)$ be space regular, $V:=H^s(\Omega)$, $s\in\R$, be a Sobolev space of distributions over the domain $\Omega\subset\R^n$ and let $h_n\in\mathcal H(g)$ be such that $h_n\to h$ in $L^p_{loc,w}(\R,V)$. Then, up to passing to a subsequence, for every $\eb>0$, there exists a sequence $\varphi_n\in L^p_b(\R,H^k(\Omega))$, for all $k\in\Bbb N$ such that $\varphi_n\to\varphi$ in the space $L^p_{loc,w}(\R,H^k(\Omega))$ and
\begin{equation}
\|h_n-\varphi_n\|_{L^p_b(\R,V)}\le\eb,\ \ \|h-\varphi\|_{L^p_b(\R,V)}\le\eb.
\end{equation}
\end{corollary}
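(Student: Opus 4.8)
The plan is to mimic the proof of Corollary \ref{Cor2.time}, replacing the time-regular approximation by the space-regular one and then upgrading the finite-dimensional range to a smooth (Sobolev) range by the observation in Remark \ref{Rem1.smooth}. First I would use the definition of space regularity: given $\eb>0$, choose a finite-dimensional subspace $V_\eb\subset V=H^s(\Omega)$ and $g_\eb\in L^p_b(\R,V_\eb)$ with $\|g-g_\eb\|_{L^p_b(\R,V)}\le\eb/2$. By Remark \ref{Rem1.smooth}, after an arbitrarily small perturbation of the spanning vectors of $V_\eb$ we may assume $V_\eb\subset C^\infty(\Omega)\cap H^k(\Omega)$ for every $k\in\Bbb N$ (if $\Omega$ is unbounded one takes compactly supported smooth functions, or smooth functions lying in all $H^k$; either way one keeps the error below $\eb$). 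Thus, without loss of generality, $g_\eb\in L^p_b(\R,H^k(\Omega))$ for all $k$, and $\|g-g_\eb\|_{L^p_b(\R,V)}\le\eb$.

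Next I would invoke Proposition \ref{Prop2.app}: since $h_n\in\mathcal H(g)$ and $\|g-g_\eb\|_{L^p_b(\R,V)}\le\eb$, for each $n$ there is $\varphi_n\in\mathcal H(g_\eb)$ with $\|h_n-\varphi_n\|_{L^p_b(\R,V)}\le\eb$, and likewise $\varphi\in\mathcal H(g_\eb)$ with $\|h-\varphi\|_{L^p_b(\R,V)}\le\eb$. The point now is that $\mathcal H(g_\eb)$, being the weak hull of a function with range in the fixed finite-dimensional space $V_\eb$, automatically satisfies $\mathcal H(g_\eb)\subset L^p_b(\R,V_\eb)\subset L^p_b(\R,H^k(\Omega))$ for every $k$, with the $L^p_b(\R,H^k)$-norm of every element bounded by $\|g_\eb\|_{L^p_b(\R,H^k)}$ (this is part 1 of Proposition \ref{Prop2.hull} applied in the space $H^k$, or simply the fact that on a finite-dimensional space all norms are equivalent and translation boundedness is preserved under weak limits). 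Hence $\{\varphi_n\}$ is bounded in $L^p_b(\R,H^k(\Omega))$ for every $k$, so by reflexivity and a diagonal argument over $k\in\Bbb N$ and over exhausting intervals $[-N,N]$, a subsequence converges in $L^p_{loc,w}(\R,H^k(\Omega))$ for all $k$ simultaneously; the limit is forced to be $\varphi$ because it already converges to $\varphi$ in the weaker topology $L^p_{loc,w}(\R,V)$.

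The only genuine subtlety — the place I would be most careful — is the smoothing step in the first paragraph when $\Omega$ is unbounded (the case flagged in the statement by $\Omega\subset\R^n$ with no boundedness assumption): one must check that a finite-dimensional subspace of $H^s(\Omega)$ can be perturbed into $\bigcap_k H^k(\Omega)$ while keeping the approximation error small and keeping the range finite-dimensional. This is handled exactly as in Proposition \ref{Prop1.s-reg} and Remark \ref{Rem1.smooth}: take an $\eb$-net of the (finitely many) unit vectors spanning $V_\eb$ inside $H^s$, replace each by a nearby element of $C_0^\infty(\Omega)$ (dense in $H^s$), and redefine $g_\eb$ by composing with the linear coordinate map; the coefficients of $g_\eb$ in a basis of $V_\eb$ are uniformly bounded in $t$, so the perturbation of the range is uniformly small in $L^p_b(\R,V)$. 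Once this is in place the rest is the routine diagonal-subsequence extraction described above, and the corollary follows.
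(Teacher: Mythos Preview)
Your approach is essentially the same as the paper's: approximate $g$ by $g_\eb$ with finite-dimensional range, upgrade $V_\eb$ to lie in every $H^k(\Omega)$ via Remark~\ref{Rem1.smooth}, pull the approximation to the hull via Proposition~\ref{Prop2.app}, and extract a weakly convergent subsequence using that $\mathcal H(g_\eb)\subset L^p_b(\R,V_\eb)$ is bounded (hence weakly compact) in every $L^p_{loc}(\R,H^k)$ by equivalence of norms on $V_\eb$. The paper's proof is just a terser version of what you wrote.

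One small slip: you obtain $\varphi$ twice --- once from Proposition~\ref{Prop2.app} applied to $h$, and once as the subsequential limit of $\varphi_n$ --- and then assert they coincide because ``it already converges to $\varphi$ in the weaker topology''. But nothing guarantees $\varphi_n\to\varphi$ in $L^p_{loc,w}(\R,V)$; you only know $\|h_n-\varphi_n\|\le\eb$ and $h_n\rightharpoondown h$, which does not pin down the weak limit of $\varphi_n$. The fix (which is what the paper does implicitly, and what is done in the proof of Corollary~\ref{Cor2.time}) is to \emph{define} $\varphi$ as the subsequential limit of $\varphi_n$ and then deduce $\|h-\varphi\|_{L^p_b(\R,V)}\le\eb$ from the fact that $h_n-\varphi_n\rightharpoondown h-\varphi$ and the $L^p_b$-norm does not increase under weak limits (Proposition~\ref{Prop2.hull}, part 1).
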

\begin{proof} Indeed, analogously to the previous corollary, we have a sequence $\varphi_n\in\mathcal H(g_\eb)$, with $g_\eb\in L^p_b(\R,V_n)$ where
$V_n$ is a {\it finite-dimensional} subspace of $V$. Moreover, without loss of generality, we may assume that $V_n\subset H^k(\Omega)$ for any $k\in\Bbb N$. Thus, using the fact that all norms are equivalent on a finite-dimensional space, we have $\mathcal H(g_\eb)\subset L^p_b(\R,H^k(\Omega))$ and is compact in $L^p_{loc,w}(\R,H^k(\Omega))$. Then, passing to a subsequence if necessary, we get the desired weak convergence $\varphi_n\to\varphi$ and finish the proof of the corollary.
\end{proof}
\section{Dynamical processes and uniform attractors: a brief reminder}\label{s3}
In this section, we briefly recall the main concepts related with non-autonomous dynamical systems and related uniform attractors, see e.g. \cite{CVbook} for more detailed exposition. We start with reminding the concept of a dynamical process related with the non-autonomous system.
\begin{definition}\label{Def3.pro} Let $W$ be a reflexive Banach space. A two parametric family of operators $U(t,\tau):W\to W$, $\tau\in\R$, $t\ge\tau$, is a dynamical process in $W$ if the following identities hold:
\begin{equation}\label{3.proc}
U(\tau,\tau)=\operatorname{Id},\ \ U(t,\tau)=U(t,s)\circ U(s,\tau),\ \ t\ge s\ge\tau,\ \tau\in\R.
\end{equation}
\end{definition}
Typical situation where a dynamical process naturally appear is the following one. Assume that we are given a non-linear (dissipative) PDE which we write in the following form
\begin{equation}\label{3.PDE}
\partial_t u=A(u)+g(t),\ \ u\big|_{t=\tau}=u_\tau,
\end{equation}
where $A(u)$ is a non-linear (unbounded) operator which we will not specify here (see examples in the next session) and $g(t)$ are the non-autonomous external forces which we assume to satisfy
\begin{equation}\label{3.force}
g\in L^p_b(\R,V)
\end{equation}
for some reflexive Banach space $V$ and some $1<p<\infty$.
\par
Assume also that problem \eqref{3.PDE} is globally well-posed in $W$, i.e., for every $u_\tau\in W$ there is a unique solution $u(t)\in W$, $t\ge\tau$. Then, as easy to see, the solution operators
\begin{equation}
U(t,\tau):W\to W,\ \  U(t,\tau)u_\tau:=u(t),\ \ t\ge\tau
\end{equation}
generate a dynamical process in $W$.
\par
To study the long time behavior of solutions of \eqref{3.PDE}, it is convenient (following to \cite{CVbook,CV95}) to consider not only \eqref{3.PDE} with a fixed right-hand side $g$, but also with all right-hand sides belonging to the hull $\mathcal H(g)$ of the initial external force:
\begin{equation}\label{3.PDEhull}
\partial_t u=A(u)+h(t),\ \ h\in\mathcal H(g),\ \ u\big|_{t=\tau}=u_\tau.
\end{equation}
Then, we have a family of dynamical processes $U_h(t,\tau):W\to W$ parametrized by $h\in\mathcal H(g)$ and this family satisfies the so-called translation identity:
\begin{equation}\label{3.trans}
U_h(t+s,\tau+s)=U_{T(s)h}(t,\tau),\  \ t\ge\tau,\ \ \tau,s\in\R.
\end{equation}
This identity allows us to reduce the non-autonomous dynamics to the autonomous one, but acting in the larger space. Namely, let
\begin{equation}\label{3.extended}
\Phi:=W\times \mathcal H(g)
\end{equation}
be the extended phase space associated with problem \eqref{3.PDEhull}. Then the extended semigroup on $\Phi$ is defined as follows:
\begin{multline}\label{3.sem}
\Bbb S(t)(u_0,h):=(U_h(t,0)u_0,T(t)h),\ (u_0,h)\in\Phi,\ t\ge0,\\ \Bbb S(t):\Phi\to\Phi,\ \ \Bbb S(t_1)\circ \Bbb S(t_2)=\Bbb S(t_1+t_2),\ t_i\ge0.
\end{multline}
Indeed, the semigroup property of \eqref{3.sem} is an immediate corollary of the translation identity \eqref{3.trans}. Thus, we may speak about global attractors of the semigroup $\Bbb S(t)$.

\begin{definition}\label{Def3.wattr} A set $\Bbb A\subset\Phi$ is a weak global attractor of $\Bbb S(t)$ if the following assumptions are satisfied:
\par
1) $\Bbb A$ is a compact set in $\Phi$ endowed by the weak topology endowed by the embedding $\Phi\subset W_w\times L^p_{loc,w}(\R,V)$. In the sequel, we will refer to this topology as~$\Phi_w$;
\par
2) $\Bbb A$ is strictly invariant: $\Bbb S(t)\Bbb A=\Bbb A$;
\par
3) $\Bbb A$ attracts the images of bounded sets of $\Phi$ in a weak topology $\Phi_w$, i.e., for every bounded set $\Bbb B$ in $\Phi$ and every neighborhood $\Bbb O(\Bbb A)$ of the set $\Bbb A$ in $\Phi_w$, there exists time $T=T(\Bbb B,\Bbb O)$ such that
\begin{equation}
\Bbb S(t)\Bbb B\subset\Bbb O(\Bbb A),\ \ t\ge T.
\end{equation}
Then, the projection of $\Bbb A$ to the first component ($W$) is called weak {\it uniform} attractor associated with the family of processes $U_h(t,\tau):W\to W$,\ $t\ge\tau$,\ $h\in\Cal H(g)$:
\begin{equation}
\Cal A:=\Pi_1\Bbb A.
\end{equation}
\end{definition}
To verify the existence of a weak attractor, we need some natural assumptions on the processes $U_h(t,\tau)$.

\begin{definition}\label{Def3.dis} A family of dynamical processes $U_h(t,\tau):W\to W$, $h\in\Cal H(g)$ is (uniformly) dissipative if the following estimate holds:
\begin{equation}\label{3.dis}
\|U_h(t,\tau)u_\tau\|_W\le Q(\|u_\tau\|_W)e^{-\beta(t-\tau)}+C_*,\  \tau\in\R,\ t\ge\tau,\ \ h\in\Cal H(g),
\end{equation}
where the positive constant $\beta$ and monotone function $Q$ are independent of $u_\tau\in W$, $h\in\Cal H(g)$ and $t\ge\tau$.
\par
A family $U_h(t,\tau):W\to W$ is weakly continuous if for any fixed $t$ and $\tau$, the weak convergence $u_\tau^n\to u_\tau$ in $W$ and $h_n\to h$ in $L^p_{loc,w}(\R,V)$ implies that
\begin{equation}\label{3.weakc}
U_{h_n}(t,\tau)u_\tau^n\to U_h(t,\tau)u_\tau
\end{equation}
weakly in $W$.
\end{definition}

\begin{theorem}\label{Th3.wa} Let the family of processes $U_h(t,\tau):W\to W$, $h\in\Cal H(g)$, associated with problems \eqref{3.PDEhull} be uniformly dissipative and weakly continuous. Then, the associated extended semigroup $\Bbb S(t):\Phi\to\Phi$ possesses a weak global attractor $\Bbb A$ and the associated uniform attractor $\Cal A=\Pi_1\Bbb A$ is generated by all bounded solutions of \eqref{3.PDEhull} with $h\in\Cal H(g)$:
\begin{equation}\label{3.str}
\Cal A=\cup_{h\in\Cal H(g)}\Cal K_h\big|_{t=0},
\end{equation}
where $\Cal K_h\subset L^\infty(\R,W)$ is a set of all trajectories $u(t)$, $t\in\R$, of problem \eqref{3.PDEhull} which are defined for all $t\in\R$ and remain bounded (the so-called kernel of equation \eqref{3.PDEhull} in the terminology of Vishik and Chepyzhov, see \cite{CVbook}).
\end{theorem}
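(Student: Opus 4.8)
The plan is to establish the existence of a weak global attractor $\Bbb A$ for the extended semigroup $\Bbb S(t):\Phi\to\Phi$ by verifying the three standard requirements of the abstract attractor existence theorem in the weak topology $\Phi_w$: existence of a compact (in $\Phi_w$) absorbing set, continuity of $\Bbb S(t)$ in $\Phi_w$, and closedness of the trajectories. First I would produce the absorbing set. From the uniform dissipativity estimate \eqref{3.dis}, the ball $\Bbb B_0:=B(0,2C_*,W)\times\Cal H(g)$ absorbs every bounded set of $\Phi$ in finite time. Since $V$ is reflexive and $1<p<\infty$, the hull $\Cal H(g)$ is, by its very definition \eqref{2.hull} and Proposition \ref{Prop2.hull}(1), a bounded, closed, hence compact subset of $L^p_{loc,w}(\R,V)$; and bounded balls of the reflexive space $W$ are compact in $W_w$. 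Therefore $\Bbb B_0$ is compact in $\Phi_w$, so we have a compact absorbing set.

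Next I would check that $\Bbb S(t)$ is continuous on $\Bbb B_0$ with respect to $\Phi_w$ for each fixed $t\ge0$. This is exactly the content of the weak continuity hypothesis: if $(u_0^n,h_n)\to(u_0,h)$ in $\Phi_w$, i.e. $u_0^n\to u_0$ weakly in $W$ and $h_n\to h$ in $L^p_{loc,w}(\R,V)$, then \eqref{3.weakc} gives $U_{h_n}(t,0)u_0^n\to U_h(t,0)u_0$ weakly in $W$, while the translation group $T(t)$ is (strongly, hence weakly) continuous on $\Cal H(g)$ in $L^p_{loc,w}(\R,V)$; combining the two components yields $\Bbb S(t)(u_0^n,h_n)\to\Bbb S(t)(u_0,h)$ in $\Phi_w$. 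With a compact absorbing set and a $\Phi_w$-continuous semigroup, the classical theorem on attractors (applied in the topological space $\Phi_w$, which is metrizable on bounded sets since $W_w$ and $L^p_{loc,w}(\R,V)$ are) provides a compact set $\Bbb A\subset\Phi_w$ which is strictly invariant, $\Bbb S(t)\Bbb A=\Bbb A$, and attracts bounded sets in $\Phi_w$; this is the weak global attractor. Its first projection $\Cal A:=\Pi_1\Bbb A$ is then the weak uniform attractor.

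It remains to prove the structural representation \eqref{3.str}. The inclusion $\cup_{h}\Cal K_h|_{t=0}\subset\Cal A$ follows because any bounded complete trajectory $u(\cdot)$ of \eqref{3.PDEhull} gives rise, via $t\mapsto(u(t),T(t)h)$, to a bounded complete trajectory of $\Bbb S(t)$ in $\Phi$; by strict invariance and the attraction property, any such complete bounded trajectory lies in $\Bbb A$, hence $u(0)\in\Pi_1\Bbb A=\Cal A$. For the reverse inclusion $\Cal A\subset\cup_h\Cal K_h|_{t=0}$, one uses strict invariance $\Bbb S(t)\Bbb A=\Bbb A$ to lift any point $(u_0,h)\in\Bbb A$ to a complete trajectory $(u(t),h(t))_{t\in\R}$ of $\Bbb S(t)$ inside the bounded set $\Bbb A$; the second component forces $h(t)=T(t)\tilde h$ for some $\tilde h\in\Cal H(g)$ by \eqref{3.sem}, and the translation identity \eqref{3.trans} then shows the first component $u(\cdot)$ is a complete bounded solution of \eqref{3.PDEhull} with right-hand side $\tilde h$, i.e. $u\in\Cal K_{\tilde h}$ and $u_0=u(0)\in\Cal K_{\tilde h}|_{t=0}$. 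Finally, boundedness of $\Cal K_h$ uniformly in $h$ in $L^\infty(\R,W)$ follows from \eqref{3.dis} by letting $\tau\to-\infty$.

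The main obstacle is not any single hard estimate but the bookkeeping needed to make the abstract attractor machinery applicable in the weak topology: one must confirm that $\Phi_w$ restricted to the bounded absorbing set $\Bbb B_0$ is a metrizable (in fact compact metric) space so that the sequential notions of continuity and attraction used in Definition \ref{Def3.dis} and Theorem \ref{Th3.wa} coincide with the topological ones, and that the $\omega$-limit set construction $\Bbb A=\cap_{s\ge0}[\cup_{t\ge s}\Bbb S(t)\Bbb B_0]_{\Phi_w}$ genuinely produces a strictly invariant attractor under mere continuity (without compactness of individual maps) — this works precisely because $\Bbb B_0$ is itself compact in $\Phi_w$, which is the one place the reflexivity of $V$ and $W$ is essential.
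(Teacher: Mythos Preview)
Your proposal is correct and follows essentially the same route as the paper: the paper gives only a brief sketch, referring to \cite{CVbook} for details, but that sketch is precisely your argument---dissipativity yields a bounded absorbing set, reflexivity of $W$ and $L^p_{loc}(\R,V)$ makes it compact in $\Phi_w$, weak continuity gives $\Phi_w$-continuity of $\Bbb S(t)$, the abstract attractor theorem applies, and the representation \eqref{3.str} follows from the description of $\Bbb A$ via complete bounded trajectories. Your additional remarks on metrizability of $\Phi_w$ on bounded sets and the $\omega$-limit construction are exactly the kind of bookkeeping the paper delegates to the reference.
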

For the rigorous proof of this theorem, see \cite{CVbook}. We just briefly mention here that it can be deduced from the standard attractor existence theorems for semigroups. Indeed, the dissipativity guarantees the existence of a bounded absorbing set for $\Bbb S(t)$. Then, the reflexivity of $W$ and $L^p_{loc}(\R,V)$ gives that this absorbing set is compact in $\Phi_w$ and the the weak continuity assumption implies that the operators $\Bbb S(t)$ are continuous in $\Phi_w$ for every fixed $t$. Thus, by the abstract attractor existence theorem, we have the existence of a weak global attractor $\Bbb A$ and representation \eqref{3.str} also follows from the fact that $\Bbb A$ is generated by all complete bounded trajectories of the associated semigroup $\Bbb S(t)$.

\begin{remark}\label{Rem3.uniform} There is an intrinsic definition of a uniform attractor $\Cal A$ which does not refer to the reduction of the dynamical process to a semigroup and does not uses hulls. Namely, a set $\Cal A\subset W$ is a  weak uniform attractor for the dynamical process $U_g(t,\tau):W\to W$ associated with equation \eqref{3.PDE} if
\par
1) $\Cal A$ is a compact set in a weak topology of $W$;
\par
2) For every bounded set $B\subset W$ and any neighborhood $\Cal O(\Cal A)$ of $\Cal A$ in a weak topology of $W$, there exists $T=T(B,\Cal O)$ such that
\begin{equation}
U_g(t,\tau)B\subset \Cal O(\Cal A),\ \text{ if }\ \ t-\tau>T,
\end{equation}
uniformly with respect to $\tau\in\R$.
\par
3) $\Cal A$ is a {\it minimal} set which satisfies 1) and 2).
\par
The equivalence of this definition and the one given before under the assumption of weak continuity of the processes $U_h(t,\tau)$ is proved in \cite{CVbook}. However, from our point of view the definition based on the reduction to a semigroup is more transparent, so we use it as the main one in this section.
\par
Mention also the alternative definition given in this remark allows to construct a uniform attractor even without the weak continuity. However, then
the key representation formula \eqref{3.str} may be lost and the theory becomes essentially less elegant.
\end{remark}
In applications of the next session, we always be in a situation where the existence of a weak uniform attractor is evident and verified in the previous papers, so we will mainly interested in under what extra assumptions on the external force $g$, we will have also attraction in a {\it strong} topology of $W$.

\begin{definition}\label{Def3.sattr} A set $\Bbb A\subset\Phi$ is a strong global attractor of $\Bbb S(t)$ if the following assumptions are satisfied:
\par
1) $\Bbb A$ is a compact set in $\Phi$ endowed by the  topology endowed by the embedding $\Phi\subset W\times L^p_{loc,w}(\R,V)$. In the sequel, we will refer to this topology as $\Phi_s$. Note that since $g$  is not assumed translation compact, so the topology on the hull $\Cal H(g)$ remains weak, but the strong topology on $W$ is chosen;
\par
2) $\Bbb A$ is strictly invariant: $\Bbb S(t)\Bbb A=\Bbb A$;
\par
3) $\Bbb A$ attracts the images of bounded sets of $\Phi$ in a strong topology $\Phi_s$, i.e., for every bounded set $\Bbb B$ in $\Phi$ and every neighborhood $\Bbb O(\Bbb A)$ of the set $\Bbb A$ in $\Phi_s$, there exists time $T=T(\Bbb B,\Bbb O)$ such that
\begin{equation}
\Bbb S(t)\Bbb B\subset\Bbb O(\Bbb A),\ \ t\ge T.
\end{equation}
Then, the projection of $\Bbb A$ to the first component ($W$) is called strong {\it uniform} attractor associated with the family of processes $U_h(t,\tau):W\to W$,\ $t\ge\tau$,\ $h\in\Cal H(g)$:
\begin{equation}
\Cal A:=\Pi_1\Bbb A.
\end{equation}
\end{definition}
To verify the existence of a strong uniform attractor, we need to pose some extra assumptions of the family of processes $U_h(t,\tau)$.
\begin{definition}\label{Def3.as} A family of dynamical processes $U_h(t,\tau):W\to W$, $h\in\Cal H(g)$, is (uniformly) asymptotically compact if for any sequence $u_n\in W$ bounded in $W$, any sequence of external forces $h_n\in\Cal H(g)$ and any $t_n\ge\tau_n$ such that $t_n-\tau_n\to\infty$, the sequence
\begin{equation}
\big\{U_{h_n}(t_n,\tau_n)u_n\big\}_{n=1}^\infty\subset W
\end{equation}
is precompact in $W$.
\end{definition}
\begin{theorem}\label{Th3.strong} Let the assumptions of Theorem \ref{Th3.wa} hold and let, in addition, the family of dynamical processes $U_h(t,\tau):W\to W$, $h\in\Cal H(g)$, be uniformly asymptotically compact. Then, the weak uniform attractor $\Cal A$ constructed in Theorem \ref{Th3.wa} is simultaneously a strong uniform attractor in $W$ (in the sense of Definition \ref{Def3.sattr}).
\end{theorem}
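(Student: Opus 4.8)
The plan is to show that the very same set $\Bbb A\subset\Phi$ produced by Theorem~\ref{Th3.wa} satisfies the three requirements of Definition~\ref{Def3.sattr}; then its first projection is at once the strong uniform attractor and coincides with the weak one already constructed. Strict invariance $\Bbb S(t)\Bbb A=\Bbb A$ (item~2) is available from Theorem~\ref{Th3.wa} and is independent of the topology, so only two facts remain: that $\Bbb A$ is compact in $\Phi_s$ (item~1) and that it attracts bounded sets in $\Phi_s$ (item~3). Both will come from the uniform asymptotic compactness of the family $U_h(t,\tau)$ together with what we already know in the weak topology.

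First I would verify the strong compactness of $\Bbb A$. Being weakly compact, $\Bbb A$ is bounded in $\Phi$, hence $\Pi_1\Bbb A$ is bounded in $W$. Fix any sequence $(u_n,h_n)\in\Bbb A$ and any $t_n\to+\infty$. By strict invariance $\Bbb A=\Bbb S(t_n)\Bbb A$, so $(u_n,h_n)=\Bbb S(t_n)(a_n,b_n)=(U_{b_n}(t_n,0)a_n,\,T(t_n)b_n)$ for suitable $(a_n,b_n)\in\Bbb A$. Since $\{a_n\}$ is bounded in $W$, $b_n\in\Cal H(g)$ and $t_n-0\to\infty$, uniform asymptotic compactness makes $\{u_n\}=\{U_{b_n}(t_n,0)a_n\}$ precompact in $W$, while $\{h_n\}\subset\Cal H(g)$ is precompact in $L^p_{loc,w}(\R,V)$ because the hull is weakly compact. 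Passing to a subsequence, $(u_n,h_n)$ converges in $\Phi_s$, so $\Bbb A$ is precompact in $\Phi_s$; since $\Bbb A$ is weakly closed and $\Phi_s$ is finer than $\Phi_w$, it is $\Phi_s$-closed as well, hence $\Phi_s$-compact.

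Next I would establish the strong attraction by contradiction. If $\Bbb A$ failed to attract some bounded $\Bbb B\subset\Phi$ in $\Phi_s$, there would be a $\Phi_s$-neighbourhood $\Bbb O(\Bbb A)$, a sequence $t_n\to+\infty$ and points $(u_n^0,h_n^0)\in\Bbb B$ with $\Bbb S(t_n)(u_n^0,h_n^0)=:(v_n,g_n)\notin\Bbb O(\Bbb A)$, where $v_n=U_{h_n^0}(t_n,0)u_n^0$ and $g_n=T(t_n)h_n^0\in\Cal H(g)$. As $\Bbb B$ is bounded, $\{u_n^0\}$ is bounded in $W$, so uniform asymptotic compactness (with $\tau_n=0$) makes $\{v_n\}$ precompact in $W$; together with the weak compactness of $\Cal H(g)$ this yields, along a subsequence, $(v_n,g_n)\to(v,g_\infty)$ in $\Phi_s$, hence also in $\Phi_w$. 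The one genuinely substantial point is to locate $(v,g_\infty)$: by Theorem~\ref{Th3.wa} the set $\Bbb A$ attracts $\Bbb B$ in $\Phi_w$, so $(v_n,g_n)$ enters every $\Phi_w$-neighbourhood of $\Bbb A$; since $\Bbb A$ is a compact subset of the Hausdorff space $\Phi_w$, it can be separated from any exterior point, and therefore the weak limit $(v,g_\infty)$ must lie in $\Bbb A$. But then $(v,g_\infty)\in\Bbb A\subset\Bbb O(\Bbb A)$ and $(v_n,g_n)\to(v,g_\infty)$ in $\Phi_s$ force $(v_n,g_n)\in\Bbb O(\Bbb A)$ for all large $n$, contradicting the choice of the sequence.

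I expect this final identification of the strong limit as a point of $\Bbb A$ to be the only step needing care: conceptually it records that passing from the weak to the strong topology on $\Phi$ cannot enlarge the $\omega$-limit set of a bounded set, so no new limit points appear and $\Bbb A$ stays the attractor — the uniform asymptotic compactness merely upgrades the mode of convergence from weak to strong. Everything else is a routine unwinding of the definitions of dissipativity, of asymptotic compactness, and of the topologies $\Phi_w$ and $\Phi_s$.
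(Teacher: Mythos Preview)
Your argument is correct and is the standard one; the paper itself does not give a proof of Theorem~\ref{Th3.strong} but simply refers the reader to \cite{CVbook,LWZ}, where essentially the same reasoning (weak attractor $+$ asymptotic compactness $\Rightarrow$ strong compactness and strong attraction, the latter via the contradiction you describe) is carried out. Two cosmetic remarks: in the compactness step you should \emph{choose} a sequence $t_n\to+\infty$ rather than ``fix any $t_n\to+\infty$''; and your argument produces \emph{sequential} compactness of $\Bbb A$ in $\Phi_s$, so to conclude topological compactness as required by Definition~\ref{Def3.sattr} you are implicitly using that $\Cal H(g)$ is metrizable in $L^p_{loc,w}(\R,V)$ --- this is true under the standing separability assumptions in \cite{CVbook} and is routinely invoked there.
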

For the proof of this theorem, see \cite{CVbook,LWZ}.

\begin{remark}\label{Rem3.as-comp} As we can see, to verify the existence of a strong attractor, one basically needs to check the asymptotic compactness, so we will concentrate below on verification of  this property. Note also that the asymptotic compactness condition can be slightly simplified using the translation invariance of the family of processes $U_h(t,\tau): W\to W$, $h\in\Cal H(g)$. Namely, it is sufficient to check that for any sequence $u_n\in W$ bounded in $W$, any sequence of external forces $h_n\in\Cal H(g)$ and any sequence of initial times $\tau_n\to-\infty$, the sequence
\begin{equation}\label{3.ac-simple}
\big\{U_{h_n}(0,\tau_n)u_n\big\}_{n=1}^\infty\subset W
\end{equation}
is precompact in $W$. In the sequel, we will check the asymptotic compactness exactly in this form.
\end{remark}

\section{Applications: asymptotic compactness via the energy method}\label{s4}
In this section, applying the theory developed above, we verify the existence of a strong uniform attractors for a number of equations of mathematical physics. We will suggest here a unified approach to check the asymptotic compactness using the so-called energy method and approximations of a given external force by more regular ones. Mention also that although there is a number of papers where the existence of a strong attractors for equations with non translation compact external forces has been established, to the best of our knowledge, the usage of the energy method has been restricted to translation-compact external forces only.
\par
We start our exposition with 2D Navier-Stokes problem with non-autonomous external forces.

\subsection{Navier-Stokes equations} We consider the following Navier-Stokes problem in a bounded 2D domain $\Omega\subset\R^2$ with smooth boundary:
\begin{equation}\label{4.ns}
\Dt u+B(u,u)=\nu Au+g(t),\ \divv u=0,\ \ u\big|_{\partial\Omega}=0,\ \ \ u\big|_{t=\tau}=u_\tau.
\end{equation}
Here $A:=\Pi\Dx$ is a Stokes operator, $\nu>0$ is a kinematic viscosity, the nonlinearity
\begin{equation}
B(u,u):=\Pi((u,\Nx)u),
\end{equation}
where $\Pi$ is a Leray projector to divergent free vector fields and the external forces are assumed translation bounded in $H^{-1}(\Omega)$:
\begin{equation}\label{4.ext}
g\in L^2_b(\R,H^{-1}(\Omega)).
\end{equation}
It is well-known that equations \eqref{4.ns} are well-posed in the phase space $H$ which is the closure of divergent free vector fields $C_0^\infty(\Omega)$ in the topology of $[L^2(\Omega)]^2$. Namely, for any $\tau\in\R$ and every $u_\tau\in H$, there is a unique solution
\begin{equation}
u\in C([\tau,T],H)\cap L^2([\tau,T],H^1_0(\Omega)),
\end{equation}
for any $T\ge\tau$. Moreover, this solution satisfies the energy estimate
\begin{equation}\label{4.en}
\|u(t)\|_H^2\le \|u_\tau\|^2_H e^{-\beta(t-\tau)}+C\|g\|^2_{L^2_b(\R,H^{-1}(\Omega))},
\end{equation}
where $\beta,C>0$ are independent of $\tau$, $t$ and $u$.
\par
Following the general scheme, we introduce a weak hull $\Cal H(g)$ of the given external force $g$ and consider a family of Navier-Stokes problems:
\begin{equation}\label{4.ns-hull}
\Dt u+B(u,u)=\nu Au+h(t),\ \divv u=0,\ \ \ \ \ u\big|_{t=\tau}=u_\tau,\ \ h\in\Cal H(g).
\end{equation}
Then, these problems generate a family of dynamical processes $U_h(t,\tau):H\to H$, $h\in\Cal H(g)$ and estimate \eqref{4.en} guarantees that this family is uniformly dissipative. Moreover, the weak continuity of these processes is also straightforward and, therefore, according to Theorem \ref{Th3.wa}, there exists a {\it weak} uniform attractor $\Cal A$ of the problem \eqref{4.ns-hull} in $H$ which possesses the description \eqref{3.str}, see \cite{CVbook} for more details.
\par
Thus, the existence of a weak uniform attractor $\Cal A$ is well-known for the problem considered and our next task is to verify that this attractor is actually strong under the additional assumptions on the external forces $g$. As we know from Theorem \ref{Th3.strong}, it is sufficient to verify the asymptotic compactness to gain the strong attractor.
\par
We start with the case of normal external forces. Although the result of the next theorem is not new, see \cite{Lu}, the previous proofs require rather delicate and complicated technique and our proof is based on the elementary energy method.

\begin{theorem}\label{Th4.ns-norm}Let the above assumptions hold and let, in addition, the external force $g\in L^2_b(\R,H^{-1}(\Omega))$ be normal.
Then, the weak uniform attractor $\Cal A$ in $H$ related with the family of Navier-Stokes problems \eqref{4.ns-hull} is actually strong uniform attractor.
\end{theorem}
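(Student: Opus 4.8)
The plan is to verify the uniform asymptotic compactness of the family $U_h(t,\tau)$, $h\in\Cal H(g)$, via the energy method, and then invoke Theorem \ref{Th3.strong}. Following Remark \ref{Rem3.as-comp}, I would take sequences $u_n\in H$ bounded, $h_n\in\Cal H(g)$, and $\tau_n\to-\infty$, and show that $\{U_{h_n}(0,\tau_n)u_n\}$ is precompact in $H$. Since the weak uniform attractor already exists, weak convergence (along a subsequence) of $u(0):=U_{h_n}(0,\tau_n)u_n$ to some limit $\xi\in\Cal A$ is automatic, together with $h_n\to h$ in $L^2_{loc,w}(\R,H^{-1})$; the whole point is to upgrade this to strong convergence in $H$, which by uniform convexity of $H$ reduces to showing $\|u_n(0)\|_H\to\|\xi\|_H$ (i.e. $\limsup\|u_n(0)\|_H\le\|\xi\|_H$, the opposite inequality being weak lower semicontinuity).

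The energy method proceeds by writing the energy identity for \eqref{4.ns-hull}: multiplying by $u$ and using $(B(u,u),u)=0$ gives
\begin{equation*}
\tfrac12\tfrac{d}{dt}\|u(t)\|_H^2+\nu\|\nabla u(t)\|^2\le \langle h(t),u(t)\rangle,
\end{equation*}
and after absorbing the right-hand side one obtains a differential inequality $\tfrac{d}{dt}\|u\|_H^2+\beta\|u\|_H^2\le C\|h(t)\|_{H^{-1}}^2$ with the energy equality (not just inequality) available for 2D Navier–Stokes. Integrating from $\tau_n$ to $0$ and passing to the limit, the initial term $\|u_n\|_H^2 e^{\beta\tau_n}$ dies; the difficulty is the forcing term $\int_{-\infty}^0 e^{\beta s}\langle h_n(s),u_n(s)\rangle\,ds$, where $h_n$ converges only weakly and $u_n$ only weakly in the natural spaces, so one cannot directly pass to the limit in this product. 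This is the main obstacle, and it is exactly where normality of $g$ enters. I would split $h_n=h_n^{1}+(h_n-h_n^{1})$ using Proposition \ref{Prop2.hull}(4): given $\e>0$ choose $\tau=\tau(\e)$ so that $\sup_{h\in\Cal H(g)}\sup_t\int_t^{t+\tau}\|h(s)\|_{H^{-1}}^2\,ds\le\e$, which by Lemma \ref{Lem1.normalexp}-type reasoning (applied uniformly over the hull, cf. Lemma \ref{Lem1.w-normalexp}) gives
\begin{equation*}
\limsup_{n\to\infty}\int_{-1}^{0}e^{\beta s}\|h_n(s)\|_{H^{-1}}^2\,ds\le \omega(\e)\to0,
\end{equation*}
so the tail of the forcing integral contributes negligibly; more precisely the contribution of the forcing to $\limsup\|u_n(0)\|_H^2$ can be bounded by $C\omega(\e)+$ terms of the form $\int_{-M}^{-1}e^{\beta s}\langle h_n,u_n\rangle$, and since on any fixed finite interval $[-M,-1]$ the solutions $u_n$ converge strongly in $L^2([-M,-1],H)$ (by the standard Aubin–Lions compactness for 2D Navier–Stokes: $u_n$ bounded in $L^2_{loc}(H^1_0)$ with $\partial_t u_n$ bounded in $L^{1}_{loc}(H^{-1})$ or $L^{2}_{loc}$ of a negative space), one can pass to the limit in $\langle h_n,u_n\rangle$ pairing weak-times-strong.

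Putting it together: writing the energy equality for $u_n$ and for the limit trajectory $u(\cdot)$ solving \eqref{4.ns-hull} with force $h$ (the limit trajectory being a complete bounded trajectory, hence on $\Cal A$), subtracting, and carefully handling the three pieces — the exponentially decaying initial data, the strongly convergent finite-time pairing, and the uniformly small normal tail — yields $\limsup_{n\to\infty}\|u_n(0)\|_H^2\le \|\xi\|_H^2+C\omega(\e)$ for every $\e>0$, hence $\limsup\|u_n(0)\|_H\le\|\xi\|_H$. Combined with $u_n(0)\rightharpoonup\xi$ weakly in $H$ and the Hilbert (uniformly convex) structure of $H$, this gives $u_n(0)\to\xi$ strongly in $H$, which is the required asymptotic compactness. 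By Theorem \ref{Th3.strong} the weak uniform attractor $\Cal A$ is then a strong uniform attractor. The only genuinely delicate point is making the splitting of $h_n$ and the passage to the limit consistent with the weak convergence $h_n\rightharpoonup h$ — that is, choosing the approximating normal tail estimates uniformly in $n$, which is precisely what Proposition \ref{Prop2.hull}(4) provides.
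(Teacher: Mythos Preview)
Your overall strategy matches the paper's, but your treatment of the forcing term $\int e^{\beta s}(h_n(s),u_n(s))\,ds$ has a real gap. The ``weak-times-strong'' passage to the limit on $[-M,-1]$ does not work: $h_n\rightharpoondown h$ weakly only in $L^2(H^{-1}(\Omega))$, while Aubin--Lions gives $u_n\to u$ strongly only in $L^2(H)$, i.e.\ in $L^2$ in space, \emph{not} in $L^2(H^1_0)$. Since the pairing $(h_n,u_n)$ is the $H^{-1}\!\times\! H^1_0$ duality, weak convergence of $h_n$ in $H^{-1}$ paired with strong $L^2$-convergence of $u_n$ is insufficient; with forcing only in $H^{-1}$ there is no smoothing that would upgrade $u_n$ to strong $L^2(H^1_0)$-convergence on any subinterval. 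Similarly, your ``tail'' estimate with the \emph{fixed} dissipation exponent $\beta$ does not exploit normality: $\int_{-1}^0 e^{\beta s}\|h_n(s)\|_{H^{-1}}^2\,ds$ is merely bounded, not small. (Note also that Proposition~\ref{Prop2.hull}(4) is a uniform smallness statement, not a decomposition $h_n=h_n^1+(h_n-h_n^1)$ into more regular pieces.)

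The paper's proof avoids passing to the limit in the forcing term altogether. It introduces an artificial \emph{large} parameter $N$, multiplies the energy identity by $e^{Nt}(t+1)$, and integrates only over $[-1,0]$ (the factor $(t+1)$ removes the boundary contribution at $t=-1$). Young's inequality then bounds the forcing contribution by
\[
C_\eb\int_{-1}^0 e^{2Ns}\|h_n(s)\|_{H^{-1}}^2\,ds+\eb\int_{-1}^0\|u_n(s)\|_{H^1_0}^2\,ds;
\]
the first term tends to $0$ as $N\to\infty$ precisely by normality (Lemma~\ref{Lem1.normalexp}, uniformly over the hull via Proposition~\ref{Prop2.hull}(4)), and the second is small by choice of $\eb$. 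One lets $n\to\infty$ first (the remaining terms involve only the strongly $L^2(H)$-convergent $\|u_n\|_H^2$ and the weakly lower-semicontinuous $\|\Nx u_n\|^2$), and then $N\to\infty$. This double-limit device with the artificial exponent $N$ is the key idea you are missing.
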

\begin{proof} According to Theorem \ref{Th3.strong}, we only need to verify the asymptotic compactness. Let $h_n\in\Cal H(g)$, $\tau_n\to-\infty$
and $u_{\tau_n}\in H$ be a bounded sequence. We need to check that the sequence $U_{h_n}(0,\tau_n)u_{\tau_n}$ is precompact in $H$, see Remark \ref{Rem3.as-comp}. Let $u_n(t)=U_{h_n}(t,\tau_n)u_{\tau_n}$ be the corresponding solutions. Then, these functions solve
\begin{equation}\label{4.ns-seq}
\Dt u_n+B(u_n,u_n)=\nu A u_n+h_n(t),\ \divv u_n=0,\ \ u_n\big|_{t=\tau_n}=u_{\tau_n},\ \ t\ge\tau_n.
\end{equation}
Moreover, since the sequence $u_{\tau_n}$ is bounded, the sequence of corresponding solutions $u_n(t)$ is bounded in $L^\infty_{loc}(\R,H)\cap L^2_{loc}(\R,H^1_0(\Omega))$ (we just extend $u_n(t)$ by zero for $t\le\tau_n$). Thus, without loss of generality, we may assume that the sequence of external forces $h_n\to h\in\Cal H(g)$ weakly in $L^2_{loc}(\R,H^{-1}(\Omega))$ and
$u_n\to u$ weakly-star in $L^\infty_{loc}(\R,H)$ and weakly in $L^2_{loc}(\R,H^1_0(\Omega))$. Moreover, estimating the time derivative $\Dt u_n$ from equations \eqref{4.ns-seq} and using the compactness theorems, we see that $u_n\to u$  {\it strongly} in $L^2_{loc}(\R,H)$,
 see \cite{CVbook} for the details. Furthermore, passing to the limit $n\to\infty$ in equations \eqref{4.ns-seq}, we see that the limit function $u(t)$ solves
\begin{equation}\label{4.ns-lim}
\Dt u+B(u,u)=\nu A u+h(t),\ \divv u=0,\ \ t\in\R
\end{equation}
and, therefore, $u\in\Cal K_h$. Finally, we have the weak convergence
\begin{equation}\label{4.weak}
u_n(0)\rightharpoondown u(0),\ \ \text{ in } \ H
\end{equation}
and the theorem will be proved if we check that the convergence in \eqref{4.weak} is actually {\it strong}. To this end, we will use the so-called energy method. Indeed, the energy equality for the solution $u_n$ reads
\begin{equation}\label{4.energy}
\frac d{dt}\|u_n\|^2_H+2\nu\|\Nx u_n\|^2_{L^2}=2(h_n,u_n).
\end{equation}
Here and below $(u,v)$ means the standard inner product in $L^2$. Introducing the artificial parameter $N>0$, we transform \eqref{4.energy} as follows
\begin{multline}
\frac d{dt}((t+1)\|u_n\|^2_H)+N(t+1)\|u_n\|^2_{H}+2\nu(t+1)\|\Nx u_n\|^2_{L^2}=\\=\|u_n\|^2_{H}+N(t+1)\|u_n\|^2_H+2(t+1)(h_n,u_n)
\end{multline}
and after the integration over $t\in[-1,0]$, we get
\begin{multline}\label{4.m-en}
\|u_n(0)\|^2_H+2\nu\int_{-1}^0e^{Ns}(s+1)\|\Nx u_n(s)\|^2_{L^2}\,ds=\\=\int_{-1}^0e^{Ns}(\|u_n(s)\|^2_H+N(s+1)\|u_n(s)\|^2_{H})\,ds+2\int_{-1}^0e^{Ns}(s+1)(h_n(s),u_n(s))\,ds.
\end{multline}
We want to pass to the limit $n\to\infty$ in \eqref{4.m-en}. Note that first and second terms in the left-hand side of \eqref{4.m-en} do not arise any problems since they are non-negatively definite quadratic forms and the weak limit there will not exceed $\liminf_{n\to\infty}$ of these terms. The first term in the right-hand side of \eqref{4.m-en} also arises no problems since we have the {\it strong} convergence $u_n\to u$ in $L^2((-1,0)\times\Omega)$. Thus, the main problem is to pass to the limit in the term involving the external forces $h_n$. Indeed, for this term, we only know that $u_n\rightharpoondown u$ in $L^2(-1,0;H^1_0(\Omega))$ and $h_n\rightharpoondown h$ in $L^2(-1,0;H^{-1}(\Omega))$, so the convergence of a product $(h_n,u_n)$ is not guaranteed.
\par
Instead of passing to the limit in this term, we use the assumption that $g$ is normal and show that it is {\it small} when $N$ is large. Indeed, this term can be estimated as follows:
\begin{multline}\label{4.small}
|\int_{-1}^0e^{Ns}(s+1)(h_n(s),u_n(s))\,ds|\le C_\eb\int_{-1}^0e^{2Ns}\|h_n(s)\|^2_{H^{-1}(\Omega)}\,ds+\\+\eb\int_{-1}^0\|u_n(s)\|^2_{H^1_0}\,ds\le
C_\eb \eb_N+C\eb\le C\eb_N,
\end{multline}
where the first term is small when $N$ is large due to the fact that $g, h_n$ are normal and Lemma \ref{Lem1.normalexp}, see also \eqref{2.hull-norm}, and the second term is small due to the choice of $\eb>0$ and boundedness of $u_n$. Thus, the term \eqref{4.small} is indeed small when the artificial parameter  $N$ is large. Then, passing to the limit $n\to\infty$ in \eqref{4.m-en}, we have
\begin{multline}\label{4.m-en-lim}
\limsup_{n\to\infty}\|u_n(0)\|^2_H+2\nu\int_{-1}^0e^{Ns}(s+1)\|\Nx u(s)\|^2_{L^2}\,ds\le\\\le\int_{-1}^0e^{Ns}(\|u(s)\|^2_H+N(s+1)\|u(s)\|^2_{H})\,ds+C\eb_N,
\end{multline}
where $\eb_N$ is small when $N$ is large. Writing now the analogue of \eqref{4.m-en} for the limit function $u$ and estimating the term containing $h$ analogously to \eqref{4.small}, we have
\begin{multline}\label{4.m-en-lim-lim}
\|u(0)\|^2_H+2\nu\int_{-1}^0e^{Ns}(s+1)\|\Nx u(s)\|^2_{L^2}\,ds\ge\\\ge\int_{-1}^0e^{Ns}(\|u(s)\|^2_H+N(s+1)\|u(s)\|^2_{H})\,ds-C\eb_N.
\end{multline}
Substructing \eqref{4.m-en-lim-lim} from \eqref{4.m-en-lim}, we get
\begin{equation}
\limsup_{n\to\infty}\|u_n(0)\|^2_H\le \|u(0)\|^2_H+2C\eb_N
\end{equation}
and, finally, passing to the limit $N\to\infty$, we arrive at
\begin{equation}\label{4.str}
\limsup_{n\to\infty}\|u_n(0)\|^2_H\le \|u(0)\|^2_H\le \liminf_{n\to\infty}\|u_n(0)\|_H^2,
\end{equation}
where the right inequality is due to the weak convergence $u_n(0)\rightharpoondown u(0)$. The inequalities \eqref{4.str} imply that
\begin{equation}
\lim_{n\to\infty}\|u_n(0)\|^2_H=\|u(0)\|^2_H
\end{equation}
which together with the weak convergence $u_n(0)\rightharpoondown u(0)$ implies the {\it strong} convergence $u_n(0)\to u(0)$ in $H$. Thus, the asymptotic compactness is verified and the theorem is proved.
\end{proof}
\begin{remark}\label{Rem4.w-normal} The proof given above can be easily extended to the case of weakly normal external forces. Indeed, in this case,
for every $\eb>0$, we may split the external forces $h_n$ on the functions $\varphi_n$ with finite dimensional range which can be assumed without loss of generality to be weakly convergent in $L^2_{loc}(\R,L^2(\Omega))$, so the convergence in the term containing $(\varphi_n,u_n)$ is straightforward and on the part $h_n-\varphi_n$  the integral
\begin{equation}
\int_{-1}^0e^{Ns}(s+1)(h_n(s)-\varphi_n(s),u_n(s))\,ds
\end{equation}
is {\it small} when $N\to\infty$ due to Lemma \ref{Lem1.w-normalexp}. Thus, passing to the limit $N\to\infty$ and then $\eb\to0$, we obtain \eqref{4.str} which gives the desired asymptotic compactness. Nevertheless, in order to illustrate how to work with space regular external forces, we give below the proof of asymptotic compactness for the particular case when $g$ is space regular.
\end{remark}

\begin{theorem}\label{Th4.ns-space}Let the above assumptions hold and let, in addition, the external force $g\in L^2_b(\R,H^{-1}(\Omega))$ be space regular.
Then, the weak uniform attractor $\Cal A$ in $H$ related with the family of Navier-Stokes problems \eqref{4.ns-hull} is actually strong uniform attractor.
\end{theorem}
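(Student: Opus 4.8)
The plan is to run the scheme of the proof of Theorem~\ref{Th4.ns-norm} with the same weighted energy identity, but to replace the normality argument that made the coupling term $\int_{-1}^0(s+1)(h_n,u_n)\,ds$ \emph{small} by a genuine passage to the limit in that term, which space regularity now permits. First I would fix sequences $h_n\in\Cal H(g)$, $\tau_n\to-\infty$ and a bounded sequence $u_{\tau_n}\in H$, set $u_n(t):=U_{h_n}(t,\tau_n)u_{\tau_n}$ (extended by $0$ for $t\le\tau_n$), and extract, exactly as in Theorem~\ref{Th4.ns-norm}, the limits $h_n\to h$ weakly in $L^2_{loc}(\R,H^{-1}(\Omega))$, $u_n\to u$ $\ast$-weakly in $L^\infty_{loc}(\R,H)$, weakly in $L^2_{loc}(\R,H^1_0(\Omega))$ and --- using the bound on $\Dt u_n$ and the standard compactness theorems --- \emph{strongly} in $L^2_{loc}(\R,H)$. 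As before, the limit $u$ is a bounded complete solution of \eqref{4.ns-lim}, so $u\in\Cal K_h$, and $u_n(0)\rightharpoondown u(0)$ in $H$.

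The new ingredient is Corollary~\ref{Cor2.space}, applied with $V=H^{-1}(\Omega)$: for every $\eb>0$, after passing to a further subsequence, there exist $\varphi_n\in L^2_b(\R,H^k(\Omega))$ (for all $k\in\Bbb N$) with $\varphi_n\to\varphi$ in $L^2_{loc,w}(\R,H^k(\Omega))$ and $\|h_n-\varphi_n\|_{L^2_b(\R,H^{-1})}\le\eb$, $\|h-\varphi\|_{L^2_b(\R,H^{-1})}\le\eb$. In particular $\varphi_n\rightharpoondown\varphi$ weakly in $L^2((-1,0),L^2(\Omega))$, while $u_n\to u$ \emph{strongly} in the same space; writing $\int_{-1}^0(s+1)(\varphi_n,u_n)\,ds$ as the inner product of $\sqrt{s+1}\,\varphi_n$ with $\sqrt{s+1}\,u_n$ in $L^2((-1,0)\times\Omega)$ --- a product of a weakly and a strongly convergent sequence --- one gets
\[
\int_{-1}^0(s+1)(\varphi_n,u_n)\,ds\longrightarrow\int_{-1}^0(s+1)(\varphi,u)\,ds .
\]
The remainders are controlled by Cauchy--Schwarz and the uniform bound of $u_n$ in $L^2_{loc}(\R,H^1_0(\Omega))$,
\[
\Big|\int_{-1}^0(s+1)(h_n-\varphi_n,u_n)\,ds\Big|\le\|h_n-\varphi_n\|_{L^2((-1,0),H^{-1})}\,\|u_n\|_{L^2((-1,0),H^1_0)}\le C\eb ,
\]
and analogously for $h-\varphi$ paired with $u$. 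Letting $\eb\to0$ (the choice of subsequence is immaterial since the limit is unique, or one argues by contradiction) therefore gives $\int_{-1}^0(s+1)(h_n,u_n)\,ds\to\int_{-1}^0(s+1)(h,u)\,ds$.

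With this convergence available I would integrate the energy identity
\[
\frac d{dt}\big((t+1)\|u_n\|_H^2\big)+2\nu(t+1)\|\Nx u_n\|^2_{L^2}=\|u_n\|_H^2+2(t+1)(h_n,u_n)
\]
over $[-1,0]$ and pass to the limsup as $n\to\infty$, using the strong $L^2_{loc}(\R,H)$-convergence for the term $\int_{-1}^0\|u_n\|_H^2\,ds$, the weak lower semicontinuity of $\int_{-1}^0(s+1)\|\Nx u_n\|^2_{L^2}\,ds$, and the convergence of the forcing term established above. Comparing the resulting inequality with the same identity written for the limit solution $u$ on $[-1,0]$ --- here one uses that in the two-dimensional case Navier--Stokes solutions satisfy the energy \emph{equality} --- yields $\limsup_{n\to\infty}\|u_n(0)\|_H^2\le\|u(0)\|_H^2$. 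Together with the reverse $\liminf$-inequality coming from $u_n(0)\rightharpoondown u(0)$, this gives $\|u_n(0)\|_H\to\|u(0)\|_H$, hence $u_n(0)\to u(0)$ strongly in $H$. This is the required asymptotic compactness, and Theorem~\ref{Th3.strong} completes the proof.

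As in the normal case, the crux --- and the only step requiring real care --- is the passage to the limit in the coupling term $(h_n,u_n)$, where only weak convergence of both factors is available a priori; space regularity resolves it because one may peel off a finite-dimensional-range (and, by Remark~\ref{Rem1.smooth}, smooth) piece $\varphi_n$, on which the \emph{strong} $L^2_{loc}(\R,H)$-convergence of $u_n$ can be exploited, while the small remainder is absorbed by the uniform $L^2_{loc}(\R,H^1_0)$-bound on $u_n$. The only remaining bookkeeping is the repeated extraction of subsequences (one for each $\eb$, handled by a diagonal or contradiction argument) and the verification that the limit $u$ is indeed a bounded complete trajectory, so that the energy equality may legitimately be written for it on $[-1,0]$.
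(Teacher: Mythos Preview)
Your proposal is correct and follows essentially the same route as the paper's proof: both drop the artificial parameter $N$, use the weighted energy identity on $[-1,0]$, invoke Corollary~\ref{Cor2.space} to replace $h_n$ by a smoother $\varphi_n$ so that the strong $L^2_{loc}(\R,H)$-convergence of $u_n$ handles the coupling term, and absorb the $C\eb$-remainder via the uniform $L^2(-1,0;H^1_0)$-bound before letting $\eb\to0$. The only cosmetic difference is that you first package the $\eb$-argument into a genuine convergence $\int_{-1}^0(s+1)(h_n,u_n)\,ds\to\int_{-1}^0(s+1)(h,u)\,ds$ (handling the repeated subsequence extraction by a uniqueness-of-limit/contradiction remark), whereas the paper carries the $\eb$-error through to the final inequality $\limsup\|u_n(0)\|_H^2\le\|u(0)\|_H^2+2C\eb$ and only then sends $\eb\to0$.
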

\begin{proof} The beginning of the proof of this theorem repeats word by word the proof of the previous theorem. The first difference appear at \eqref{4.m-en}, where we need not the artificial parameter $N$ and may just take $N=0$:
\begin{multline}\label{4.us-en}
\|u_n(0)\|^2_H+2\nu\int_{-1}^0(s+1)\|\Nx u_n(s)\|^2_{L^2}\,ds=\\=\int_{-1}^0\|u_n(s)\|^2_H\,ds+2\int_{-1}^0(s+1)(h_n(s),u_n(s))\,ds.
\end{multline}
In this identity, the term containing $h_n$ is no more small and should be treated in a different way using the fact that $g$ is space regular. Namely, according to Corollary \ref{Cor2.space}, for every $\eb>0$ there exists a sequence $\varphi_n\in L^2_b(\R,L^2(\Omega))$ such that $
\varphi_n\to\varphi$ weakly in $L^2_{loc}(\R,L^2(\Omega))$ and
\begin{equation}\label{4.eb}
\|h_n-\varphi_n\|_{L^2_b(\R,H^{-1}(\Omega))}+\|h-\varphi\|_{L^2_b(\R,H^{-1}(\Omega))}\le\eb.
\end{equation}
Thus,
\begin{multline}
|\int_{-1}^0(s+1)(h_n(s),u_n(s))\,ds-\int_{-1}^0(s+1)(\varphi_n(s),u_n(s))\,ds|\le\\\le C\|h_n-\varphi_n\|_{L^2_b(\R,H^{-1}(\Omega))}\|u_n\|_{L^2(-1,0;H^1_0(\Omega))}\le C\eb
\end{multline}
and the analogous estimate holds for the limit case $h$ and $u$ as well. Therefore, with accuracy $C\eb$ we may replace the external forcers $h_n$
by $\varphi_n$. On the other hand, we know that $u_n\to u$ {\it strongly} in $L^2(-1,0;L^2(\Omega))$, so
\begin{equation}\label{4.conv}
\int_{-1}^0(s+1)(\varphi_n(s),u_n(s))\,ds\to \int_{-1}^0(s+1)(\varphi(s),u(s))\,ds
\end{equation}
as $n\to\infty$. Passing now to the limit $n\to\infty$ in \eqref{4.us-en}, we get
\begin{multline}
\limsup_{n\to\infty}\|u_n(0)\|^2_H+2\nu\int_{-1}^0(s+1)\|\Nx u(s)\|^2_{L^2}\,ds\le\\\le\int_{-1}^0\|u(s)\|^2_H\,ds+2\int_{-1}^0(s+1)(\varphi(s),u(s))\,ds+C\eb.
\end{multline}
Writing now the analogue of \eqref{4.us-en} for the limit function $u$ and replacing $h$ by $\varphi$ in it, we have
\begin{multline}
\|u(0)\|^2_H+2\nu\int_{-1}^0(s+1)\|\Nx u(s)\|^2_{L^2}\,ds\ge\\\ge\int_{-1}^0\|u(s)\|^2_H\,ds+2\int_{-1}^0(s+1)(\varphi(s),u(s))\,ds-C\eb
\end{multline}
which gives
\begin{equation}
\limsup_{n\to\infty}\|u_n(0)\|^2_H\le \|u(0)\|^2_H+2C\eb.
\end{equation}
Since $\eb>0$ is arbitrary, passing to the limit $\eb\to0$, we get \eqref{4.str} which implies the strong convergence $u_n(0)\to u(0)$ in $H$, see the end of the proof of Theorem \ref{Th4.ns-norm}. Thus, the asymptotic compactness is proved and the theorem is also proved.
\end{proof}

\subsection{Damped wave equation} Our next example will be the so-called damped wave equation which is not parabolic and normality of the external forces is not enough to gain the asymptotic compactness. However, as we will see space regularity or time regularity of the external forces is sufficient for the desired compactness. The main method of verifying the asymptotic compactness will be again the energy method.
\par
Let us consider the following damped wave equation in a bounded domain $\Omega$ of $\R^3$ with the smooth boundary:
\begin{equation}\label{4.hyp}
\Dt^2 u+\gamma\Dt u-\Dx u+f(u)=g(t),\ u\big|_{\partial\Omega}=0,\ (u,\Dt u)\big|_{t=\tau}=(u_\tau,u_\tau').
\end{equation}
Here $\gamma>0$ is a fixed dissipation rate, $g(t)$ are given non-autonomous external forces which are assumed translation bounded in $L^2(\Omega)$:
\begin{equation}\label{4.h-ext}
g\in L^2_b(\R,L^2(\Omega))
\end{equation}
and, finally the non-linear interaction function $f$ is assumed to satisfy some natural dissipativity and growth restrictions, namely,
\begin{equation}\label{4.f}
1.\ f(u)u\ge -C;\ \ 2.\ \ |f''(u)|\le C(1+|u|).
\end{equation}
Note that according to the recent results of \cite{KSZ}, the cubic growth restriction can be relaxed and the {\it quintic} growth rate of $f$ is now considered as the critical one. However, in order to avoid the technicalities, we pose here the "old" cubic growth restrictions.
\par
It is well-known, see e.g., \cite{CVbook}, that under the above assumptions problem \eqref{4.hyp} is globally well-posed in the energy phase space
\begin{equation}\label{4.h-energy}
E:=H^1_0(\Omega)\times L^2(\Omega),\ \ \xi_u(t):=(u(t),\Dt u(t))\in E,\ \ t\ge\tau.
\end{equation}
Thus, for every $\xi_\tau\in E$, there exists a unique solution $u(t)$, $t\ge\tau$, satisfying
\begin{equation}\label{4.h-def}
\xi_u\in C(\tau,T;E),\ \ \text{for any}\ \ T\ge\tau,\ \ \xi_u(\tau)=\xi_\tau.
\end{equation}
Moreover, this solution possesses the following dissipative estimate:
\begin{equation}\label{4.h-dis}
\|\xi_u(t)\|_E\le Q(\|\xi_\tau\|_E)e^{-\beta(t-\tau)}+Q(\|g\|_{L^2_b(\R,L^2(\Omega))}),
\end{equation}
where the positive constant $\beta$ and monotone function $Q$ are independent of $\tau\in\R$, $t\ge\tau$ and $u$, see e.g., \cite{CVbook}. As before, following the general scheme of Vishik and Chepyzhov, we consider a family of damped wave equations with the external forces belonging to the hull $\Cal H(g)$ of the initial right-hand side $g(t)$:
\begin{equation}\label{4.hyp-hull}
\Dt^2 u+\gamma\Dt u-\Dx u+f(u)=h(t),\ u\big|_{\partial\Omega}=0,\ \xi_u\big|_{t=\tau}=\xi_\tau,\ h\in\Cal H(g).
\end{equation}
Then, a family of dynamical processes $U_h(t,\tau): E\to E$, $h\in\Cal H(g)$, is well-defined. Moreover, according to estimate \eqref{4.h-dis} this family is uniformly dissipative. The weak continuity is also straightforward here and, therefore, due to Theorem \ref{Th3.wa}, there exist a weak uniform attractor $\Cal A\subset E$ for the family of processes generated by \eqref{4.hyp-hull} and this attractor obeys \eqref{3.str}, see \cite{CVbook} for a more detailed exposition.
\par
Thus, as in the previous example, the existence of a weak uniform attractor is well-known and straightforward here and our task is to verify that it is actually a {\it strong} uniform attractor (based on Theorem \ref{Th3.strong}) under some extra assumptions on the external force $g(t)$. We start with the case when $g$ is time regular.

\begin{theorem}\label{Th4.hyp-time}Let the above assumptions hold and let, in addition, the external force $g\in L^2_b(\R,L^2(\Omega))$ be time regular.
Then, the weak uniform attractor $\Cal A$ in $E$ related with the family of damped wave  problems \eqref{4.hyp-hull} is actually strong uniform attractor.
\end{theorem}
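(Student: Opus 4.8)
The plan is to verify the asymptotic compactness of the family $U_h(t,\tau):E\to E$ and then invoke Theorem \ref{Th3.strong}. Following Remark \ref{Rem3.as-comp}, I fix a bounded sequence $\xi_{\tau_n}\in E$, external forces $h_n\in\Cal H(g)$, and times $\tau_n\to-\infty$, and must show that $\xi_{u_n}(0)=U_{h_n}(0,\tau_n)\xi_{\tau_n}$ is precompact in $E=H^1_0\times L^2$. By the dissipative estimate \eqref{4.h-dis}, the solutions $u_n$ (extended by the stationary solution, or simply considered for $t\ge\tau_n$, eventually covering any fixed interval $[-T,0]$) are bounded in $L^\infty_{loc}(\R,E)$; hence up to a subsequence $h_n\to h$ weakly in $L^2_{loc}(\R,L^2(\Omega))$ with $h\in\Cal H(g)$, $\xi_{u_n}\to\xi_u$ weak-star in $L^\infty_{loc}(\R,E)$, and by the standard compactness argument (estimating $\Dt^2 u_n$ and using that $H^1_0\subset\subset L^2$) we get $u_n\to u$ strongly in $C_{loc}(\R,L^2(\Omega))$ and $\Dt u_n\to\Dt u$ strongly in $L^2_{loc}(\R,H^{-1})$; this also handles the subcritical nonlinearity so that the limit $u$ solves \eqref{4.hyp-hull} with the limit force $h$, i.e. $u\in\Cal K_h$. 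It remains to upgrade the weak convergence $\xi_{u_n}(0)\rightharpoondown\xi_u(0)$ in $E$ to a strong one, which by the usual Hilbert-space argument reduces to showing $\limsup_n\|\xi_{u_n}(0)\|^2_E\le\|\xi_u(0)\|^2_E$.

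For this I would use the energy method adapted to the damped wave equation. Multiplying \eqref{4.hyp-hull} by $\Dt u_n$ (plus a small multiple of $u_n$ to get a genuinely dissipative energy functional $\mathcal{E}_n(t)$ equivalent to $\|\xi_{u_n}(t)\|_E^2$ up to the lower-order $\int F(u_n)$ term, where $F'=f$), one derives an identity of the form $\frac{d}{dt}\mathcal{E}_n + \kappa\,\mathcal{E}_n = H_n(t) + (h_n,\Dt u_n + \alpha u_n)$, with $H_n$ collecting lower-order terms that are continuous under the strong $L^2_{loc}$ convergence of $u_n$. Multiplying by a suitable weight (e.g. $e^{\kappa s}(s+1)$ as in the Navier–Stokes proof, or simply $e^{\kappa s}$) and integrating over $[-1,0]$ gives an expression for $\mathcal{E}_n(0)$ in which the quadratic principal part is weakly lower semicontinuous, the lower-order terms pass to the limit by strong convergence, and everything is under control \emph{except} the term involving the external force, $\int_{-1}^0 w(s)(h_n(s),\Dt u_n(s)+\alpha u_n(s))\,ds$. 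Since we only have $h_n\rightharpoondown h$ weakly in $L^2_{loc}(\R,L^2)$ and $\Dt u_n\rightharpoondown\Dt u$ weakly in $L^2_{loc}(\R,L^2)$, the product does not pass to the limit directly; normality of $g$ does not help here (and indeed Section \ref{s5} shows it cannot), so this is where time regularity enters and this is the main obstacle.

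To overcome it I would invoke Corollary \ref{Cor2.time}: since $g$ is time regular, for any $\eb>0$ we can find $\varphi_n\in H^k_b(\R,L^2(\Omega))$ (say $k=1$ suffices) with $\varphi_n\to\varphi$ in $H^1_{loc,w}(\R,L^2(\Omega))$ and $\|h_n-\varphi_n\|_{L^2_b}+\|h-\varphi\|_{L^2_b}\le\eb$. Replacing $h_n$ by $\varphi_n$ costs only $C\eb$ (using boundedness of $\Dt u_n$ in $L^2_{loc}$). For the $\varphi_n$-term the trick is to integrate by parts in time, moving the time derivative off $\Dt u_n$ and onto $\varphi_n$: $\int w\,(\varphi_n,\Dt u_n) = [\text{boundary terms at }s=0,-1] - \int (w\varphi_n)'\,u_n\cdot(\dots) - \int w(\varphi_n, \dots)$, i.e. after integration by parts all surviving integrands involve $u_n$ itself (not $\Dt u_n$) paired against $\varphi_n$ or $\Dt\varphi_n$, both of which converge strongly enough in $L^2_{loc}$ — $u_n\to u$ strongly in $C_{loc}(\R,L^2)$ and $\varphi_n\to\varphi$ weakly in $H^1_{loc}$ — so the products pass to the limit; the boundary term at $s=0$ is $w(0)(\varphi_n(0),\Dt u_n(0))$, which I would absorb by arranging the weight $w$ to vanish at the endpoint $0$ (choosing $w(s)=s+1$ vanishing at $s=-1$ is the wrong end, so instead use a weight vanishing at $0$, e.g. $w(s)=-s\,e^{\kappa s}$ on $[-1,0]$, or treat $\xi_{u_n}(\theta_n)$ on a shrinking interval), and the boundary term at $s=-1$ involves $\Dt u_n(-1)$ which is merely weakly convergent, so I would instead integrate over $[-T,0]$ with a weight vanishing at both ends, or average the starting time, a standard manoeuvre. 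Then, writing the analogous energy identity for the limit $u$, subtracting, letting first $n\to\infty$ and then $\eb\to0$, one arrives at $\limsup_n\|\xi_{u_n}(0)\|^2_E\le\|\xi_u(0)\|^2_E$ (modulo the lower-order $F$-terms, which match in the limit by strong $L^2$ convergence and the subcritical growth of $f$), hence the strong convergence $\xi_{u_n}(0)\to\xi_u(0)$ in $E$. This establishes the asymptotic compactness and, by Theorem \ref{Th3.strong}, that $\Cal A$ is a strong uniform attractor.
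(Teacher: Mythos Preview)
Your strategy is exactly the paper's: verify asymptotic compactness by the energy method, handle the critical term $\int(h_n,\Dt u_n)$ by approximating $h_n$ with $\varphi_n\in H^1_b(\R,L^2)$ via Corollary~\ref{Cor2.time}, and integrate by parts in time to trade $\Dt u_n$ for $\Dt\varphi_n$. However, you have miscomputed the boundary term after integration by parts, and this sends you on an unnecessary detour.

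Integrating $\int w(s)(\varphi_n(s),\Dt u_n(s))\,ds$ by parts produces the boundary contribution $w(0)(\varphi_n(0),u_n(0))$, \emph{not} $w(0)(\varphi_n(0),\Dt u_n(0))$ as you wrote. This term is harmless: $u_n(0)\to u(0)$ strongly in $L^2(\Omega)$ (from the strong convergence in $C_{loc}(\R,L^2)$ you already established) and $\varphi_n(0)\rightharpoondown\varphi(0)$ weakly in $L^2(\Omega)$ (the $H^1_{loc,w}$ convergence of $\varphi_n$ gives pointwise-in-time weak convergence via the Sobolev embedding $H^1\subset C$). So there is no need to kill the weight at $s=0$, and all the contortions with weights vanishing at both ends or averaging the start time are superfluous. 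The paper also avoids your difficulty with the left endpoint by integrating over $(\tau_n,0)$ rather than $[-1,0]$: with the natural weight $e^{\gamma s}$ (coming from $\frac{d}{dt}E+\gamma E=\dots$), the initial-data term $e^{\gamma\tau_n}E(u_n(\tau_n))$ and the left boundary term $e^{\gamma\tau_n}(\varphi_n(\tau_n),u_n(\tau_n))$ both vanish as $\tau_n\to-\infty$, so no weight vanishing at the left end is needed either.
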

\begin{proof} As in the case of Neavier-Stokes equations, we only need to check the asymptotic compactness. Let $\tau_n\to-\infty$, $h_n\in\Cal H(g)$, $\xi_{\tau_n}\in E$ be bounded in $E$ and let $\xi_{u_n}(t):= U_{h_n}(t,\tau_n)\xi_{\tau_n}$ be the corresponding solutions. Then, the functions $u_n(t)$ solve
\begin{equation}\label{4.h-xun}
\Dt^2 u_n+\gamma\Dt u_n-\Dx u_n+f(u_n)=h_n(t),\ \ t\ge\tau_n,\ \ \xi_{u_n}\big|_{t=\tau_n}=\xi_{\tau_n}.
\end{equation}
Due to the energy estimate \eqref{4.h-dis}, the sequence $u_n$ is bounded in $L^\infty(\R,E)$ (we again extend $\xi_{u_n}$ by zero for $t\le\tau_n$). Thus, without loss of generality, we may assume that $u_n\to u$ weakly-star in $L^\infty_{loc,w}(\R,E)$. Moreover, estimating the second time derivative from equations \eqref{4.h-xun} and using the proper compactness theorem, we get the {\it strong} convergence
\begin{equation}\label{40.h-st-conv}
u_n\to u,\ \ \text{strongly in}\  \ C_{loc}(\R,E_{-\delta}),\ \delta>0,
\end{equation}
where $E_{-\delta}:=H^{1-\delta}_0(\Omega)\cap H^{-\delta}(\Omega)$, see \cite{CVbook} for more details.
\par
Furthermore, without loss of generality, we may assume that the external forces $h_n\rightharpoondown h$ in $L^2_{loc}(\R,L^2(\Omega))$.  Passing now to the limit $n\to\infty$ in equations \eqref{4.h-xun} (the strong convergence \eqref{40.h-st-conv} together with the growth restriction \eqref{4.f} allows us to pass to the limit in  the non-linear term $f(u_n)$ in a straightforward way), we see that the limit function $u\in \Cal K_{h}$ solves
\begin{equation}\label{4.h-un-lim}
\Dt^2 u+\gamma\Dt u-\Dx u+f(u)=h(t),\ \ t\in\R.
\end{equation}
Finally, from the above convergences, we also have
\begin{equation}
\xi_{u_n}(0)\rightharpoondown \xi_u(0) \ \ \text{in}\ \ E
\end{equation}
and to finish the proof of the theorem, we only need to show that the convergence is actually {\it strong}. To this end
we will again use the energy method.
\par
Remind that the energy equality for \eqref{4.h-xun} reads
\begin{equation}\label{4.h-en1}
\frac d{dt}\(\|\xi_{u_n}\|_E^2+2(F(u_n),1)\)+2\gamma\|\Dt u\|^2_{L^2}=2(h_n,\Dt u_n),
\end{equation}
where $F(v)=\int_0^vf(u)\,du$. Moreover, multiplying \eqref{4.h-xun} by $u$ and inegrating over $x$, we have
\begin{equation}\label{4.h-en2}
\frac d{dt}\((u_n,\Dt u_n)+\frac\gamma2\|u_n\|^2\)+\|\Nx u_n\|^2_{L^2}-\|\Dt u_n\|^2_{L^2}+(f(u_n),u_n)=(h_n,u_n).
\end{equation}
Multiplying \eqref{4.h-en2} by $\gamma$ and taking a sum with \eqref{4.h-en1}, we end up with
\begin{multline}\label{4.h-en3}
\frac{d}{dt}\(\|\xi_{u_n}\|^2_E+2(F(u_n),1)+\gamma(u_n,\Dt u_n)+\frac{\gamma^2}2\|u_n\|^2_{L^2}\)+\gamma\|\xi_{u_n}\|^2_E+\\+\gamma(f(u_n),u_n)=(h_n,2\Dt u_n+\gamma u_n).
\end{multline}
We rewrite this identity in the form
\begin{equation}\label{4.en-4}
\frac d{dt} E(u_n(t))+\gamma E(u_n(t))+\gamma G(u_n)=(h_n,2\Dt u_n+\gamma u_n),
\end{equation}
where $E(u_n):=\|\xi_{u_n}\|^2_E+2(F(u_n),1)+\gamma(u_n,\Dt u_n)+\frac{\gamma^2}2\|u_n\|^2_{L^2}$ and $G(u_n):=(f(u_n),u_n)-2(F(u_n),1)-\gamma(u_n,\Dt u_n)-\frac{\gamma^2}2\|u_n\|^2_{L^2}$. Integrating the last equality over $t\in(\tau_n,0)$, we finally get
\begin{multline}\label{4.en-5}
\|\xi_{u_n}(0)\|^2_E+2(F(u_n(0)),1)+\gamma(u_n(0),\Dt u_n(0))+\frac{\gamma^2}2\|u_n(0)\|^2_{L^2}+\\+\gamma\int_{\tau_n}^0e^{\gamma s}G(u_n(s))\,ds=
E(u_{n}(\tau_n))e^{\gamma\tau_n}+\int_{\tau_n}^0e^{\gamma s}(h_n(s),2\Dt u_n(s)+\gamma u_n(s))\,ds.
\end{multline}
We intend to pass to the limit $n\to\infty$ in this identity. To this end, we note that due to uniform boundedness of $\xi_{u_n}\in C_b(\tau_n,0;E)$ and {\it strong} convergence \eqref{40.h-st-conv}, passing to the limit in all terms of \eqref{4.en-5} except of the one containing $(h_n,\Dt u_n)$ is straightforward. Thus, we only need to show that
\begin{equation}\label{4.h-hconv}
\int_{\tau_n}^0e^{\gamma s}(h_n(s),\Dt u_n(s))\,ds\to\int_{-\infty}^0e^{\gamma s}(h(s),\Dt u(s))\,ds.
\end{equation}
To verify \eqref{4.h-hconv}, we utilize the fact that the external force $g\in L^2_b(\R,L^2(\Omega))$ is time regular. Then, according to Corollary \ref{Cor2.time}, for any $\eb>0$, there exists a sequence $\varphi_n\in H^1_b(\R,L^2(\Omega))$ which is uniformly bounded in this space such that $\varphi_n\rightharpoondown\varphi$ in $H^1_{loc}(\R,L^2(\Omega))$ and
\begin{equation}\label{4.h-app}
\|h_n-\varphi_n\|_{L^2_b(\R,L^2(\Omega))}+\|h-\varphi\|_{L^2_b(\R,L^2(\Omega))}\le \eb.
\end{equation}
This, inequality together with uniform boundedness $\Dt u_n\in C_b(\tau_n,0;L^2(\Omega))$,implies that
\begin{multline}\label{4.h-app1}
|\int_{\tau_n}^0e^{\gamma s}(h_n(s),\Dt u_n(s))\,ds-\int_{\tau_n}^0e^{\gamma s}(\varphi_n(s),\Dt u_n(s))\,ds|+\\+|\int_{-\infty}^0e^{\gamma s}(h(s),\Dt u(s))\,ds-\int_{-\infty}^0e^{\gamma s}(\varphi(s),\Dt u(s))\,ds|\le C\eb.
\end{multline}
Thus, with accuracy $C\eb$, we may replace $h_n$ by $\varphi_n$. Since $\varphi_n\in H^1_b(\R,L^2(\Omega))$, we may integrate by parts
\begin{multline}
\int_{\tau_n}^0e^{\gamma s}(\varphi_n(s),\Dt u_n(s))\,ds=-\int_{\tau_n}^0(\Dt \varphi_n(s),u_n(s))\,ds-\\-\gamma\int_{\tau_n}^0e^{\gamma s}(\varphi_n(s),u_n(s))\,ds+(\varphi_n(0),u_n(0))-e^{\gamma\tau_n}(\varphi_n(\tau_n),u_n(\tau_n)).
\end{multline}
Using now the fact that $\varphi_n\to\varphi$ weakly in $H^1_{loc}(\R,L^2(\Omega))$ and, particularly, $\varphi_n(0)\rightharpoondown \varphi(0)$ together with the strong convergence \eqref{40.h-st-conv}, we see that
\begin{equation}
\int_{\tau_n}^0e^{\gamma s}(\varphi_n(s),\Dt u_n(s))\,ds\to \int_{-\infty}^0e^{\gamma s}(\varphi(s),\Dt u(s))\,ds
\end{equation}
as $n\to\infty$. Since $\eb>0$ is arbitrary, this convergence, together with estimate \eqref{4.h-app1} implies the desired convergence \eqref{4.h-hconv}. Passing now to the limit $n\to\infty$ in \eqref{4.en-5}, we end up with
\begin{multline}\label{4.en-6}
\limsup_{n\to\infty}\|\xi_{u_n}(0)\|^2_E+2(F(u(0)),1)+\gamma(u(0),\Dt u(0))+\frac{\gamma^2}2\|u(0)\|^2_{L^2}+\\+\gamma\int_{-\infty}^0e^{\gamma s}G(u(s))\,ds\le\int_{-\infty}^0e^{\gamma s}(h(s),2\Dt u(s)+\gamma u(s))\,ds.
\end{multline}
Writing out now the analogue of \eqref{4.en-5} for the limit function $u$, we have
\begin{multline}\label{4.en-7}
\|\xi_{u}(0)\|^2_E+2(F(u(0)),1)+\gamma(u(0),\Dt u(0))+\frac{\gamma^2}2\|u(0)\|^2_{L^2}+\\+\gamma\int_{-\infty}^0e^{\gamma s}G(u(s))\,ds=\int_{-\infty}^0e^{\gamma s}(h(s),2\Dt u(s)+\gamma u(s))\,ds.
\end{multline}
Thus,
\begin{equation}\label{4.h-fincon}
\limsup_{n\to\infty}\|\xi_{u_n}(0)\|^2_E\le \|\xi_u(0)\|^2_E\le\liminf_{n\to\infty}\|\xi_{u_n}(0)\|^2_E,
\end{equation}
where the right inequality comes from the weak convergence $\xi_{u_n}(0)\to\xi_{u}(0)$ in $E$. Inequalities \eqref{4.h-fincon} implly that
$\lim_{n\to\infty}\|\xi_{u_n}(0)\|^2_E=\|\xi_u(0)\|^2_E$ which together with the weak convergence imply the desired {\it strong} convergence $\xi_{u_n}(0)\to\xi_u(0)$ in $E$. Thus, the asymptotic compactness is proved and the theorem is also proved.
\end{proof}
We now state the analogous result for the case when the external forces are {\it space} regular.
\begin{theorem}\label{Th4.hyp-space}Let the above assumptions hold and let, in addition, the external force $g\in L^2_b(\R,L^2(\Omega))$ be space regular.
Then, the weak uniform attractor $\Cal A$ in $E$ related with the family of damped wave  problems \eqref{4.hyp-hull} is actually strong uniform attractor.
\end{theorem}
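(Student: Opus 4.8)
The plan is to follow the proof of Theorem \ref{Th4.hyp-time} essentially verbatim, replacing the use of time regularity (and the integration by parts in time) by the use of space regularity (and a weak--strong duality pairing in a negative Sobolev space). By Theorem \ref{Th3.strong} and Remark \ref{Rem3.as-comp} it suffices to check asymptotic compactness, so I would take $\tau_n\to-\infty$, $h_n\in\Cal H(g)$, a bounded sequence $\xi_{\tau_n}\in E$, set $\xi_{u_n}(t):=U_{h_n}(t,\tau_n)\xi_{\tau_n}$ (extended by zero for $t\le\tau_n$), and repeat the extraction of limits from the previous proof: $u_n\to u$ weakly-star in $L^\infty_{loc}(\R,E)$ and strongly in $C_{loc}(\R,E_{-\delta})$, $h_n\rightharpoondown h\in\Cal H(g)$ in $L^2_{loc}(\R,L^2(\Omega))$, the limit $u\in\Cal K_h$ solving \eqref{4.h-un-lim}, and $\xi_{u_n}(0)\rightharpoondown\xi_u(0)$ in $E$. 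Everything then reduces to upgrading this last convergence to a strong one through the integrated energy identity \eqref{4.en-5}.

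In \eqref{4.en-5} every term except the one containing the external forces, namely $\int_{\tau_n}^0 e^{\gamma s}(h_n(s),2\Dt u_n(s)+\gamma u_n(s))\,ds$, passes to the limit exactly as in Theorem \ref{Th4.hyp-time}, using the strong convergence \eqref{40.h-st-conv}, the growth condition \eqref{4.f}, the uniform bounds on $\xi_{u_n}$, the weight $e^{\gamma s}$ to kill the tail at $-\infty$, and $E(u_n(\tau_n))e^{\gamma\tau_n}\to0$. For the force term I would invoke Corollary \ref{Cor2.space} with $V=L^2(\Omega)$: for each $\eb>0$ it produces (after passing to a subsequence) a sequence $\varphi_n\in L^2_b(\R,H^1(\Omega))$, uniformly bounded there, with $\varphi_n\rightharpoondown\varphi$ in $L^2_{loc}(\R,H^1(\Omega))$ and $\|h_n-\varphi_n\|_{L^2_b(\R,L^2(\Omega))}+\|h-\varphi\|_{L^2_b(\R,L^2(\Omega))}\le\eb$. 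Since $2\Dt u_n+\gamma u_n$ is uniformly bounded in $L^\infty(\R,L^2(\Omega))$, Cauchy--Schwarz on unit intervals together with the geometric decay of $e^{\gamma s}$ shows that replacing $h_n$ by $\varphi_n$ (and $h$ by $\varphi$) in the integral costs only $C\eb$.

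The one genuinely new step --- and the place where space regularity rather than time regularity does the work --- is the passage to the limit in $\int_{\tau_n}^0 e^{\gamma s}(\varphi_n,2\Dt u_n+\gamma u_n)\,ds$. In contrast to the time-regular case I would not integrate by parts in $t$; instead, fixing $\delta>0$ as in \eqref{40.h-st-conv}, I would note that $\varphi_n$ is uniformly bounded and weakly convergent in $L^2_{loc}(\R,H^{\delta}(\Omega))$, while $2\Dt u_n+\gamma u_n\to 2\Dt u+\gamma u$ strongly in $L^2_{loc}(\R,H^{-\delta}(\Omega))$ (directly from \eqref{40.h-st-conv} and the embedding $H^{1-\delta}\hookrightarrow H^{-\delta}$); since the $H^{\delta}$--$H^{-\delta}$ duality extends the $L^2$ inner product, the integral over any fixed finite window $(-M,0)$ converges to the corresponding integral for $\varphi$, $u$, and the tails over $(-\infty,-M)$ are uniformly small thanks to the exponential weight and the uniform bounds. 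Letting $\eb\to0$ afterwards then gives the convergence of the full $h_n$-term. Finally, comparing the limit of \eqref{4.en-5} with the same identity written for $u$, exactly as in \eqref{4.en-6}--\eqref{4.h-fincon}, yields $\limsup_n\|\xi_{u_n}(0)\|_E^2\le\|\xi_u(0)\|_E^2\le\liminf_n\|\xi_{u_n}(0)\|_E^2$, hence $\xi_{u_n}(0)\to\xi_u(0)$ strongly in $E$, which is the desired asymptotic compactness. I expect the only real obstacle to be, as in all these proofs, the external-force term; but here it is in fact milder than in Theorem \ref{Th4.hyp-time}, since the spatial smoothing of the approximants makes the weak--strong pairing against $\Dt u_n$ immediate and dispenses with the integration by parts in time.
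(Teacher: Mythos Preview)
Your proposal is correct and follows essentially the same route as the paper. The paper also repeats the proof of Theorem~\ref{Th4.hyp-time} word for word, invokes Corollary~\ref{Cor2.space} to obtain approximants $\varphi_n$ bounded in $L^2_b(\R,H^1(\Omega))$ with $\varphi_n\rightharpoondown\varphi$, replaces $h_n$ by $\varphi_n$ at cost $C\eb$, and then exploits the spatial smoothing via the pairing $((-\Dx)^{\delta}\varphi_n,(-\Dx)^{-\delta}\Dt u_n)$ together with the strong convergence $\Dt u_n\to\Dt u$ in $C_{loc}(\R,H^{-2\delta}(\Omega))$; this is exactly your $H^{\delta}$--$H^{-\delta}$ weak--strong duality argument, written in slightly different notation.
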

\begin{proof} The proof of this theorem repeats word by word the proof of the previous one. The only difference is that the convergence \eqref{4.h-hconv} should be now proved in a bit different way using that $g$ is space regular. Namely, according to Corollary \ref{Cor2.space}, for any $\eb>0$, there exist fuctions $\varphi_n\in L^2_b(\R,H^1(\Omega))$ (uniformly bounded in this space) such that $\varphi_n\to\varphi$ weakly in $L^2_{loc}(\R,H^1(\Omega))$ such that estimate \eqref{4.h-app} holds. Then, exactly as in the proof of the previous theorem, estimate \eqref{4.h-app1} also holds, so we may replace $h_n$ by $\varphi_n$. Since $\varphi_n$ is more regular in space, we may write for sufficiently small $\delta>0$
\begin{multline}
\int_{\tau_n}^0(\varphi_n(s),\Dt u_n(s))\,ds=\int_{\tau_n}^0((-\Dx)^\delta \varphi_n(s),(-\Dx)^{-\delta}\Dt u_n(s))\,ds\to\\\to\int_{-\infty}^0((-\Dx)^\delta \varphi(s),(-\Dx)^{-\delta}\Dt u(s))\,ds=\int_{-\infty}^0(\varphi(s),\Dt u(s))\,ds
\end{multline}
as $n\to\infty$. Here we have used that $\Dt u_n\to\Dt u$ {\it strongly} in $C_{loc}(\R,H^{-2\delta}(\Omega))$ and $\varphi_n\to\varphi$ weakly in
$L^2_{loc}(\R,H^{2\delta}(\Omega))$. Since $\eb>0$ is arbitrary, this prove the convergence \eqref{4.h-hconv}. The rest of the proof of the theorem coincides with the proof of the previous theorem.
\end{proof}

\subsection{Reaction-diffusion equation in unbounded domain} Our last example will be related with reaction-diffusion system in $\Omega=\R^N$ in the class of finite energy solutions. As not difficult to see, time regularity of the external forces is not enough to get strong uniform attractor, e.g. since time regularity does not exclude external forces in the form of travelling waves $g(t,x)=g(t-\alpha x)$ for which the uniform attractor cannot exist. However, as we will see, the {\it space} regularity of the external forces is still enough to have a strong uniform attractor.
\par
Let us consider the following reaction-diffusion system in $\R^N$:
\begin{equation}\label{4.r-eq}
\Dt u=a\Dx u-\alpha u-f(u)+g(t),\ \ u\big|_{t=\tau}=u_\tau.
\end{equation}
Here $u=(u^1,\cdots, u^k)$ is an unknown vector-valued function $a$ is a given diffusion matrix which satisfies $a+a^*>0$, $\alpha>0$ is a given parameter, $g(t)$ are the non-autonomous external forces which are translation bounded in $H^{-1}(\Omega)$:
\begin{equation}\label{4.r-h}
g\in L^2_b(\R,H^{-1}(\Omega)).
\end{equation}
Finally, the nonlinear interaction function $f$ is assumed to satisfy the following assumptions:
\begin{equation}\label{4.r-f}
\begin{cases}
1.\ \ f\in C^1(\R^k,\R^k);\\
2.\ \ f(u).u\ge \beta |u|^p;\\
3.\ \ f'(u)\ge -K;\\
4.\ \ |f(u)|\le C(1+|u|^{p-1})
\end{cases}
\end{equation}
for some positive $\beta$ and $p>1$.
\par
It is well-known, see \cite{MZ} and references therein that under the above assumptions problem \eqref{4.r-eq} is globally well-posed in the phase space $L^2(\R^N)$, i.e., for every $u_\tau\in L^2(\R^N)$ there exists a unique solution $u(t)$, $t\ge\tau$ belonging to the class
\begin{equation}\label{4.r-sol}
u\in C(\tau,T;L^2(\R^N))\cap L^2(\tau,T;H^1_0(\R^N))\cap L^p(\tau,T;L^p(\R^N)),\ \ \forall T\ge\tau.
\end{equation}
Important for us is the fact that this solution satisfies the energy {\it identity}
\begin{equation}\label{4.r-en}
\frac12 \frac d{dt}\|u\|^2_{L^2}+\alpha\|u\|^2_{L^2}+(a\Nx u,\Nx u)+(f(u),u)=(g,u).
\end{equation}
In particular, this energy identity together with our assumptions on $f$ and $a$ gives the dissipative energy estimate:
\begin{multline}\label{4.r-dis}
\|u(t)\|^2_{L^2}+\int_{t}^{t+1}\|u(s)\|^2_{H^1}\,ds+\int_{t}^{t+1}\|u(s)\|^p_{L^p}\,ds\le\\\le Ce^{-\gamma (t-\tau)}\|u_\tau\|_{L^2}^2+C\|g\|^2_{L^2_b(\R,H^{-1}(\R^N))}.
\end{multline}
Following the general procedure, we consider the family of problems of type \eqref{4.r-eq}
\begin{equation}\label{4.r-eq1}
\Dt u=a\Dx u-\alpha u-f(u)+h(t),\ \ u\big|_{t=\tau}=u_\tau,\ \ h\in\Cal H(g).
\end{equation}
Then, the family of dynamical processes $U_h(t,\tau):L^2(\R^N)\to L^2(\R^N)$, $h\in\Cal H(g)$. related with problems \eqref{4.r-eq1} is well-defined.
Moreover, due to estimate \eqref{4.r-dis}, this family is uniformly dissipative. As usual, weak continuity of this processes can be verified in a straightforward way, so we left the rigorous proof of it to the reader.
\par
Thus, due to Theorem \ref{Th3.wa}, the family of processes $U_h(t,\tau):L^2(\R^N)\to L^2(\R^N)$, $h\in\Cal H(g)$, associated with problems \eqref{4.r-eq1} possess a weak uniform attractor $\Cal A$ in $L^2(\R^N)$. Our next task, as usual, to verify that under some extra assumptions on the external forces $g$ this attractor is strong.

\begin{theorem}\label{Th4.r-space}Let the above assumptions hold and let, in addition, the external force $g\in L^2_b(\R,H^{-1}(\R^N))$ be space regular.
Then, the weak uniform attractor $\Cal A$ in $L^2(\R^N)$ related with the family of damped wave  problems \eqref{4.r-eq1} is actually strong uniform attractor.
\end{theorem}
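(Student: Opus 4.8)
The plan is to follow the energy-method scheme already used for the Navier--Stokes and damped wave equations in Theorems \ref{Th4.ns-space} and \ref{Th4.hyp-space}, adapting it to the unbounded domain $\Omega=\R^N$. By Theorem \ref{Th3.strong} it suffices to verify the asymptotic compactness in the reduced form of Remark \ref{Rem3.as-comp}: given $\tau_n\to-\infty$, $h_n\in\Cal H(g)$ and $u_{\tau_n}$ bounded in $L^2(\R^N)$, one must show that $u_n(0):=U_{h_n}(0,\tau_n)u_{\tau_n}$ is precompact in $L^2(\R^N)$. From the dissipative estimate \eqref{4.r-dis} the solutions $u_n$ are bounded in $L^\infty_{loc}(\R,L^2)\cap L^2_{loc}(\R,H^1)\cap L^p_{loc}(\R,L^p)$; estimating $\Dt u_n$ from \eqref{4.r-eq1} and applying a standard Aubin--Lions argument on balls $B_R\subset\R^N$ (where $H^1(B_R)\subset\subset L^2(B_R)$), we may pass to a subsequence so that $u_n\to u$ weakly-star in $L^\infty_{loc}(\R,L^2)$, weakly in $L^2_{loc}(\R,H^1)$ and \emph{strongly} in $L^2_{loc}(\R,L^2_{loc}(\R^N))$ (hence a.e.), $h_n\rightharpoondown h\in\Cal H(g)$ in $L^2_{loc}(\R,H^{-1})$, and the limit $u$ solves \eqref{4.r-eq1} with forcing $h$, so $u\in\Cal K_h$; moreover $u_n(0)\rightharpoondown u(0)$ in $L^2(\R^N)$. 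It then remains to upgrade this weak convergence to a strong one, for which it is enough to prove $\limsup_n\|u_n(0)\|^2_{L^2}\le\|u(0)\|^2_{L^2}$.

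To that end I would use the energy identity \eqref{4.r-en}, multiplied by $2e^{2\alpha t}$ rather than weighted by $(t+1)$ as in the Navier--Stokes case. This choice is dictated by the only structural obstruction of the unbounded domain: there is no global strong $L^2$-convergence of $u_n$, so the zero-order term $\int\|u_n\|^2$ that appeared on the right-hand sides of the weighted energy identities in Theorems \ref{Th4.ns-space} and \ref{Th4.hyp-space} cannot be passed to the limit on $\R^N$; tuning the exponential weight to the damping coefficient $\alpha$ cancels the term $2\alpha\|u_n\|^2$ exactly. Integrating the resulting identity over $[\tau_n,0]$ and letting $\tau_n\to-\infty$ (so that $e^{2\alpha\tau_n}\|u_{\tau_n}\|^2\to0$, exactly as in the proof of Theorem \ref{Th4.hyp-time}), we arrive at $\|u_n(0)\|^2_{L^2}+2\int_{-\infty}^0 e^{2\alpha s}\big[(a\Nx u_n,\Nx u_n)+(f(u_n),u_n)\big]\,ds=2\int_{-\infty}^0 e^{2\alpha s}(h_n,u_n)\,ds+o(1)$. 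On the left-hand side $(a\Nx u_n,\Nx u_n)=\tfrac12((a+a^*)\Nx u_n,\Nx u_n)$ is a nonnegative quadratic form and hence weakly lower semicontinuous, while $(f(u_n),u_n)$ is nonnegative by \eqref{4.r-f} and, combined with the a.e. convergence $u_n\to u$, is handled by Fatou's lemma; writing the analogous identity for the limit $u$ and subtracting then yields $\limsup_n\|u_n(0)\|^2_{L^2}\le\|u(0)\|^2_{L^2}$ --- provided one can pass to the limit in the forcing term $\int_{-\infty}^0 e^{2\alpha s}(h_n,u_n)\,ds$.

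Passing to the limit in this forcing term is the main obstacle and is exactly where space regularity is used. By Corollary \ref{Cor2.space} together with Remark \ref{Rem1.smooth} --- after an arbitrarily small perturbation replacing the finite-dimensional approximating subspace by one spanned by compactly supported smooth functions, which is possible since $C_0^\infty(\R^N)$ is dense in $H^{-1}(\R^N)$ --- we may, for any $\eb>0$, approximate $h_n$ by functions $\varphi_n\in L^2_b(\R,V_\eb)$ with $V_\eb$ a fixed finite-dimensional subspace of $C_0^\infty(\R^N)$, with $\varphi_n\rightharpoondown\varphi$ weakly in $L^2_{loc}(\R,L^2)$ and $\|h_n-\varphi_n\|_{L^2_b(\R,H^{-1})}+\|h-\varphi\|_{L^2_b(\R,H^{-1})}\le\eb$. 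Since $\int_{-\infty}^0 e^{2\alpha s}\|u_n(s)\|^2_{H^1}\,ds$ is bounded uniformly in $n$ (sum \eqref{4.r-dis} against the geometric weight), replacing $h_n$ by $\varphi_n$ in the $H^{-1}$--$H^1$ duality pairing costs only $C\eb$, and likewise for the limit. Because every $\varphi_n(s)$ is supported in one fixed ball $B_{R_0}$, the pairing $(\varphi_n(s),u_n(s))$ only sees $u_n$ on $B_{R_0}$, where the convergence is strong; so weak$\times$strong convergence on $(\tau,0)\times B_{R_0}$ plus control of the time-tail by the weight $e^{2\alpha s}$ give $\int_{-\infty}^0 e^{2\alpha s}(\varphi_n,u_n)\,ds\to\int_{-\infty}^0 e^{2\alpha s}(\varphi,u)\,ds$. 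Hence $\limsup_n\|u_n(0)\|^2_{L^2}\le\|u(0)\|^2_{L^2}+C\eb$ for every $\eb>0$, so $\limsup_n\|u_n(0)\|^2_{L^2}\le\|u(0)\|^2_{L^2}$; combined with $u_n(0)\rightharpoondown u(0)$ this forces $\|u_n(0)\|_{L^2}\to\|u(0)\|_{L^2}$ and therefore strong convergence in the Hilbert space $L^2(\R^N)$, which is the required asymptotic compactness. I expect the only genuinely delicate points to be the justification that compactly supported approximants are admissible and that the resulting local-in-space strong convergence suffices to close the forcing term; the zero-order term is disposed of cheaply by the exponential weight, and the nonlinearity by the sign condition in \eqref{4.r-f} together with Fatou's lemma.
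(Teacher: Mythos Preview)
Your proposal is correct and follows essentially the same approach as the paper: the exponential weight $e^{2\alpha s}$ in the integrated energy identity, weak lower semicontinuity for the diffusion term, Fatou's lemma for the nonnegative nonlinear term, and the space-regularity approximation for the forcing term are exactly as in the paper's proof.

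The one point of (minor) divergence is the localisation argument for the forcing term. The paper does not perturb the finite-dimensional subspace $V_\eb$ to one spanned by $C_0^\infty$ functions; instead it keeps $V_\eb\subset L^2(\R^N)$ as given and observes that any fixed basis $\theta_1,\dots,\theta_l\in L^2(\R^N)$ automatically satisfies the tail estimate $\|\theta_l\|_{L^2(|x|>R)}\to0$, which --- by norm equivalence on the finite-dimensional $V_\eb$ --- upgrades to a \emph{uniform} tail estimate $\sup_n\|\varphi_n\|_{L^2_b(\R,L^2(|x|>R))}\to0$ as $R\to\infty$. This reduces the convergence $(\varphi_n,u_n)\to(\varphi,u)$ to a fixed ball, where $u_n\to u$ strongly, just as in your argument. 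Your compact-support perturbation achieves the same localisation and is perfectly legitimate (density of $C_0^\infty$ in $H^{-1}$ plus norm equivalence on $V_\eb$ controls the coefficients), but the paper's route is slightly cleaner in that it avoids the extra perturbation step and uses only the built-in $L^2$ tail decay.
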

\begin{proof} As before, we will use the energy method to verify the asymptotic compactness. Let $\tau_n\to-\infty$, $h_n\in\Cal H(g)$, and $u_{\tau_n}$ be a bounded sequence in $L^2(\R^N)$. Let also $u_n(t):=U_{h_n}(t,\tau_n)u_{\tau_n}$ be the corresponding solutions:
\begin{equation}\label{4.r-eq2}
\Dt u_n=a\Dx u_n-\alpha u_n-f(u_n)+h_n(t),\ \ u\big|_{t=\tau_n}=u_{\tau_n},\ \ t\ge\tau_n.
\end{equation}
First, we need to pass to the limit $n\to\infty$ in these equations. To this end, we note that, due to the energy estimate, the sequence $u_n$ is bounded in $L^\infty(\R,L^2(\R^N))\cap L^2_b(\R,H^1(\R^N))\cap L^p_b(\R,L^p(\R^N))$. Thus, without loss of generality, we may assume that
\begin{multline}\label{4.r-wconv}
u_n\to u\  \text{ weakly in }\ L^2_{loc}(\R,H^1(\R^N))\cap L^p_{loc}(\R,L^p(\R^N))\\ \text{ and weakly-star  in }\ L^\infty_{loc}(\R,L^2(\R^N))
\end{multline}
and $u_n(0)\rightharpoondown u(0)$ in $L^2(\R^N)$.
Moreover, estimating $\Dt u_n$ from the equation, we see that $\Dt u_n$ is bounded in $L^2_b(\R,H^{-1}(\R^N))+L^q_b(\R,L^q(\R^N))$ where $\frac1p+\frac1q=1$. However, in contrast to the case of bounded domains, this {\it does not} give the strong convergence in $L^2_{loc}(\R,L^2(\R^N))$ since the embedding $H^1(\R^N)\subset L^2(\R^N)$ is not compact, but only the strong convergence $u_n\to$ in the space $L^2_{loc}(\R,L^2_{loc}(\R^N))$ and, in particular, $u_n\to u$ almost everywhere. Finally, without loss of generality, we may assume that $h_n\to h\in\Cal H(g)$ weakly in $L^2_{loc}(\R,H^{-1}(\R^N))$. The established convergence is enough to pass to the limit $n\to\infty$ in equations \eqref{4.r-eq2} in a standard way and establish that the limit function $u\in\Cal K_h$ and solves
\begin{equation}\label{4.r-eq3}
\Dt u=a\Dx u-\alpha u-f(u)+h(t),\  \ t\in\R.
\end{equation}
Following the general procedure, we now write the energy equality for solutions $u_n(t)$ in the following integrated form:
\begin{multline}\label{4.r-inten}
\|u_n(0)\|^2_{L^2}+2\int_{\tau_n}^0e^{2\alpha s}(a\Nx u_n(s),\Nx u_n(s))\,ds+\\+2\int_{\tau_n}^0e^{2\alpha s}(f(u_n(s)),u_n(s))\,ds=\|u_{\tau_n}\|^2_{L^2}e^{2\alpha\tau_n}+2\int_{\tau_n}^0e^{2\alpha s}(h_n(s),u_n(s))\,ds.
\end{multline}

Our aim is to pass to the limit $n\to\infty$ in this energy equality. To this end, we note that, since $a$ is positive definite,
\begin{equation}\label{4.r-weak}
\int_{-\infty}^0e^{2\alpha s}(a\Nx u(s),\Nx u(s))\,ds\le \liminf_{n\to\infty}\int_{\tau_n}^0e^{2\alpha s}(a\Nx u_n(s),\Nx u_n(s))\,ds.
\end{equation}
Furthermore, using that $f(u_n)u_n\ge0$, the convergence $u_n\to u$ almost everywhere and the Fatou lemma, we get
\begin{equation}\label{4.r-weak1}
\int_{-\infty}^0e^{2\alpha s}(f(u(s)),u(s))\,ds\le \liminf_{n\to\infty}\int_{\tau_n}^0e^{2\alpha s}(f( u_n(s)), u_n(s))\,ds.
\end{equation}
Thus, it remains to pass to the limit $n\to\infty$ in the term containing $h_n$ and to prove that
\begin{equation}\label{4.r-weak2}
\int_{\tau_n}^0e^{2\alpha s}(h_n(s),u_n(s))\,ds\to\int_{-\infty}^0e^{2\alpha s}(h(s), u(s))\,ds
\end{equation}
as $n\to\infty$. To this end, we need to use the assumption that the external force $g\in L^2_b(\R,H^{-1}(\Omega))$ is space regular. Then, due to Corollary \ref{Cor2.space}, for every $\eb>0$ there exist $\varphi_n$ uniformly bounded in $L^2_b(\R,V_\eb)$ where $V_\eb\subset L^2(\R^N)$ is a finite-dimensional subspace of $H^{-1}(\R^N)$ such that $\varphi_n\to\varphi$ weakly in $L^2_{loc}(\R,L^2(\R^N))$ and
\begin{equation}\label{4.r-est1}
\|h_n-\varphi_n\|_{L^2_b(\R,H^{-1}(\R^N))}+\|h-\varphi\|_{L^2_b(\R,H^{-1}(\R^N)}\le\eb.
\end{equation}
This estimate shows that to prove \eqref{4.r-weak2}, it is sufficient to verify that
\begin{equation}\label{4.r-weak3}
\int_{\tau_n}^0e^{2\alpha s}(\varphi_n(s),u_n(s))\,ds\to\int_{-\infty}^0e^{2\alpha s}(\varphi(s), u(s))\,ds.
\end{equation}
However, in contrast to the case of bounded domains, the weak convergence $\varphi_n\to\varphi$ in $L^2_{loc}(\R,L^2(\R^N))$ is not sufficient to prove \eqref{4.r-weak3} since we have the strong convergence $u_n\to u$ only in $L^2_{loc}(\R,L^2_{loc}(\R^n))$ (and not in $L^2_{loc}(\R,L^2(\R^N))$). So, we need to utilize in addition the fact that the range of $\varphi_n$ belongd to the {\it finite-dimensional} space $V_\eb\subset L^2(\R^N)$. Let $\theta_1,\cdots, \theta_l\in L^2(\R^N)$ be a base in $V_\eb$. Then, on the one hand,
\begin{equation}\label{4.r-norm}
\text{for every } \varphi\in V_\eb,\ \ \varphi=\sum_{i=1}^l\xi_l\theta_l(x),\ \ \|\varphi\|_{L^2}\sim |\xi|.
\end{equation}
On the other hand, every $\theta_l$ satisfies the so-called tail estimate
\begin{equation}\label{4.r.tail}
\lim_{R\to\infty}\|\theta_l\|_{L^2(|x|>R)}=0.
\end{equation}
Since $V_\eb$ is finite dimensional, this give the so-called {\it uniform} tail estimate for the functions $\varphi_n$:
\begin{equation}\label{4.r.utail}
\lim_{R\to\infty}\|\varphi_n\|_{L^2_b(\R,L^2(|x|>R))}=0.
\end{equation}
Due to this tail estimate, the verification of the convergence \eqref{4.r-weak3} is reduced to the following one
\begin{equation}\label{4.r-weak4}
\int_{\tau_n}^0e^{2\alpha s}(\varphi_n(s),u_n(s))_{L^2(|x|\le R)}\,ds\to\int_{-\infty}^0e^{2\alpha s}(\varphi(s), u(s))_{L^2(|x|\le R)}\,ds
\end{equation}
for any $R>0$. But this convergence is obvious since $\varphi_n\rightharpoondown\varphi$  in $L^2_{loc}(\R,L^2(|x|\le R))$ and $u_n\to u$ strongly in $L^2_{loc}(\R,L^2(|x|\le R))$. Thus the desired convergence \eqref{4.r-weak2} is verified. Passing now to the limit $n\to\infty$ in energy equality
\eqref{4.r-inten} and using \eqref{4.r-weak2} together with \eqref{4.r-weak} and \eqref{4.r-weak1}, we have
\begin{multline}\label{4.r-inten1}
\limsup_{n\to\infty}\|u_n(0)\|^2_{L^2}+2\int_{-\infty}^0e^{2\alpha s}(a\Nx u(s),\Nx u(s))\,ds+\\+2\int_{-\infty}^0e^{2\alpha s}(f(u(s)),u(s))\,ds\le 2\int_{\tau_n}^0e^{2\alpha s}(h(s),u(s))\,ds.
\end{multline}
Writing down the analogue of energy equality \eqref{4.r-inten} for the limit function $u$, we have
\begin{multline}\label{4.r-inten2}
\|u(0)\|^2_{L^2}+2\int_{-\infty}^0e^{2\alpha s}(a\Nx u(s),\Nx u(s))\,ds+\\+2\int_{-\infty}^0e^{2\alpha s}(f(u(s)),u(s))\,ds= 2\int_{\tau_n}^0e^{2\alpha s}(h(s),u(s))\,ds
\end{multline}
and, therefore,
\begin{equation}
\limsup_{n\to\infty}\|u_n(0)\|^2_{L^2(\R^N)}\le \|u(0)\|^2_{L^2(\R^N)}\le \liminf_{n\to\infty}\|u_n(0)\|^2_{L^2(\R^N)}.
\end{equation}
Thus, $\lim_{n\to\infty}\|u_n(0)\|^2_{L^2(\R^N)}=\|u(0)\|^2_{L^2(\R^N)}$ which together with the weak convergence $u_n(0)\rightharpoondown u(0)$ in $L^2(\R^N)$ implies the desired strong convergence $u_n(0)\to u(0)$ in $L^2(\R^n)$ which proves the asymptotic compactness and finishes the proof of the theorem.
\end{proof}
\begin{remark}\label{Rem4.norm} As we have already pointed out, the normality of the external forces $g$ is not sufficient to verify the existence of a strong attractor in the case of unbounded domains. The main obstruction for that is the absence of the uniform tail estimate for the solutions $u$. However, if we assume, in addition, that the external forces $g$ possess the uniform tail estimate in the form
\begin{equation}\label{4.rtail}
\lim_{R\to\infty}\|g\|_{L^2_b(\R,H^{-1}(|x|>R))}=0,
\end{equation}
then the standard weighted energy estimates show that there is the analogous uniform tail estimate on the weak attractor $\Cal A$, see \cite{MZ} and references therein:
\begin{equation}
\lim_{R\to\infty}\sup_{h\in\Cal H(g)}\sup_{u\in\Cal K_h}\|u\|_{L^2_b(\R,H^1(|x|>R))}=0.
\end{equation}
Then, if $g$ is normal, we can obtain the desired asymptotic compactness via the energy method using the trick with the artificial parameter $N$, described in Theorem \ref{Th4.ns-norm}. Thus, normality of the external forces $g$ plus the uniform tail estimate \eqref{4.rtail} is enough to gain the asymptotic compactness and verify the existence of a strong uniform attractor.
\end{remark}
\section{Counterexamples}\label{s5} In this section, we give several examples showing that some  assumptions on the external forces $g$ which look natural and similar to the ones introduced before  are nevertheless {\it insufficient} to gain the asymptotic compactness. We start with the case where $g$ is only translation bounded.
\begin{example}\label{Ex5.par} Let us consider the 1D linear heat equation on $\Omega:=(-\pi,\pi)$ endowed by the Dirichlet boundary conditions
\begin{equation}\label{5.linear-heat}
\Dt u-\partial_x^2 u=g(t), \ \ u(0)=0,
\end{equation}
where the external force $g$ is given by the following explicit formula:
\begin{equation}\label{5.g}
g(t)=
\begin{cases}
n\sin(nx),\ \ t\in[n,n+\frac1{n^2}],\ n\in\Bbb N;\\
0,\ \ \text{otherwise.}
\end{cases}
\end{equation}
Then, on the one hand, the function $g$ is translation bounded in $L^2(\Omega)$: $g\in L^2_b(\R,L^2(\Omega))$, so we have the weak uniform attractor
for the linear problem \eqref{5.linear-heat} in $H^1_0(\Omega)$. On the other hand,
 the explicit computation shows that
\begin{equation}\label{5.bad}
u_n(n+\frac1{n^2})=\frac{1-e^{-1}}{n},
\end{equation}
where $u(t)=\sum_{n=1}^\infty u_n(t)\sin(nx)$. Identity \eqref{5.bad} shows that the sequence $u(n+\frac1{n^2})$, $n\in\Bbb N$, cannot be precompact in $H^1_0(\Omega)$, so the asymptotic compactness fails and equation \eqref{5.linear-heat} does not possess a strong uniform attractor in $H^1_0(\Omega)$.
\end{example}
\begin{remark}\label{Rem5.space-normal} The function $g$ defined by \eqref{5.g} is actually more regular, namely,
\begin{equation}\label{5.g-reg}
g\in L^2_b(\R,L^\infty(\Omega)).
\end{equation}
This example shows that an attempt to introduce the so-called (strongly) space normal external forces as ones which can be approximated by the functions from $L^p_b(\R,L^\infty(\Omega))$ (analogously to the (time) strongly normal external forces introduced above) fails since such functions do not give the desired asymptotic compactness, as explained  in Example \ref{Ex5.par}. In particular, this example shows that the class of the so-called {\it spatially absolutely continuous} external forces introduced in \cite{XZS} is {\it insufficient} to get the asymptotic compactness even in the case of simplest 1D linear heat equation.
\end{remark}
Next example will be related with damped wave equations.
\begin{example}\label{Ex5.hyp} Let us consider the following linera damped wave equation on a segment $\Omega=(-\pi,\pi)$ endowed by Dirichlet boundary conditions:
\begin{equation}\label{5.hyp}
\Dt^2 u+\Dt u-\partial_x^2 u=g(t), \ \ u(0)=u'(0)=0,
\end{equation}
where the right-hand side $g$ possesses the explicit description:
\begin{equation}\label{5.g-hyp}
g(t)=
\begin{cases}
\cos(nt)\sin(nx),\ \ t\in[3n\pi,3(n+1)\pi)],\ n\in\Bbb N;\\
0,\ \ \text{otherwise.}
\end{cases}
\end{equation}
Clearly, $g\in L^2_b(\R,L^2(\Omega))$ and the linear equation \eqref{5.hyp} possesses a weak uniform attractor in the energy space $E:=H^1_0(\Omega)\times L^2(\Omega)$. We claim that the trajectory $u(t)$ defined by \eqref{5.hyp} is not precompact in $E$ and, therefore, the strong uniform attractor does not exist. Indeed, if we split the solution $u(t)$ into the Fourier basis:
\begin{equation}\label{5.f}
u(t,x)=\sum_{n=1}^\infty u_n(t)\sin(nx),
\end{equation}
then the explicit computations gives
\begin{equation}\label{5.gn-hyp}
u_n(t)=-\frac{2e^{-(t-3n\pi)/2}\sin\(\frac12\sqrt{4n^2-1}(t-3n\pi)\)}{\sqrt{4n^2-1}}+\frac{\sin(n(t-3n\pi))}n,
\end{equation}
for $t\in[3n,3(n+1))$. Let us fix $t_n:=\pi(3n+2+\frac1{2n})$. Then,
\begin{equation}\label{5.hbad}
u_n(t_n)\ge \frac1n-\frac{2e^{-\pi}}{\sqrt{4n^2-1}}\ge \frac1n\(1-\frac{2e^{-\pi}}{\sqrt{3}}\).
\end{equation}
Estimate \eqref{5.hbad} shows that the sequence $\{u(t_n)\}_{n=1}^\infty$ is not precompact in $H^1_0(\Omega)$. Thus, the asymptotic compactness fails and the strong attractor does not exist.
\end{example}
\begin{remark}\label{Rem5.hyp} The external force $g$ defined by \eqref{5.g-hyp} possesses the additional regularity, namely
\begin{equation}\label{5.g-hreg}
g\in L^\infty(\R,L^\infty(\Omega)).
\end{equation}
In particular, this external force is normal and even strongly normal, so in contrast to the case of parabolic equations, the normality of the exernal forces is not sufficient to obtain the asymptotic compactness in the class of damped wave equations.
\end{remark}
Our next example is related with unbounded domains.
\begin{example}\label{Ex5.unb} Let us consider the following linear heat equation on the whole line $\Omega=\R$:
\begin{equation}\label{5.unbounded}
\Dt u+\alpha u-\partial_x^2 u=g(t),\ \ u(0)=u_0,\ \ \alpha>0.
\end{equation}
To define $g$, we introduce a bump function $V\in C_0^\infty(\R)$ such that $V(0)=0$, $V$ is not zero identically and set $u(t,x)=V(x-t)$,
$u_0(x):=V(x)$, $g(t,x)= g_0(x-t)$, $g_0(x):=-V'(x)+\alpha V(x)-V''(x)$. Then, equation \eqref{5.unbounded} is satisfied with this choice of $u$ and $g$. Moreover,
\begin{equation}\label{5.g-good}
g\in C^\infty_b(\R,C^\infty_b(\R))
\end{equation}
and, in particular it is normal and time regular. However, for a travelling wave solution $u(t,x)=V(x-t)$, the orbit $u(t)$ is not precompact in any Sobolev space $H^s(\R)$, $s\in\R$. Thus, the asymptotic compactness fails and strong attractor does not exist. This example shows that only the smoothness of the external forces $g$ is not sufficient to get the asymptotic compactness and some kind of uniform tail estimates are requred from $g$, see also Remark \ref{Rem4.norm}. This gives another reason why the result of \cite{XZS} is wrong and spatially absolutely continous external forces are {\it insufficient} for the asymptotic compactness in the case of unbounded domains.
\end{example}

We conclude our exposition by one more example which shows that the translation boundedness is {\it not necessary} for the existence of a strong
attractor.
\begin{example}\label{Ex5.unbounded} Let us consider again the linear heat equation on the segment $\Omega=(-\pi,\pi)$ endowed by Dirichlet boundary conditions:
\begin{equation}\label{5.heatagain}
\Dt u-\partial_x u=g(t),\  u\big|_{t=\tau}=u_\tau
\end{equation}
and take the external force in the form
\begin{equation}
g(t)=\sin(x)\begin{cases} 0,\ \ t\le0,\\ t\sin(e^t),\ \  t\ge0.
\end{cases}
\end{equation}
Then, due to averaging of rapid oscillations in time, all trajectories of \eqref{5.heatagain} are uniformly bounded in any $H^s(\Omega)$, $s\in\R$,
so the strong attractor exists (in any phase space $H^s(\Omega)$). However, $g(t)$ is unbounded in time and cannot be translation bounded.
\end{example}


\begin{thebibliography}{999}
\bibitem{AnQ}
 C. Anh and N. Quang. {\it Uniform attractors for nonautonomous parabolic equations involving weighted p-Laplacian operators,} Ann. Polon. Math.  98 , no. 3, (2010),  251--271.
\bibitem{BV} A. Babin and M. Vishik. {\it Attractors of evolutionary equations,} North Holland, Amsterdam, 1992.
\bibitem{ball}
J. Ball.
{\it Global attractors for damped semilinear wave equations,}
Discrete Contin. Dyn. Syst.  10,  no. 1-2,  (2004),  31--52.
\bibitem{CLR}
 A. Carvalho, J Langa, and J Robinson. {\it Attractors for infinite-dimensional non-autonomous dynamical systems.} Applied Mathematical Sciences, 182. Springer, New York, 2013.
\bibitem{Chep13} V. Chepyzhov. {\it On uniform attractors of dynamic processes and nonautonomous equations of mathematical physics,} Russian Math. Surveys  68, no. 2,  (2013), 349--382.
\bibitem{CVbook}
 V. Chepyzhov and M. Vishik. {\it Attractors for equations of mathematical physics,} American Mathematical Society Colloquium Publications, 49. American Mathematical Society, Providence, RI, 2002.
\bibitem{CF}
H. Crauel and F. Flandoli. {\it Attractors for random dynamical systems,} Probab. Theory Related Fields  100,  no. 3, (1994),  365--393.
\bibitem{CV94}
V. Chepyzhov and M. Vishik. {\it Attractors of nonautonomous dynamical systems and their dimension,} J. Math. Pures
Appl. 73 (1994), 279--333.
\bibitem{CV95}
V. Chepyzhov and M. Vishik. {\it Attractors of non-autonomous evolution equations with translation-compact symbols,}  Partial differential operators and mathematical physics (Holzhau, 1994),  49–60, Oper. Theory Adv. Appl., 78, Birkhäuser, Basel, 1995.
\bibitem{EMZ}
 M. Efendiev, S. Zelik and A. Miranville. {\it Exponential attractors and finite-dimensional reduction for non-autonomous dynamical systems,} Proc. Roy. Soc. Edinburgh Sect. A  135, no. 4,   (2005)  703--730.
\bibitem{KSZ}
V. Kalantarov, A. Savostianov and S. Zelik, {\it  Attractors for damped quintic wave equations in bounded domains,} submitted.
\bibitem{LWZ}
S. Lu,  H. Wu and C. Zhong.
{\it Attractors for nonautonomous 2D Navier-Stokes equations with normal external forces,}
Discrete Contin. Dyn. Syst.  13, no. 3 (2005),  701--719.
\bibitem{Lu}
 S. Lu. {\it Attractors for nonautonomous 2D Navier-Stokes equations with less regular normal forces,} J. Differential Equations  230, no. 1  (2006), 196--212.
\bibitem{Lu1}
 S. Lu. {\it Attractors for nonautonomous reaction-diffusion systems with symbols without strong translation compactness,} Asymptot. Anal.  54,  no. 3-4 (2007),   197--210.
\bibitem{MZS}
 S. Ma, C. Zhong and H. Song. {\it  Attractors for nonautonomous 2D Navier-Stokes equations with less regular symbols,} Nonlinear Anal.  71, no. 9, (2009),  4215--4222.
\bibitem{MCL}
S. Ma, X. Cheng and H. Li. {\it  Attractors for non-autonomous wave equations with a new class of external forces,} J. Math. Anal. Appl.  337,  no. 2,  (2008),   808--820.
\bibitem{MZh}
S. Ma and C. Zhong. {\it The attractors for weakly damped non-autonomous hyperbolic equations with a new class of external forces,} Discrete Contin. Dyn. Syst.  18,  no. 1,  (2007) 53--70.
\bibitem{MZ}
 A. Miranville and S. Zelik. {\it Attractors for dissipative partial differential equations in bounded and unbounded domains,}  Handbook of differential equations: evolutionary equations. Vol. IV,  103--200, Handb. Differ. Equ., Elsevier/North-Holland, Amsterdam, 2008.
\bibitem{MoRosa}
I. Moise,  R. Rosa and X. Wang. {\it Attractors for non-compact semigroups via energy equations,} Nonlinearity  11  (1998),  no. 5, 1369--1393.
\bibitem{Tem}
 R. Temam. {\it Infinite-dimensional dynamical systems in mechanics and physics,} Second edition. Applied Mathematical Sciences, 68. Springer-Verlag, New York, 1997.
\bibitem{RR}
A. Robertson and W. Robertson. {\it Topological vector spaces,} Reprint of the second edition. Cambridge Tracts in Mathematics, 53. Cambridge University Press, Cambridge-New York, 1980.
\bibitem{XZS}
Y. Xie,  K. Zhu and C. Sun. {\it The existence of uniform attractors for non-autonomous reaction-diffusion equations on the whole space,} J. Math. Phys.  53, no. 8,  (2012),  082703, 11 pp.
\end{thebibliography}
\end{document}